\newcommand{ \inj}{ \hookrightarrow}
\newcommand{ \surj}{ \twoheadrightarrow}
\newcommand{ \Hom}{ \mathrm{Hom}}
\newcommand{\R}{\mathbb{R}}
\newcommand{\Z}{\mathbb{Z}}
\newcommand{\F}{\mathbb{F}_2}
\newcommand{\mf}{\underline{\F}}
\newcommand{\mz}{\underline{\Z}}
\newcommand{\ste}{\mathcal{A}}
\newcommand{\Ab}{\mathcal{A}b}
\newcommand{\ab}{\mathcal{B}}
\newcommand{\sh}{\mathcal{SH}}
\newcommand{\Sp}{\mathcal{S}p}
\theoremstyle{definition}
\newtheorem{de}{Definition}[section]
\newtheorem{nota}[de]{Notation}
\newtheorem{hypo}[de]{Hypothesis}
\theoremstyle{plain}
\newtheorem{thm}[de]{Theorem}
\newtheorem{lemma}[de]{Lemma}
\newtheorem{pro}[de]{Proposition}
\newtheorem{corr}[de]{Corollary}
\newtheorem*{thm*}{Theorem}
\newtheorem*{lemma*}{Lemma}
\newtheorem*{pro*}{Proposition}
\newtheorem*{corr*}{Corollary}
\theoremstyle{remark}
\newtheorem{rk}[de]{Remark}
\newtheorem{ex}[de]{Example}
\title{Subalgebras of the $\Z/2$-equivariant Steenrod algebra}
\author{Nicolas Ricka}
\email{ricka@math.univ-paris13.fr}
\address{Institut de Recherche Mathématique Avancée \\
 UMR 7501 \\
 7 rue René-Descartes \\
 67084 Strasbourg, \\
 France}
\subjclass{55S10, 55S91}
\keywords{cohomology operations, Hopf algebroids, equivariant Steenrod algebra}
\begin{document}

\begin{abstract}
The aim of this paper is to study sub-algebras of the $\Z/2$-equivariant Steenrod algebra (for cohomology with coefficients in the constant Mackey functor $\mf$) which come from quotient Hopf algebroids of the $\Z/2$-equivariant dual Steenrod algebra. In particular, we study the equivariant counterpart of profile functions, exhibit the equivariant analogues of the classical $\ste(n)$ and $\mathcal{E}(n)$ and show that the Steenrod algebra is free as a module over these.
\end{abstract}

\maketitle

\section{Introduction}

It is a truism to say that the classical modulo 2 Steenrod algebra and its dual are powerful tools to study homology in particular and homotopy theory in general. In \cite{AM74}, Adams and Margolis have shown that sub-Hopf algebras of the modulo $p$ Steenrod algebra, or dually quotient Hopf algebras of the dual modulo $p$ Steenrod algebra have a very particular form. This allows an explicit study of the structure of the Steenrod algebra via so called {\em profile functions}.
 
These facts have both a profound and meaningful consequences in stable homotopy theory and concrete amazing applications (see \cite{Mar83} for many beautiful results strongly related to the structure of the Steenrod algebra).

Using the determination of the structure of the $\Z/2$-equivariant dual Steenrod algebra as a $RO(\Z/2)$-graded Hopf algebroid by Hu and Kriz in \cite{HK01}, we show two kind of results:
\begin{enumerate}
\item one that parallels \cite{AM74}, characterizing families of sub-Hopf algebroids of the $\Z/2$-equivariant dual Steenrod algebra, showing how to adapt classical methods to overcome the difficulties inherent to the equivariant world
\item and answers to some natural questions, arising from the essential differences between the non-equivariant and $\Z/2$-equivariant stable cohomological operations with coefficients in $\F$ and $\mf$ respectively.
\end{enumerate}

Our goal is to study sub-objects of the $\Z/2$-equivariant Steenrod algebra via the quotients of its dual, so we start by showing how duality theory {\em à la Boardman} fits into the context of $\Z/2$-equivariant stable cohomology operations in Section \ref{sec:Operations}.  When considering such equivariant generalisations, one must be careful since, for an equivariant cohomology theory $E^{\star}$, freeness of stable $E^{\star}$-cohomological operations as a module over the coefficient ring $E^{\star}_{\Z/2}$ (the $RO(\Z/2)$-graded abelian group of $\Z/2$-equivariant maps $S^{\star} \rightarrow E$) is not sufficient to apply Boardman's theory. The main result is Proposition \ref{pro_dualite_operationcoop}, which states that the appropriate notion in $\Z/2$-equivariant stable homotopy theory is freeness as a $RO(\Z/2)$-graded Mackey functor for $\Z/2$.

 We then show how this theory applies to the $\Z/2$-equivariant dual Steenrod algebra in Section \ref{sec:Steenrod}. The essential Proposition is \ref{corr_hypolibertehfd}, which asserts that the appropriate freeness condition is satisfied.

These results motivate our study of quotient Hopf algebroids of the $\Z/2$-equivariant Steenrod algebra. We develop in Section \ref{sec:Quotient} the required theoretical tools. After dealing with quotients of $RO(\Z/2)$-graded Hopf algebroids in general, we deal with the  cofreeness properties of various quotients of a given $RO(\Z/2)$-graded Hopf algebroid  in Theorem \ref{resultat_coliberte}.
The following section is devoted to the application of these ideas to the particular case of the $\Z/2$-equivariant dual Steenrod algebra. We define the appropriate notion of profile function in the equivariant case in Definition \ref{defhk} and deal with three questions:
\begin{enumerate}
\item Is there uniqueness of a pair of profile functions providing the same algebra quotient, and if yes, is there a preferred choice of profile functions?
\item Under what conditions on a pair of profile functions is the associated quotient algebra a quotient Hopf algebroid?
\item Under what additional conditions is this quotient Hopf algebroid free over $(H\mf_{\star})_{\Z/2}$?
\end{enumerate}

These questions leads respectively to the notions of {\em minimal pair}, the well-known notion of Hopf ideal, and {\em free pair}.

We end the paper by exhibiting in Section \ref{sec:Examples} some particular sub Hopf algebroids corresponding to the non-equivariant $\ste(n)$ and $\mathcal{E}(n)$, and exhibit some properties of these algebras. The principal result is Theorem \ref{thm_cofree}, which we apply to the particular cases $\ste^{\star}(n)$ and $\mathcal{E}^{\star}(n)$ to get the following corollary. Note that by construction, there are morphisms of Hopf algebroids
$$ \ste_{\star}(n) \rightarrow \ste_{\star}(n-1),$$
$$ \mathcal{E}_{\star}(n) \rightarrow \mathcal{E}_{\star}(n-1),$$
and
$$ \ste_{\star}(n) \rightarrow \mathcal{E}_{\star}$$
which gives to $ \ste_{\star}(n)$ a $\ste_{\star}(n-1)$-comodule structure, and accordingly for the other morphisms.

\begin{corr*}[\emph Corollary { \ref{cor_free}}]
For all $n \geq 0$, the Hopf algebroid $( H \mf_{ \star}, \ste_{ \star})$ is cofree as a comodule over its quotients
$ \mathcal{E}(n)_{ \star}$ and $ \ste_{ \star}(n)$, and these quotient Hopf algebroids are cofree over one other (whenever it makes sense).
\end{corr*}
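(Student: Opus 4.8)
The plan is to deduce the corollary directly from theorem \ref{thm_cofree}, so that the entire proof reduces to checking that the hypotheses of that theorem are met by the two families $\mathcal{E}(n)_\star$ and $\ste_\star(n)$. Concretely, theorem \ref{thm_cofree} takes as input a quotient Hopf algebroid of $(H\mf_\star, \ste_\star)$ cut out by a \emph{free pair} of profile functions and outputs the desired cofreeness; thus the first task is to exhibit $\mathcal{E}(n)_\star$ and $\ste_\star(n)$ as such quotients. These are the equivariant lifts of the classical (duals of the) exterior algebra $E(n)$ and finite subalgebra $A(n)$, and their defining profile functions will have been written down in the definitions preceding theorem \ref{thm_cofree}. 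First I would record these profile functions explicitly and confirm that the resulting quotient algebras are genuinely quotient Hopf algebroids, i.e.\ that the Hopf-ideal condition of question (2) holds.

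Next I would verify that each of these pairs is a free pair in the sense of section \ref{sec:Quotient}. For $\mathcal{E}(n)_\star$ this should be the more transparent case: the corresponding quotient is equivariantly exterior, its generators occupy a controlled $RO(\Z/2)$-graded range, and the free-pair conditions can be checked degree by degree. For $\ste_\star(n)$ the same verification must be carried out over the full profile, using the explicit coproduct of the Hu--Kriz dual Steenrod algebra to track how the defining relations interact. Granting these two verifications, theorem \ref{thm_cofree} applies and yields that $(H\mf_\star, \ste_\star)$ is cofree as a comodule over $\mathcal{E}(n)_\star$ and over $\ste_\star(n)$, which is the first assertion of the corollary.

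For the second assertion I would first record the quotient relation $\ste_\star(n) \twoheadrightarrow \mathcal{E}(n)_\star$, dual to the classical inclusion $E(n) \subseteq A(n)$, so that the comparison of the two quotients is governed by a \emph{relative} profile function. I would then check that this relative pair is again a free pair and apply theorem \ref{thm_cofree} in the relative setting, i.e.\ with $\ste_\star(n)$ playing the role occupied above by $\ste_\star$; this gives the cofreeness of $\ste_\star(n)$ over $\mathcal{E}(n)_\star$. The mutual statement of the corollary then follows by applying the relative form of theorem \ref{thm_cofree} along the full chain $\ste_\star \twoheadrightarrow \ste_\star(n) \twoheadrightarrow \mathcal{E}(n)_\star$ of quotient Hopf algebroids.

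The hard part will be the free-pair verifications in the equivariant setting. Unlike the classical case, the Hu--Kriz dual Steenrod algebra carries additional $RO(\Z/2)$-graded classes coming from the negative cone, and, as emphasised in proposition \ref{pro_dualite_operationcoop}, freeness must be tested as $RO(\Z/2)$-graded Mackey functors rather than merely as modules over $(H\mf_\star)_{\Z/2}$. The delicate point is therefore to confirm that the chosen profile functions respect this finer structure, so that no failure of freeness is concealed in the restriction/transfer data of the Mackey functor; once this is established, the passage to theorem \ref{thm_cofree} is formal.
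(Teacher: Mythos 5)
Your proposal follows essentially the same route as the paper: corollary \ref{cor_free} is given there without further argument as a direct application of theorem \ref{thm_cofree}, whose hypotheses hold because the defining profile functions of $\mathcal{E}(n)_{\star}$ and $\ste(n)_{\star}$ are free pairs (so by proposition \ref{pro_quotientlibre} the quotients are free $H\mf_{\star}^{\Z/2}$-modules, as the deformation argument of theorem \ref{resultat_coliberte} requires) and satisfy the ordering $(h_n,k_n)\geq(0,k_n)$ needed for the mutual statement. The only caveat is that your closing concern about Mackey-functor freeness is not needed for this corollary --- that finer structure enters only when dualizing comodules to modules in theorem \ref{main_thm}, not for the comodule cofreeness itself.
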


Using duality theory for modules{\em à la Boardman}, this gives the next principal result of this paper.

\begin{thm}
For all $n \geq 0$, the $\Z/2$-equivariant Steenrod algebra $H \mf^{ \star}_{\Z/2}H\F$ is free as a module over
$ \mathcal{E}(n)^{ \star}$ and $ \ste(n)^{ \star}$, and these algebras are free over one another.
\end{thm}

As an application, we compute the $\Z/2$-equivariant modulo $2$ cohomology of the spectrum representing $K$-theory with reality. This type of results is essential when it comes to compute the Adams spectral sequence converging to $K$-theory with reality, as it allows us to use a change of coefficients property in the extension groups appearing in the $E^2$-page of the spectral sequence. This was one of the motivations of this work. We refer the interested reader to the ext charts appearing at the end of \cite{BG10} for applications of the analogous non-equivariant result to the computation of some $ko$-cohomology groups.

\thanks{Acknowledgement: I want to thank my advisor Geoffrey Powell for both the stimulating mathematical discussions we had, and his careful reading of early expositions of the results presented in this paper.}

\tableofcontents

\section{Preliminaries} \label{sec:Prelim}

\subsection{Equivariant cohomology theories and $RO(\Z/2)$-gradings}

Recall that the real representation ring of $\Z/2$, $RO(\Z/2)$ is isomorphic to $\Z \oplus \Z\alpha$, where $1$ stands for the one dimensional trivial representation, and $\alpha$ for the sign representation.
Consider the category of $\Z/2$-equivariant spectra indexed over a complete universe $\Z/2\sh$. We consider its usual model category structure, so that its associated homotopy category is the $\Z/2$-equivariant stable homotopy category $\Z/2 \sh$. We refer to \cite{LMS,GM95} for the constructions and definitions. The two following properties of $\Z/2\Sp$ are of particular importance.

\begin{rk} \label{rk_sv}
\begin{enumerate}
\item We implicitely chose a model for the category of $\Z/2$-equivariant spectra that has good monoidal properties, such as orthogonal equivariant $\Z/2$-spectra.
\item The meaning of the indexation over a complete universe is that the smash product with all the spheres $S^V$ obtained as the one point compactifcation of the real representations $V$ of $\Z/2$ is inverted up to homotopy.
\end{enumerate}
\end{rk}

\begin{nota}
\begin{itemize}
\item The $\star$ superscript will always denote a $RO(\Z/2)$-graded object, whereas the $*$ denotes a $\Z$-graded one.

\item There are ring morphisms $$i : \Z \inj RO(\Z/2)$$ (subring of trivial virtual representations), $$dim : RO(\Z/2) \surj \Z$$  which associates to a representation $V$ its virtual dimension, and $$twist : RO(\Z/2) \rightarrow \Z$$ which is defined by $twist(k+l\alpha) = l$.
\end{itemize}
\end{nota}

\begin{de}
\begin{itemize}
\item Let $\mathcal{O}$ be the full subcategory of $\Z/2\sh$ whose objects are $S^0$ and $\Z/2_+$. A Mackey functor for $\Z/2$ is an additive functor $\mathcal{O} \rightarrow \Ab$.
\item For $X$ and $Y$ two $\Z/2$-spectra, the stable classes of maps from $X$ to $Y$, denoted $[X,Y]$ is naturally a $RO(\Z/2)$-graded Mackey functor
$$ \Z/2\sh^{op} \times \Z/2\sh \rightarrow \mathcal{M}^{RO(\Z/2)}$$
defined, for $V \in RO(\Z/2)$ by
$$ \xymatrix{ [X,\Sigma^V Y]^{\Z/2} \ar@/^/[d]^{ \rho } \\ 
[X,\Sigma^V Y]^e, \ar@/^/[u]^{\tau} } $$
where, because of the adjunction $$[\Z/2_+\wedge X, \Sigma^V Y]^{\Z/2} \cong [X,\Sigma^V Y]^e \cong [X, \Z/2_+\wedge \Sigma^V Y]^{\Z/2},$$ the restriction and transfer of these Mackey functors are induced by the projection $\Z/2_+ \rightarrow S^0$ and its Spanier-Whitehead dual $S^0 \rightarrow \Z/2_+$ respectively.
\end{itemize}
\end{de}

\begin{nota}
Let $M$ be a Mackey functor. If $\rho_M : M_{\Z/2} \rightarrow M_e$ denotes the restriction of $M$, $\tau_M : M_e \rightarrow M_{\Z/2} $ its transfer, and $\theta_M :  M_e \rightarrow M_e$ the action of $\Z/2$ on $M_e$, we represent $M$ by the following diagram:
$$\xymatrix{ M_{\Z/2}  \ar@/^/[d]^{ \rho_M } \\ 
M_e. \ar@/^/[u]^{\tau_M} } $$
Observe that, since $\theta_M = \rho_M\tau_M -1$ this diagram suffices to determine $M$.
 We denote
$$(-)_e : \mathcal{M} \rightarrow \Z[\Z/2]-mod $$
and
$$ (-)_{\Z/2} : \mathcal{M} \rightarrow \Z-mod $$
the evaluation functors associated to the two objects of the orbit category for $\Z/2$.
\end{nota}

In particular $\Z/2$-equivariant cohomology theories are functors $\Z/2\sh^{op} \rightarrow \mathcal{M}^{RO(\Z/2)}$ satisfying equivariant analogues of the Eilenberg-Steenrod axioms.

We now turn to some elementary definitions in the abelian category of $RO(\Z/2)$-graded objects.

\begin{nota}
Let $(\ab, \otimes, 1_{\ab})$ be a closed symmetric monoidal abelian category.
Denote $\ab^{RO(\Z/2)}$ the category of $RO(\Z/2)$-graded objects. For an element $x$ of a $RO(\Z/2)$-graded object $M^{\star}$, denote $|x|  \in RO(\Z/2)$ its degree.
\end{nota}

As the underlying abelian group of $RO(\Z/2)$ is in particular an abelian monoid, one can define a symmetric monoidal structure on $\ab^{RO(\Z/2)}$ in a very natural way.

\begin{de}
Let $ \otimes :\ab^{RO(\Z/2)} \times \ab^{RO(\Z/2)} \rightarrow \ab^{RO(\Z/2)}$ the functor
\[
 (x  \otimes y)_{ \star} = \bigoplus_{V \oplus W = \star} x_V \otimes y_W.
\] For two $RO(\Z/2)$-graded objects $M,N$, $ \tau : M \otimes N \rightarrow N \otimes M$ is the natural isomorphism determined by $ ( \tau : M^V \otimes N^W \rightarrow N^W \otimes M^V )= (-1)^{dim(V) dim(W)} \tau_{\ab}$, where $\tau_{\ab}$ denotes the twist isomorphism in the category $\ab$.
\end{de}

\begin{lemma}
The triple $( \otimes, \tau, 1_{\ab})$, with the evident coherence morphisms give $\ab^{RO(\Z/2)}$ the structure of a closed symmetric monoidal category, where $\Hom$ is given in degree $\star$ by morphisms which increase the degree by $ \star$.
\end{lemma}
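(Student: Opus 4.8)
The plan is to recognise this as an instance of Day convolution. The category $\ab^{RO(\Z/2)}$ is the category of functors from the discrete symmetric monoidal category $(RO(\Z/2), \oplus, 0)$ to $\ab$, and the tensor product of the statement is exactly the convolution product induced by $\oplus$ on the indexing group and $\otimes$ on $\ab$ (assuming, as usual, that $\ab$ is (co)complete so that the displayed coproducts, and the products appearing in the internal hom, exist). Since $RO(\Z/2)$ is a genuine abelian group under $\oplus$, its addition is strictly associative, unital and commutative; this forces all the structural isomorphisms to be assembled degreewise from those of $\ab$, and the coherence axioms to reduce, summand by summand, to the corresponding axioms in $\ab$ together with elementary sign identities. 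I would split the verification into three steps: the monoidal structure, the symmetry, and closedness. Functoriality of $\otimes$ is immediate from functoriality of $\otimes$ in $\ab$ and the universal property of the coproduct.

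For the monoidal structure, the unit is $1_{\ab}$ concentrated in degree $0$. Both iterated products $(M \otimes N) \otimes P$ and $M \otimes (N \otimes P)$ are, in degree $\star$, the coproduct over all decompositions $V \oplus W \oplus U = \star$ of the triple tensor products in $\ab$, so the associator is the direct sum of the associators $(M_V \otimes N_W) \otimes P_U \cong M_V \otimes (N_W \otimes P_U)$ of $\ab$; the left and right unitors come likewise from those of $\ab$. No signs occur in the associator or the unitors, so the pentagon and the triangle identities hold in $\ab^{RO(\Z/2)}$ because they hold summandwise in $\ab$.

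The symmetry is where the only genuine point lies. Writing $\tau$ degreewise as $(-1)^{\dim V \dim W}\,\tau_{\ab}$ on the summand $M_V \otimes N_W$, the identity $\tau_{N,M}\circ\tau_{M,N} = \mathrm{id}$ holds because $\tau_{\ab}$ is an involution and the sign squares to $1$. For the hexagon, the crucial observation is that the sign depends on the $RO(\Z/2)$-degrees only through the virtual dimension, and that $\dim : RO(\Z/2) \rightarrow \Z$ is a monoid homomorphism, so $\dim(W \oplus U) = \dim W + \dim U$. Hence on the summand of degrees $(V,W,U)$ the sign attached to $\tau_{M, N \otimes P}$ factors as $(-1)^{\dim V(\dim W + \dim U)} = (-1)^{\dim V \dim W}(-1)^{\dim V \dim U}$, precisely the product of the signs of $\tau_{M,N}$ and $\tau_{M,P}$; the hexagon therefore reduces to the hexagon in $\ab$, the additivity of $\dim$ absorbing all the sign bookkeeping. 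In effect the sign rule is the pullback along $\dim$ of the usual Koszul sign on $\Z$-graded objects, which is exactly why coherence survives. I expect this to be the main, if mild, obstacle, since it is the one place where one must check that grading by the full group $RO(\Z/2)$ rather than by $\dim$ alone does not break the axioms.

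For closedness I would define the internal hom degreewise, setting $\underline{\Hom}(N,P)_{\star}$ to be the object of morphisms raising degree by $\star$, that is $\prod_{V} \underline{\Hom}_{\ab}(N_V, P_{V \oplus \star})$, where $\underline{\Hom}_{\ab}$ is the internal hom of $\ab$. The adjunction isomorphism $\Hom_{\ab^{RO(\Z/2)}}(M \otimes N, P) \cong \Hom_{\ab^{RO(\Z/2)}}(M, \underline{\Hom}(N,P))$ is then assembled from the degreewise adjunction isomorphisms $\Hom_{\ab}(M_V \otimes N_W, P_{V \oplus W}) \cong \Hom_{\ab}(M_V, \underline{\Hom}_{\ab}(N_W, P_{V \oplus W}))$ by collecting over all $V$ and $W$; naturality in $M$, $N$ and $P$ is inherited from that of the adjunction in $\ab$. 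This exhibits $-\otimes N$ as left adjoint to $\underline{\Hom}(N,-)$ and completes the proof.
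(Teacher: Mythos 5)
Your proof is correct and is essentially the verification the paper itself leaves to the reader (``formally the same as in the classical case''): you identify the product as Day convolution over the group $RO(\Z/2)$, reduce the coherence axioms summandwise to those of $\ab$, and correctly isolate the one nontrivial point, namely that the Koszul sign is pulled back along the additive homomorphism $dim$ and therefore respects the hexagon. Nothing further is needed.
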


\begin{proof}
The proof, which is formally the same as in the classical case, is left to the reader.
\end{proof}

\begin{nota} \label{de_algebresrogr}
\begin{enumerate}
 \item A (commutative, unitary)  $RO(\Z/2)$-graded ring in $\ab$ is a (commutative, unitary) monoid in the category $( \otimes, \tau, 1_{\ab})$,
 \item for a commutative ring $H$, $H-Mod$ (the category of $H$-modules) and $H-Bimod$ (the category of $H$-bimodules) are defined via the usual diagrams,
\item for $R$ a cocommutative comonoid in $( \otimes, \tau, 1_{\ab})$, the category of $R$-comodules is defined as usual.
\end{enumerate}
\end{nota}

\subsection{The $\Z/2$-spectrum $H\mf$}

\begin{de}
Denote $\underline{\pi}_{\star} : \Z/2\sh \rightarrow \mathcal{M}^{RO(\Z/2)}$ the functor $X \mapsto [S^{\star}, X]$.
\end{de}

Using the functor $\underline{\pi}_*$, given by restricting $\pi_{\star}$ to integer gradings, one can extend the definition of Postnikov tower to the $\Z/2$-equivariant setting. This provides the appropriate notion of ordinary cohomology theory.

\begin{pro} \label{pro:functorH}
The $\Z/2$-equivariant Postnikov tower defines a $t$-structure on the $\Z/2$-equivariant stable homotopy category whose heart is isomorphic to the category $\mathcal{M}$ of Mackey functors for the group $\Z/2$. In particular, one has a functor
$$H : \mathcal{M} \rightarrow \Z/2\sh$$
which sends short exact sequences of Mackey functors to distinguished triangles of $\Z/2$-equivariant spectra.
\end{pro}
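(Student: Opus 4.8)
The plan is to build the $t$-structure directly from the integer-graded homotopy Mackey functors and then identify its heart with $\mathcal{M}$. First I would declare the candidate aisles: let $\Z/2\sh_{\geq 0}$ (resp. $\Z/2\sh_{\leq 0}$) be the full subcategory of objects $X$ with $\underline{\pi}_n(X) = 0$ for all $n < 0$ (resp. all $n > 0$), where $\underline{\pi}_n$ is the restriction of $\underline{\pi}_{\star}$ to integer degree $n$. Unwinding the definition, $\underline{\pi}_n(X)$ is the Mackey functor whose two levels are $[S^n, X]^{\Z/2}$ and $[\Sigma^n \Z/2_+, X]^{\Z/2} \cong [S^n, X]^e$, so the aisles are cut out exactly by the invariants probed by the two orbit cells $S^0$ and $\Z/2_+$, which compactly generate $\Z/2\sh$.

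Verifying the $t$-structure axioms then follows the non-equivariant template. Stability of the aisles under the relevant shifts is immediate from the long exact sequences of homotopy Mackey functors. For orthogonality I would show $[X, Y] = 0$ whenever $X \in \Z/2\sh_{\geq 1}$ and $Y \in \Z/2\sh_{\leq 0}$: since the orbit cells generate, every object of $\Z/2\sh_{\geq 1}$ is built from cells $\Sigma^m S^0$ and $\Sigma^m \Z/2_+$ with $m \geq 1$, and maps out of such a cell into $Y$ compute a positive-degree homotopy Mackey functor of $Y$, which vanishes. The truncation triangles come from the equivariant Postnikov tower: given $X$, attach equivariant cells (in degrees $\geq 2$) to kill $\underline{\pi}_m(X)$ for every $m \geq 1$, producing $X \to \tau_{\leq 0} X$, and set $\tau_{\geq 1} X$ to be its fiber. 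The long exact sequence then yields the required triangle $\tau_{\geq 1} X \to X \to \tau_{\leq 0} X$ with the outer terms in the correct aisles.

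Next I would identify the heart $\Z/2\sh_{\geq 0} \cap \Z/2\sh_{\leq 0}$, consisting of the $X$ with $\underline{\pi}_n(X) = 0$ for $n \neq 0$, with $\mathcal{M}$ via $\underline{\pi}_0$. Fullness and faithfulness would reduce, by resolving a Mackey functor by the representables corepresented by $S^0$ and $\Z/2_+$, to the Yoneda computations $[S^0, Y] = \underline{\pi}_0(Y)_{\Z/2}$ and $[\Z/2_+, Y] = \underline{\pi}_0(Y)_e$ for $Y$ in the heart. The crux, and the step I expect to be the main obstacle, is essential surjectivity: for each Mackey functor $M$ one must construct an Eilenberg--MacLane object $HM$ in the heart with $\underline{\pi}_0(HM) = M$. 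I would do this by choosing a presentation of $M$ by free Mackey functors, realizing the presenting map by a map of wedges of $S^0$'s and $\Z/2_+$'s, and then applying the truncations already available from the $t$-structure to annihilate the surplus homotopy; this is the one place where the genuine structure of $\mathcal{M}$ is used rather than formal triangulated arguments, and one must check that both the values and the restriction/transfer data of $M$ are realized correctly. (Alternatively one can invoke Brown representability.)

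Finally, $H$ is the composite of the inverse equivalence $\mathcal{M} \xrightarrow{\sim} \Z/2\sh_{\geq 0} \cap \Z/2\sh_{\leq 0}$ with the inclusion into $\Z/2\sh$. That $H$ carries a short exact sequence $0 \to M' \to M \to M'' \to 0$ to a distinguished triangle is a general feature of hearts of $t$-structures: completing $HM' \to HM$ to a distinguished triangle $HM' \to HM \to C \to \Sigma HM'$ and reading off the long exact sequence of homotopy Mackey functors shows that $C$ lies in the heart with $\underline{\pi}_0(C) \cong M''$, so $C \simeq HM''$ and the triangle is the desired one.
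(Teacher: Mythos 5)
Your argument is correct, but it is genuinely different in character from what the paper does: the paper gives no construction at all and simply cites \cite[proposition I.7.14]{LMS} and \cite[Theorem 1.13]{Le95}, which together contain exactly the content you reconstruct. Your proposal is the standard direct argument, and its one nonformal step --- essential surjectivity of $\underline{\pi}_0$ on the heart, i.e.\ the construction of $HM$ from a free presentation of $M$ by the Mackey functors represented by $S^0$ and $\Z/2_+$ followed by truncation --- is precisely the content of the LMS reference; the rest (orthogonality via cell structures on connective objects, truncation triangles from the Postnikov tower, and the short-exact-sequence-to-triangle property as a general fact about hearts) is formal once one knows that the orbits compactly generate $\Z/2\sh$. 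What the citation buys the paper is brevity and the reassurance that the equivariant cell-approximation and connectivity arguments you invoke in passing (a connective object admits a cell structure with cells in nonnegative dimensions; maps out of an infinite wedge of orbit cells are controlled) have been carried out carefully; what your version buys is a self-contained proof that makes visible where the category $\mathcal{M}$ of Mackey functors actually enters, namely through $\underline{\pi}_0$ of the two orbit spectra and the free resolutions they provide. If you were to write this up in full you would want to make the cell-approximation step and the exactness of $\underline{\pi}_0$ on the heart explicit, but there is no gap in the outline.
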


\begin{proof}
This proposition summarizes the results of \cite[Proposition I.7.14]{LMS} and \cite[Theorem 1.13]{Le95} in the particular case of the group with two elements.
\end{proof}

\begin{de}
Denote by $\mz$ the Mackey functor
$$\xymatrix{ \Z \ar@/^/[d]^{ =} \\ 
\Z \ar@/^/[u]^{ 2} } $$
and $\mf$ the Mackey functor
 $$\xymatrix{ \F \ar@/^/[d]^{ =} \\ 
\F. \ar@/^/[u]^{0} } $$
\end{de}

\begin{rk}
The constant Mackey functors in general, and $\mf$, $\mz$ in particular, play a special role in this context. One reason is that equivariant Eilenberg-MacLane spectra with coefficients in constant Mackey functors are exactly the $0th$-slices in the sense of the slice filtration (see \cite[Proposition 4.47]{HHR}). The spectrum $H\mz$ is even more particular since it identifies with $P^0_0(S^0)$ by \cite[Corollary 4.51]{HHR}. This particular role of $H\mz$ was first observed by Dugger in \cite{Du03}, studying an Atiyah-Hirzebruch spectral sequence for $K \mathbb{R}$-theory, which was really the slice spectral sequence of $K \mathbb{R}$.
\end{rk}

\subsection{The coefficient ring for $H\mf$}

We now write down the structure of the coefficient ring for $H\mf$-cohomology. For the complete computation, see \cite[p.371]{HK01}.

Recall from Remark \ref{rk_sv} that for a real representation, $V$ of $\Z/2$, the object $S^V$ stands for the one point compactification of $V$ as a $\Z/2$-space.

\begin{pro} \label{pro_cohompoint}
 The $RO( \Z/2)$-graded Mackey functor $H \mf_{ \star}$ is represented in the following picture. \\ The symbol $ \bullet$ stands for the Mackey functor $$ \xymatrix{ \Z/2 \ar@/^/[d] \\ 
0, \ar@/^/[u] } $$ 
and $L$ stands for $$\xymatrix{ \Z/2 \ar@/^/[d]^{ 0} \\ 
\Z/2 \ar@/^/[u]^{=} } $$
finally, $L_-$ represents the Mackey functor which consists only of $\F$ concentrated in $(-)_e$.
A vertical line represents the product with the Euler class $a$, which is the class of the map $S^0 \inj S^{\alpha}$. This product induces one of the following Mackey functor maps:
\begin{itemize}
\item the identity of $ \bullet$, 
\item the unique non-trivial morphism $ L \rightarrow \bullet$,
\item the unique non-trivial morphism $ \bullet \inj \mf$.
\end{itemize}
 
 \begin{center}
\definecolor{cqcqcq}{rgb}{0.75,0.75,0.75}
\definecolor{qqqqff}{rgb}{0.33,0.33,0.33}


\begin{tikzpicture}[line cap=round,line join=round,>=triangle 45,x=0.5cm,y=0.5cm]
\draw[->,color=black] (-10,0) -- (10,0);
\foreach \x in {-10,-8,-6,-4,-2,2,4,6,8}
\draw[shift={(\x,0)},color=black] (0pt,2pt) -- (0pt,-2pt) node[below] {\footnotesize $\x$};
\draw[->,color=black] (0,-10) -- (0,10);
\foreach \y in {-10,-8,-6,-4,-2,2,4,6,8}
\draw[shift={(0,\y)},color=black] (2pt,0pt) -- (-2pt,0pt) node[left] {\footnotesize $\y$};
\draw[color=black] (0pt,-10pt) node[right] {\footnotesize $0$};
\clip(-10,-10) rectangle (10,10);
\draw (-0.31,0.5) node[anchor=north west] {$\underline{ \mathbb{F}}$};
\draw (0.7,-0.51) node[anchor=north west] {$\underline{ \mathbb{F}}$};
\draw (1.7,-1.52) node[anchor=north west] {$\underline{ \mathbb{F}}$};
\draw (2.71,-2.52) node[anchor=north west] {$\underline{ \mathbb{F}}$};
\draw (3.67,-3.51) node[anchor=north west] {$\underline{ \mathbb{F}}$};
\draw (4.65,-4.51) node[anchor=north west] {$\underline{ \mathbb{F}}$};
\draw (5.66,-5.49) node[anchor=north west] {$\underline{ \mathbb{F}}$};
\draw (6.66,-6.5) node[anchor=north west] {$\underline{ \mathbb{F}}$};
\draw (7.65,-7.46) node[anchor=north west] {$\underline{ \mathbb{F}}$};
\draw (8.63,-8.47) node[anchor=north west] {$\underline{ \mathbb{F}}$};
\draw (9.64,-9.45) node[anchor=north west] {$\underline{ \mathbb{F}}$};
\draw (-1.81,1.5) node[anchor=north west] {$L_-$};
\draw (-12.25,12.46) node[anchor=north west] {$L$};
\draw (-11.24,11.45) node[anchor=north west] {$L$};
\draw (-10.28,10.49) node[anchor=north west] {$L$};
\draw (-9.3,9.51) node[anchor=north west] {$L$};
\draw (-8.29,8.5) node[anchor=north west] {$L$};
\draw (-7.29,7.5) node[anchor=north west] {$L$};
\draw (-6.3,6.51) node[anchor=north west] {$L$};
\draw (-5.32,5.51) node[anchor=north west] {$L$};
\draw (-4.31,4.53) node[anchor=north west] {$L$};
\draw (-3.31,3.52) node[anchor=north west] {$L$};
\draw (-2.4,2.49) node[anchor=north west] {$L$};
\draw (0,0) -- (0,-10);
\draw (1,-1) -- (1,-10);
\draw (2,-2) -- (2,-10);
\draw (3,-3) -- (3,-10);
\draw (4,-4) -- (4,-10);
\draw (5,-5) -- (5,-10);
\draw (6,-6) -- (6,-10);
\draw (7,-7) -- (7,-10);
\draw (8,-8) -- (8,-10);
\draw (9,-9) -- (9,-10);
\draw (10,-10) -- (10,-10);
\draw (-2,2) -- (-2,10);
\draw (-3,3) -- (-3,10);
\draw (-4,4) -- (-4,10);
\draw (-5,5) -- (-5,10);
\draw (-6,6) -- (-6,10);
\draw (-7,7) -- (-7,10);
\draw (-8,8) -- (-8,10);
\draw (-9,9) -- (-9,10);
\draw (-10,10) -- (-10,10);
\draw (-11,11) -- (-11,10);
\draw (9.4,0.9) node[anchor=north west] {$1$};
\draw (0.1,10) node[anchor=north west] {$\alpha$};
\begin{scriptsize}
\fill [color=qqqqff] (1,-2) circle (1.5pt);
\fill [color=qqqqff] (1,-3) circle (1.5pt);
\fill [color=qqqqff] (1,-4) circle (1.5pt);
\fill [color=qqqqff] (1,-5) circle (1.5pt);
\fill [color=qqqqff] (1,-6) circle (1.5pt);
\fill [color=qqqqff] (1,-7) circle (1.5pt);
\fill [color=qqqqff] (1,-8) circle (1.5pt);
\fill [color=qqqqff] (1,-9) circle (1.5pt);
\fill [color=qqqqff] (1,-10) circle (1.5pt);
\fill [color=qqqqff] (1,-11) circle (1.5pt);
\fill [color=qqqqff] (0,-1) circle (1.5pt);
\fill [color=qqqqff] (0,-2) circle (1.5pt);
\fill [color=qqqqff] (0,-3) circle (1.5pt);
\fill [color=qqqqff] (0,-4) circle (1.5pt);
\fill [color=qqqqff] (0,-5) circle (1.5pt);
\fill [color=qqqqff] (0,-6) circle (1.5pt);
\fill [color=qqqqff] (0,-7) circle (1.5pt);
\fill [color=qqqqff] (0,-8) circle (1.5pt);
\fill [color=qqqqff] (0,-9) circle (1.5pt);
\fill [color=qqqqff] (0,-10) circle (1.5pt);
\fill [color=qqqqff] (0,-11) circle (1.5pt);
\fill [color=qqqqff] (2,-3) circle (1.5pt);
\fill [color=qqqqff] (2,-4) circle (1.5pt);
\fill [color=qqqqff] (2,-5) circle (1.5pt);
\fill [color=qqqqff] (2,-6) circle (1.5pt);
\fill [color=qqqqff] (2,-7) circle (1.5pt);
\fill [color=qqqqff] (2,-8) circle (1.5pt);
\fill [color=qqqqff] (2,-9) circle (1.5pt);
\fill [color=qqqqff] (2,-10) circle (1.5pt);
\fill [color=qqqqff] (2,-11) circle (1.5pt);
\fill [color=qqqqff] (3,-4) circle (1.5pt);
\fill [color=qqqqff] (3,-5) circle (1.5pt);
\fill [color=qqqqff] (3,-6) circle (1.5pt);
\fill [color=qqqqff] (3,-7) circle (1.5pt);
\fill [color=qqqqff] (3,-8) circle (1.5pt);
\fill [color=qqqqff] (3,-9) circle (1.5pt);
\fill [color=qqqqff] (3,-10) circle (1.5pt);
\fill [color=qqqqff] (3,-11) circle (1.5pt);
\fill [color=qqqqff] (4,-5) circle (1.5pt);
\fill [color=qqqqff] (4,-6) circle (1.5pt);
\fill [color=qqqqff] (4,-7) circle (1.5pt);
\fill [color=qqqqff] (4,-8) circle (1.5pt);
\fill [color=qqqqff] (4,-9) circle (1.5pt);
\fill [color=qqqqff] (4,-10) circle (1.5pt);
\fill [color=qqqqff] (4,-11) circle (1.5pt);
\fill [color=qqqqff] (5,-6) circle (1.5pt);
\fill [color=qqqqff] (5,-7) circle (1.5pt);
\fill [color=qqqqff] (5,-8) circle (1.5pt);
\fill [color=qqqqff] (5,-9) circle (1.5pt);
\fill [color=qqqqff] (5,-10) circle (1.5pt);
\fill [color=qqqqff] (5,-11) circle (1.5pt);
\fill [color=qqqqff] (6,-7) circle (1.5pt);
\fill [color=qqqqff] (6,-8) circle (1.5pt);
\fill [color=qqqqff] (6,-9) circle (1.5pt);
\fill [color=qqqqff] (6,-10) circle (1.5pt);
\fill [color=qqqqff] (6,-11) circle (1.5pt);
\fill [color=qqqqff] (7,-8) circle (1.5pt);
\fill [color=qqqqff] (7,-9) circle (1.5pt);
\fill [color=qqqqff] (7,-10) circle (1.5pt);
\fill [color=qqqqff] (7,-11) circle (1.5pt);
\fill [color=qqqqff] (8,-9) circle (1.5pt);
\fill [color=qqqqff] (8,-10) circle (1.5pt);
\fill [color=qqqqff] (8,-11) circle (1.5pt);
\fill [color=qqqqff] (9,-10) circle (1.5pt);
\fill [color=qqqqff] (9,-11) circle (1.5pt);
\fill [color=qqqqff] (10,-11) circle (1.5pt);
\fill [color=qqqqff] (-2,11) circle (1.5pt);
\fill [color=qqqqff] (-2,10) circle (1.5pt);
\fill [color=qqqqff] (-2,9) circle (1.5pt);
\fill [color=qqqqff] (-2,8) circle (1.5pt);
\fill [color=qqqqff] (-2,7) circle (1.5pt);
\fill [color=qqqqff] (-2,6) circle (1.5pt);
\fill [color=qqqqff] (-2,5) circle (1.5pt);
\fill [color=qqqqff] (-2,4) circle (1.5pt);
\fill [color=qqqqff] (-2,3) circle (1.5pt);
\fill [color=qqqqff] (-3,11) circle (1.5pt);
\fill [color=qqqqff] (-3,10) circle (1.5pt);
\fill [color=qqqqff] (-3,9) circle (1.5pt);
\fill [color=qqqqff] (-3,8) circle (1.5pt);
\fill [color=qqqqff] (-3,7) circle (1.5pt);
\fill [color=qqqqff] (-3,6) circle (1.5pt);
\fill [color=qqqqff] (-3,5) circle (1.5pt);
\fill [color=qqqqff] (-3,4) circle (1.5pt);
\fill [color=qqqqff] (-4,11) circle (1.5pt);
\fill [color=qqqqff] (-4,10) circle (1.5pt);
\fill [color=qqqqff] (-4,9) circle (1.5pt);
\fill [color=qqqqff] (-4,8) circle (1.5pt);
\fill [color=qqqqff] (-4,7) circle (1.5pt);
\fill [color=qqqqff] (-4,6) circle (1.5pt);
\fill [color=qqqqff] (-4,5) circle (1.5pt);
\fill [color=qqqqff] (-5,11) circle (1.5pt);
\fill [color=qqqqff] (-5,10) circle (1.5pt);
\fill [color=qqqqff] (-5,9) circle (1.5pt);
\fill [color=qqqqff] (-5,8) circle (1.5pt);
\fill [color=qqqqff] (-5,7) circle (1.5pt);
\fill [color=qqqqff] (-5,6) circle (1.5pt);
\fill [color=qqqqff] (-6,11) circle (1.5pt);
\fill [color=qqqqff] (-6,10) circle (1.5pt);
\fill [color=qqqqff] (-6,9) circle (1.5pt);
\fill [color=qqqqff] (-6,8) circle (1.5pt);
\fill [color=qqqqff] (-6,7) circle (1.5pt);
\fill [color=qqqqff] (-7,11) circle (1.5pt);
\fill [color=qqqqff] (-7,10) circle (1.5pt);
\fill [color=qqqqff] (-7,9) circle (1.5pt);
\fill [color=qqqqff] (-7,8) circle (1.5pt);
\fill [color=qqqqff] (-8,11) circle (1.5pt);
\fill [color=qqqqff] (-8,10) circle (1.5pt);
\fill [color=qqqqff] (-8,9) circle (1.5pt);
\fill [color=qqqqff] (-9,11) circle (1.5pt);
\fill [color=qqqqff] (-9,10) circle (1.5pt);
\fill [color=qqqqff] (-10,11) circle (1.5pt);
\end{scriptsize}
\end{tikzpicture}

\end{center}

\end{pro}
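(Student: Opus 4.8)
The plan is to compute the Mackey functor $H\mf_\star = \underline{\pi}_\star(H\mf)$ by determining separately its two evaluations $(H\mf_\star)_e$ and $(H\mf_\star)_{\Z/2}$, together with the restriction and transfer relating them. The underlying part is immediate: forgetting the action identifies the underlying spectrum of $H\mf$ with the non-equivariant $H\F$, so that $(H\mf_V)_e = [S^V,H\mf]^e = \pi^e_{\dim V}(H\F)$, which is $\F$ with trivial action when $\dim V = 0$ and vanishes otherwise. Hence the underlying part is a single copy of $\F$ on the antidiagonal $\{k+l=0\}$ and nothing else; this already accounts for the lower factor of every $\mf$, $L$ and $L_-$ in the picture, and it is the non-equivariant input against which everything else will be checked.

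For the fixed part I would run the Euler class cofiber sequence $(\Z/2)_+ \to S^0 \xrightarrow{a} S^\alpha$, where $a$ is the class of $S^0 \inj S^\alpha$. Smashing with $H\mf$ and applying $\underline{\pi}_\star$ produces a long exact sequence connecting the degrees $V$ and $V-\alpha$, whose error term $\underline{\pi}_\star((\Z/2)_+\wedge H\mf)$ is the induced free Mackey functor, again concentrated on the antidiagonal since $(\Z/2)_+\wedge H\mf$ has underlying homotopy only in dimension $0$. Starting from the integer-graded homotopy $\underline{\pi}_*(H\mf)$, which is $\mf$ in degree $0$ and vanishes otherwise by Proposition \ref{pro:functorH}, and inducting on the twist $twist(V)$, this propagates the fixed-point groups and exhibits the positive cone as the polynomial algebra $\F[u,a]$ on the Euler class $a$ of degree $-\alpha$ and an orientation class $u$ of degree $1-\alpha$. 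On the antidiagonal the powers $u^k$ restrict to generators of underlying $H\F$, producing the constant Mackey functors $\mf$, whereas the monomials $u^i a^j$ with $j\geq 1$ restrict to $0$ (as $a$ does), producing the fixed-point-only Mackey functors $\bullet$ that fill the lower-right region.

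The remaining and hardest part is the negative cone occupying the upper-left region. I would obtain it by inverting the Euler class, or equivalently by analysing the isotropy separation cofiber sequence $E\Z/2_+\wedge H\mf \to H\mf \to \tilde E\Z/2\wedge H\mf$: the geometric piece recovers the $a$-inverted positive cone, while the Borel piece $E\Z/2_+\wedge H\mf$ produces an infinitely $u$- and $a$-divisible tower dual to $\F[u,a]$, generated by a single class $\theta$ of degree $2\alpha-2$ at the inner corner of that region. Off the antidiagonal this cone consists again of fixed-point-only classes $\bullet$, while on the antidiagonal each class is glued to the underlying $\F$ of the first paragraph, this time with transfer an isomorphism and restriction zero, which is exactly the Mackey functor $L$. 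The main obstacle is the bookkeeping at and near the antidiagonal: one must pin down the exact location of $\theta$, thereby explaining the gap between the two cones and, in particular, why the antidiagonal spot adjacent to the origin carries only the underlying class $L_-$ while the spots beyond it carry $L$, and then read off all restrictions and transfers by chasing the long exact sequences of the cofiber sequence above against the underlying computation. It is this matching, rather than any single difficult step, where the real work lies; carried out in full it reproduces the computation of \cite[p.371]{HK01}.
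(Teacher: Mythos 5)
The paper offers no proof of this proposition at all: it simply records the answer and refers to \cite[p.371]{HK01} for the complete computation. Your proposal therefore does strictly more than the paper, and what it does is the standard argument, carried out correctly in outline. Splitting the Mackey functor into its underlying part (immediate from $\pi^e_V(H\mf)=\pi_{\dim V}(H\F)$) and its fixed part, propagating the latter along the Euler cofiber sequence $\Z/2_+\to S^0\to S^{\alpha}$ starting from $\underline{\pi}_*(H\mf)=\mf$ concentrated in degree $0$, and then resolving the negative cone by isotropy separation (geometric part $=a^{-1}\F[u,a]$, Borel part the divisible tower on a class in degree $2\alpha-2$) is exactly how this computation is done in the literature, and your identifications of the Mackey functors $\mf$, $\bullet$, $L$ and $L_-$ from the restriction/transfer data all match the picture, including the location of the corner class and the isolated $L_-$ in degree $\alpha-1$. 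The one place where you are candid about deferring work --- the boundary maps and extensions along the antidiagonal, i.e.\ going in the direction of \emph{increasing} twist, where the long exact sequence alone only determines the fixed part up to extension --- is genuinely where the content lies, and it is precisely what the isotropy separation sequence (or, in the paper's economy, the citation of Hu--Kriz) is there to settle; as a proof sketch at the level of detail the paper itself provides, your argument is complete and consistent with the stated answer.
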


\begin{nota} \label{nota:sigma}
We call $\sigma^{-1}$ the non trivial element in degree $(1-\alpha)$, so that $(H\mf)_{\Z/2}$ contains $\F[a, \sigma^{-1}]$ as a subalgebra.
\end{nota}

We finish this subsection by a lemma about Mackey functors, relating the $ \Z[a]/(2a)$-module structure on $([-,-]^{\star})_{\Z/2}$ with the $RO( \Z/2)$-graded Mackey functor structure of $ [-,-]^{\star}$.

\begin{lemma} \label{lemma_mackey_a}
 Let $E$ be a $ \Z/2$-spectrum.
\begin{enumerate}
 \item $Im(a)= Ker( \rho)$ where $ \rho : (E_{ \star})_{\Z/2} \rightarrow  (E_{ \star})_e $ stands for the restriction of the Mackey functor $E_{ \star}$.
\item $ Ker(a) = Im( \tau)$ where $ \tau $ is the transfer.
\end{enumerate}
\end{lemma}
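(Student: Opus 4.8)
The plan is to derive both statements simultaneously from the single long exact sequence attached to the Euler class. The starting point is the cofiber sequence of based $\Z/2$-spaces associated to the representation sphere $S^\alpha$: the unit sphere $S(\alpha)$ is the free orbit $\Z/2$, so the cofiber sequence $S(\alpha)_+ \to D(\alpha)_+ \to S^\alpha$ takes the form
\[
\Z/2_+ \xrightarrow{\,p\,} S^0 \xrightarrow{\,a\,} S^\alpha,
\]
where $p$ is the projection and $a$ is exactly the inclusion $S^0 \hookrightarrow S^\alpha$ representing the Euler class of Proposition \ref{pro_cohompoint}. Smashing with $E$ yields a cofiber sequence of $\Z/2$-spectra $\Z/2_+ \wedge E \to E \xrightarrow{a} S^\alpha \wedge E$.

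Next I would apply $[S^\star,-]^{\Z/2} = \underline{\pi}_\star(-)_{\Z/2}$ and identify the terms. Using the adjunction isomorphism $[S^\star, \Z/2_+ \wedge E]^{\Z/2} \cong [S^\star, E]^e = (E_\star)_e$ recalled in the excerpt, together with $[S^\star, S^\alpha \wedge E]^{\Z/2} \cong (E_{\star - \alpha})_{\Z/2}$, the long exact sequence reads
\[
\cdots \to (E_V)_e \to (E_V)_{\Z/2} \xrightarrow{\,\cdot a\,} (E_{V-\alpha})_{\Z/2} \to (E_{V-\alpha})_e \to \cdots,
\]
in which the middle map is multiplication by $a$ (post-composition with $a \wedge 1$). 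The two assertions of the lemma are then precisely the exactness of this sequence at the two consecutive spots $(E_V)_{\Z/2}$ and $(E_{V-\alpha})_{\Z/2}$, once the two neighbouring maps are recognized as the transfer and the restriction.

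The crux, and the step I expect to be the main obstacle, is exactly this identification. In the second (covariant) variable the adjunction presents $\tau$ as post-composition with the projection $p \wedge 1 : \Z/2_+ \wedge E \to E$ and $\rho$ as post-composition with its dual $t \wedge 1$; hence the map $(E_V)_e \to (E_V)_{\Z/2}$ in the sequence is the transfer, giving $\mathrm{Ker}(a) = \mathrm{Im}(\tau)$, which is statement (2). For the connecting map I would identify the Puppe map $S^\alpha \wedge E \to \Sigma(\Z/2_+ \wedge E)$: using $\Sigma \Z/2_+ \simeq \Z/2_+ \wedge S^\alpha$ (only the underlying dimension matters on an induced spectrum) it desuspends to the Spanier--Whitehead dual $t : S^0 \to \Z/2_+$ of $p$, so on homotopy it becomes the restriction $\rho$, giving $\mathrm{Im}(a) = \mathrm{Ker}(\rho)$, which is statement (1). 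The real work is in pinning down these two identifications — keeping the $\alpha$-twist bookkeeping straight and matching the topologically defined maps with the Mackey-functor maps $\rho, \tau$ of the excerpt; once they are in place the lemma is immediate from exactness. As a consistency check one can evaluate on $E = H\mf$ near the origin, where $\rho = \mathrm{id}$, $\tau = 0$ and $a \cdot 1 = a \neq 0$, so that $\mathrm{Ker}(\rho) = 0 = \mathrm{Im}(a)$ and $\mathrm{Ker}(a) = 0 = \mathrm{Im}(\tau)$ there.
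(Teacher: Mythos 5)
Your proof is correct, and it rests on the same input as the paper's: the distinguished triangle $\Z/2_+ \to S^0 \xrightarrow{a} S^{\alpha}$ and the adjunctions identifying $\rho$ and $\tau$ with (pre/post)composition by the projection $p$ and its Spanier--Whitehead dual $t$. The one genuine divergence is in how statement (1) is extracted. The paper never touches the connecting map: it applies the triangle \emph{twice}, once covariantly (applying $[S^{\star}, (-)\wedge E]^{\Z/2}$, which gives exactness at the spot $\mathrm{Im}(\tau)=\mathrm{Ker}(a)$ with both maps identified directly as $(p\wedge 1)_*$ and $(a\wedge 1)_*$), and once contravariantly (applying $[-,\Sigma^{-\star}E]^{\Z/2}$, where the two relevant maps are precomposition with $a$ and with $p$, the latter being manifestly the restriction), so each exactness statement only ever involves the two ``easy'' maps of the triangle. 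You instead run a single covariant long exact sequence one step further, which forces you to identify the Puppe map $S^{\alpha}\to \Sigma\Z/2_+\simeq \Sigma^{\alpha}\Z/2_+$ with the suspension of $t$ --- the step you correctly flag as the crux. That identification is true (it follows from the self-duality of the triangle under Spanier--Whitehead duality together with uniqueness of cofibers, and the residual ambiguity by a sign or an automorphism of $\Z/2_+$ is harmless since only kernels and images are at stake), so your argument closes; the paper's two-functor bookkeeping simply buys you exemption from proving it, at the cost of invoking the triangle in both variances.
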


\begin{proof}
 These are consequences of the existence of the long exact sequence associated to the distinguished triangle $$ \Z/2_+ \rightarrow S^0 \rightarrow S^{ \alpha}$$
in the stable $\Z/2$-equivariant category.
\begin{enumerate}
 \item Apply the exact functor $ [ -, \Sigma^{ - \star} E]^{\Z/2}$ to the triangle. We have:
$$ \xymatrix{  [S^{ \alpha}, \Sigma^{- \star} E]^{\Z/2} \ar[r] \ar@{ = }[d] & [S^0, \Sigma^{- \star} E]^{\Z/2} \ar[r] \ar@{ = }[d] & [ \Z/2_+, \Sigma^{- \star} E]^{\Z/2} \ar@{ = }[d] \\
\underline{ \pi}_{ \star+ \alpha}(E)_{\Z/2} \ar[r]^a & \underline{ \pi}_{ \star}(E)_{\Z/2} \ar[r]^{ \rho} & \underline{ \pi}_{ \star}(E)_e \\ }$$
where the rows are exact. The first point follows.
\item Apply the exact functor $ [ S^{ \star}, (-) \wedge E]_{e}$ to the triangle. We have:
$$ \xymatrix{  [S^{ \star}, \Z/2_+ \wedge E]^{\Z/2} \ar[r] \ar@{ = }[d] & [S^{ \star}, E]^{\Z/2} \ar[r] \ar@{ = }[d] & [ \Sigma^{ \star - \alpha}, E]^{\Z/2} \ar@{ = }[d] \\
\underline{ \pi}_{ \star}(E)_e \ar[r]^{ \tau} & \underline{ \pi}_{ \star}(E)_{\Z/2} \ar[r]^{a} & \underline{ \pi}_{ \star}(E)_{\Z/2} \\ }$$
where the rows are exact. The second point follows.
\end{enumerate}
\end{proof}

\section{Operations in $RO(\Z/2)$-graded cohomology} \label{sec:Operations}

\subsection{$RO(\Z/2)$-graded Hopf algebroids}

The role of this section is to set up the appropriate structure we need to deal with homology cooperations. These constructions are the $RO(\Z/2)$-graded analogue of the standard notions of Hopf algebroid, and the machinery considered by Boardman \cite{Bo95}.
We now define two monoidal structures on the category of $H$-bimodules.

\begin{de}
Let $H$ be a commutative $RO(\Z/2)$-graded ring.
\begin{enumerate}
\item Let $ \underset{(r,l)}{ \otimes} : H-Bimod \times H-Bimod \rightarrow H-Bimod$ be the tensor product over $H$, with respect to the right $H$-module structure on the first argument and the left $H$-module structure on the second. The unit for this structure is $H$ with the $H$-bimodule structure induced by the product on both sides.
\item Let $ \underset{(l,l)}{ \otimes} : H-Bimod \times H-Bimod \rightarrow H-Bimod$ be the tensor product over $H$, with respect to the left $H$-module structure on both arguments. The $H$-bimodule structure on $M \underset{(l,l)}{ \otimes} N$ is given by $h( m \otimes n) h' := (mh) \otimes (nh')$ for $h,h' \in H$ and $n\in N$, $M \in M$. Again, $H$ is the unit for this monoidal structure.
\item When $A$ is an $H$-bimodule and $M$ is a right $H$-module (resp. a left $H$-module), we can still define $M \underset{(r,l)}{\otimes} A$ (resp. $M \underset{(l,l)}{\otimes} A$, $A \underset{(r,l)}{\otimes} M$), as a right (resp. right, left) $H$-module.
\end{enumerate}

\end{de}

\begin{nota} \label{de_algebroidetc}
Let $H$ be a commutative $RO(\Z/2)$-graded ring.
\begin{enumerate}
\item $H-Alg$ is the category of monoids in $(H-mod, \otimes, H)$ and monoid morphisms,
\item $Alg$ is the category of $RO(\Z/2)$-graded algebra, that is $ \Z-Alg$ where $ \Z$ is concentrated in degree $0 \in RO( \Z/2)$. 
\item A $RO(\Z/2)$-graded Hopf algebroid $(H, A)$ is a cogroupoid object in $\Ab^{RO(\Z/2)}$.
\end{enumerate}
\end{nota}

Equivalently, a $RO(\Z/2)$-graded Hopf algebroid structure consists in the following data.

\begin{de}[~\emph{cf \cite[Definition A1.1.1]{Ra86}}]  \label{de_equivalgebroidetc}
Let $(H,A)$ be a pair of $RO(\Z/2)$-graded algebras. Denote by $ \mu$ the product of the algebra $A$.
A $RO( \Z/2)$-graded Hopf algebroid structure on $(H,A)$ consists on the following data
\begin{enumerate}
\item a left unit $ \eta_L : H \rightarrow A$,
\item a right unit $ \eta_R : H \rightarrow A$,
\item a coproduct $ \Delta : A \rightarrow A  \underset{(r,l)}{ \otimes} A$,
\item a counit $ \epsilon : A \rightarrow H$,
\item an antipode $ c : A \rightarrow A$,
\end{enumerate}
satisfying
\begin{enumerate}
\item $A$ has a $H$-bimodule structure, induced by $ \eta_L$ (for the left module structure) and  $ \eta_R$ (for the right module structure), and $ \Delta$ and $ \epsilon$ are $H$-bimodule morphisms,
\item the equalities \begin{itemize}
                    \item $ \epsilon \eta_L = \epsilon \eta_R = Id_{H}$
		    \item $( Id_{A} \underset{(r,l)}{ \otimes} \epsilon) \circ \Delta = ( \epsilon \underset{(r,l)}{ \otimes} Id_{A} ) \circ \Delta = Id_{A}$
   		    \item $( Id_{A} \underset{(r,l)}{ \otimes} \Delta) \circ \Delta = ( \Delta \underset{(r,l)}{ \otimes} Id_{A} ) \circ \Delta$
		    \item $c \eta_L = \eta_R$
		    \item $c \eta_R = \eta_L$
		    \item $c \circ c = Id_{A}$
                   \end{itemize}
are satisfied.
\item and there exists dotted arrows making the following diagram commute:
$$ \xymatrix@C=2.5cm{ A & \ar[l]_{ \mu \circ(c \otimes Id_A)} A \otimes_{ \mathbb{Z}} A \ar[d] \ar[r]^{ \mu \circ( Id_A \otimes c)} & A \\
& A  \underset{(r,l)}{ \otimes} A \ar@{-->}[ul]  \ar@{-->}[ur]  & \\
H \ar[uu]^{ \eta_R} & A \ar[l]_{ \epsilon} \ar[r]^{ \epsilon} \ar[u]^{ \Delta} & H \ar[uu]_{ \eta_L}} $$
\end{enumerate}
\end{de}

\begin{proof}[References for the proof:]
See \cite[Definition A1.1.1]{Ra86} and subsequent discussion (p.302) for this property in the category of $ \mathbb{N}$-graded  algebras over a ring $K$. The proof is, {\em mutatis mutandis}, the same in the $RO(\Z/2)$-graded context.
\end{proof}

We have all the usual notions attached to Hopf algebroids in the $RO(\Z/2)$-graded context, and in particular:

\begin{de} \label{de_primitifalghopf}
Let $(H,A)$ be a $RO(\Z/2)$-graded Hopf algebroid, we say that $x \in A$ is primitive if $ \Delta(x) = 1 \otimes_H x + x \otimes_H 1$.
\end{de}

\subsection{Duality between modules and comodules {\em à la Boardman}}  \label{subsection_otimesroggrad}

Following Boardman \cite{Bo95}, we now study a form of duality between modules and comodules which mimics the duality between cohomological operations and homological cooperations in the non-equivariant context.

\begin{de}[\emph{\cite[definitions appearing in \S 10 and Definition 11.11]{Bo95}}] \label{de_modulecomoduleboard}
 $\ $ \\
\begin{enumerate}
\item Let $A^{ \star}$ be an $H$-bimodule and a $RO(\Z/2)$-graded ring. An $A^{ \star}$-module {\em à la Boardman} is an $RO(\Z/2)$-graded filtered complete Hausdorff $H$-module $M$, together with a continuous $H$-module morphism $$ \lambda : A^{ \star} \underset{(r,l)}{ \otimes} M \rightarrow M$$ with the usual commutative diagrams defining a module over a ring.
\item Let $A_{ \star}$ be an $RO( \Z/2)$-graded Hopf algebroid. An $A_{ \star}$-comodule {\em à la Boardman}  is an $RO(\Z/2)$-graded filtered complete Hausdorff $H$-module $M$, together with a continuous $H$-module morphism $$ \rho : M \rightarrow M \hat{ \underset{(l,l)}{ \otimes}} A_{ \star},$$ where the action of $H$ on $M \hat{ \underset{(l,l)}{ \otimes}} A_{ \star}$ is given by $h ( m \otimes s) = m \otimes \eta_R(h)s$, for $m \otimes s \in M \hat{ \underset{(l,l)}{ \otimes}} A_{ \star}$ and $h \in H$.
\end{enumerate}
\end{de}

\begin{rk}
The difference between these notions and the more naive notions of module and comodule (see Definition \ref{de_algebresrogr}) is the completion of the tensor product ( $M \hat{ \underset{(l,l)}{ \otimes}} A_{ \star}$).
\end{rk}

The importance of these notions for us appears in the following extension to the $RO(\Z/2)$-graded context of a result of Boardman \cite{Bo95}.

\begin{pro}[\emph{\cite[Theorem 11.13]{Bo95}}] \label{pro_thmboardman}
Let $A_{ \star}$ be a $RO( \Z/2)$-graded Hopf algebroid, and suppose that $A_{ \star}$ is a free left $H$-module. Denote by $A^{ \star} = \Hom_{H-Mod}(A_{ \star}, H)$. Then, there is an equivalence of categories between the category of $A^{ \star}$-modules and the category of $A_{ \star}$-comodules.
\end{pro}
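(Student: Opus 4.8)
The plan is to exhibit explicit functors in both directions and to check that they are mutually quasi-inverse, following Boardman's argument but carrying the $RO(\Z/2)$-grading throughout. The freeness hypothesis on $A_{\star}$ enters precisely when reconstructing a coaction from an action, since it is what guarantees a topological duality between $A_{\star}$ and $A^{\star}$; everything else is a dualisation of the structure maps of the Hopf algebroid.

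First I would construct the functor $\Phi$ from $A_{\star}$-comodules to $A^{\star}$-modules. Given a comodule $(M, \rho)$ with $\rho : M \rightarrow M \hat{ \underset{(l,l)}{ \otimes}} A_{\star}$, define the action of $\phi \in A^{\star} = \Hom_{H-Mod}(A_{\star}, H)$ on $m \in M$ by
$$\phi \cdot m := (\mathrm{id}_M \hat{\otimes}\, \phi)\bigl(\rho(m)\bigr),$$
using that $\phi$ is an $H$-module map $A_{\star} \to H$ and that $H$ acts on $M$. Unitality of the resulting action $\lambda$ is exactly the counit axiom for $\rho$, the counit $\epsilon$ corresponding to the unit $1 \in A^{\star}$, while associativity $\phi \cdot (\psi \cdot m) = (\phi\psi) \cdot m$ is precisely coassociativity of $\rho$, once one observes that the convolution product on $A^{\star}$ is dual to the coproduct $\Delta$ under the evaluation pairing $A^{\star} \underset{(r,l)}{\otimes} A_{\star} \rightarrow H$. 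Continuity of $\lambda$ with respect to the filtration is inherited from the continuity built into the definition of $\rho$, and functoriality in $M$ is routine.

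Conversely, to build $\Psi$ from $A^{\star}$-modules to $A_{\star}$-comodules I would invoke freeness. Fix a homogeneous $H$-basis $\{e_i\}$ of the free left $H$-module $A_{\star}$ and let $\{e_i^{*}\} \subset A^{\star}$ be the dual family. Given an $A^{\star}$-module $(M, \lambda)$, set
$$\rho(m) := \sum_i (e_i^{*} \cdot m) \otimes e_i,$$
the sum taken in the completed tensor product $M \hat{ \underset{(l,l)}{ \otimes}} A_{\star}$. One checks that this is independent of the chosen basis, since it agrees with the canonical element of $M \hat{ \underset{(l,l)}{ \otimes}} A_{\star}$ determined by $\lambda$ under the duality, that compatibility with the $H$-bimodule structures holds — here the action of $H$ through $\eta_R$ on the target is what matches the $(r,l)$-module structure carried by $A^{\star}$ — and that the coaction axioms follow from the module axioms by dualising, again via the identification of $\Delta$ with the product on $A^{\star}$.

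The main obstacle is the topological bookkeeping: one must verify that $\rho(m)$ genuinely lands in the completion $M \hat{ \underset{(l,l)}{ \otimes}} A_{\star}$ rather than being a merely formal infinite sum, and that $\Phi$ and $\Psi$ are mutually inverse. Both points rest on the fact that, because $A_{\star}$ is a free $H$-module, evaluation identifies $A^{\star}$ with the topological $H$-linear dual of $A_{\star}$ and identifies $M \hat{ \underset{(l,l)}{ \otimes}} A_{\star}$ with the module of continuous $H$-homomorphisms out of $A^{\star}$; the Hausdorff-complete filtration hypothesis on $M$ then forces the round-trip composites $\Psi\Phi$ and $\Phi\Psi$ to be the identity. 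Since every construction above is performed degreewise in $RO(\Z/2)$ using the symmetric monoidal structure set up in the preliminaries, no new phenomenon arises relative to the $\Z$-graded case treated by Boardman, and the argument transcribes essentially verbatim.
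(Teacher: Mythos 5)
Your proposal is correct and follows exactly the route of Boardman's Theorem 11.13, which is all the paper does here: it states the result as a citation and offers no independent proof, relying on the observation that the argument transcribes degreewise to the $RO(\Z/2)$-graded setting. You correctly locate the two essential points --- freeness of $A_{\star}$ giving the identification of $M \hat{\underset{(l,l)}{\otimes}} A_{\star}$ with continuous homomorphisms out of $A^{\star}$, and the complete Hausdorff filtration making the adjoint coaction converge --- so nothing further is needed.
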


\subsection{Hopf algebroid structure on cooperations in homology with respect to a flat commutative ring spectrum}

\begin{de}
Let $E$ be a commutative ring $ \Z/2$-spectrum. We say that $E$ is flat if the $E_{\star}^{\Z/2}$-module $E_{\star}^{\Z/2}E$ is flat.
\end{de}

\begin{pro}
Let $E$ be a flat commutative ring $\Z/2$-spectrum. Then the unit $\eta : S^0 \rightarrow E$ and multiplication $\mu : E \wedge E \rightarrow E$ induce an $RO(\Z/2)$-graded Hopf algebroid structure on the pair $(E_{\star}^{\Z/2}, E_{\star}^{\Z/2}E)$.
Moreover, this Hopf algebroid is flat.

For any $\Z/2$-spectrum $X$, $E^{\star}_{\Z/2}(X)$ with the topology induced by the skeletal filtration is a comodule {\em à la Boardman} over this $RO(\Z/2)$-graded Hopf algebroid.
\end{pro}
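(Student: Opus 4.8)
The plan is to transport the classical construction of Boardman \cite{Bo95} and Ravenel \cite{Ra86} to the $RO(\Z/2)$-graded setting, the whole argument being governed by a single \emph{Künneth isomorphism}. Write $H = E_{\star}^{\Z/2}$ and $A = E_{\star}^{\Z/2}E = [S^{\star}, E \wedge E]^{\Z/2}$. The linchpin is the claim that, for $E$ flat, the product map induced by the multiplication on the middle smash factor of $1 \wedge \eta \wedge 1 : E \wedge E \rightarrow E \wedge E \wedge E$ identifies
$$
[S^{\star}, E \wedge E \wedge E]^{\Z/2} \cong A \underset{(r,l)}{ \otimes} A,
$$
and, more generally, $[S^{\star}, (E \wedge E) \wedge Y]^{\Z/2} \cong A \underset{(r,l)}{ \otimes} [S^{\star}, E \wedge Y]^{\Z/2}$ naturally in the $\Z/2$-spectrum $Y$. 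Granting this, everything else is formal.

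First I would establish the Künneth isomorphism by comparison of homology theories. Both functors $Y \mapsto [S^{\star}, E \wedge E \wedge Y]^{\Z/2}$ and $Y \mapsto A \underset{(r,l)}{ \otimes} [S^{\star}, E \wedge Y]^{\Z/2}$ are $RO(\Z/2)$-graded homology theories on $\Z/2\sh$: the first manifestly, the second because flatness of $A$ over $H$ makes $A \underset{(r,l)}{ \otimes} (-)$ exact and coproduct-preserving. The product map furnishes a natural transformation between them which is an isomorphism on each representation sphere $S^V$ (both sides being $\Sigma^V A$) and on each induced cell $\Z/2_+ \wedge S^V$; since these cells generate $\Z/2\sh$, the transformation is an isomorphism for all $Y$, in particular for $Y = E$. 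With the Künneth isomorphism in hand I would define the structure maps exactly as in the non-equivariant case: $\eta_L, \eta_R : H \rightarrow A$ from $\eta \wedge 1$ and $1 \wedge \eta$, the counit $\epsilon : A \rightarrow H$ from $\mu$, the antipode $c : A \rightarrow A$ from the switch map, and the coproduct $\Delta : A \rightarrow A \underset{(r,l)}{ \otimes} A$ from $1 \wedge \eta \wedge 1$ followed by the Künneth isomorphism. The axioms of definition \ref{de_equivalgebroidetc} then follow formally from the associativity and unitality of $\mu$ together with the commutativity of $E$; the only equivariant bookkeeping is compatibility with the sign $(-1)^{\dim(V)\dim(W)}$ in the twist $\tau$, which holds precisely because $E$ is commutative. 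Flatness of the resulting Hopf algebroid is exactly the flatness hypothesis on $E$, the left- and right-module flatness being interchanged by $c$.

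For the comodule structure on $E^{\star}_{\Z/2}(X)$ I would produce the coaction as the composite of the map $(\eta \wedge 1)_{*} : E^{\star}_{\Z/2}(X) \rightarrow (E \wedge E)^{\star}_{\Z/2}(X)$ with a completed Künneth isomorphism $(E \wedge E)^{\star}_{\Z/2}(X) \cong E^{\star}_{\Z/2}(X) \hat{ \underset{(l,l)}{ \otimes}} A$. The completion is forced because cohomology satisfies Künneth only after passing to the limit over the skeletal filtration: for each finite subspectrum $X_{\alpha}$ of $X$, dualizability together with flatness give an honest isomorphism $(E \wedge E)^{\star}_{\Z/2}(X_{\alpha}) \cong E^{\star}_{\Z/2}(X_{\alpha}) \underset{(l,l)}{ \otimes} A$, and taking the inverse limit over $\alpha$ yields the completed tensor product of definition \ref{de_modulecomoduleboard}. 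Completeness, Hausdorffness and continuity of the coaction are then read off from the skeletal filtration, and the comodule axioms follow from the Hopf algebroid axioms already established; this is exactly the setting in which proposition \ref{pro_thmboardman} will later be applied.

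The step I expect to be the main obstacle is the Künneth isomorphism, specifically verifying the comparison of homology theories on the induced cells $\Z/2_+ \wedge S^V$: there one must compare the underlying cooperations $[S^{\star}, E \wedge E]^{e}$ with $A \underset{(r,l)}{ \otimes} [S^{\star}, E]^{e}$, so some care is needed to see that flatness over $E_{\star}^{\Z/2}$ propagates to the underlying level, equivalently that base change along the restriction $H \rightarrow E^{e}_{\star}$ behaves well. The second delicate point is purely homotopy-theoretic: controlling the $\lim^{1}$ terms attached to the skeletal filtration so that the completed Künneth isomorphism in cohomology is genuinely an isomorphism of filtered complete Hausdorff $H$-modules, and not merely an isomorphism of the underlying graded modules.
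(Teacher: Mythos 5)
Your proposal is correct and is essentially a detailed unwinding of the paper's own proof, which consists of citing the classical constructions (Adams ch.~12--13, Ravenel \S 2.2, Boardman Thm.~4.2) and asserting that they extend verbatim to the $RO(\Z/2)$-graded setting, with the same Künneth-isomorphism-plus-formal-structure-maps skeleton you describe. The one point where you over-worry: no base-change argument along $H \rightarrow E^{e}_{\star}$ is needed for the induced cells, since the cofiber sequence $\Z/2_+ \rightarrow S^0 \rightarrow S^{\alpha}$ exhibits $\Z/2_+$ as lying in the thick subcategory generated by the representation spheres $S^V$ (this is exactly where the complete universe is used), so verifying the comparison of homology theories on the $S^V$ alone already forces it on $\Z/2_+\wedge S^V$ and hence on all of $\Z/2\sh$.
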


\begin{proof}
The constructions of \cite[ Chapter 12 and 13]{Ad74}, \cite[Section 2.2]{Ra86} extend to the $RO(\Z/2)$-graded setting. The comodule structure is given by a $RO(\Z/2)$-graded analogue of \cite[Theorem 4.2]{Bo95}.
\end{proof}

Thus, Theorem \ref{de_modulecomoduleboard} holds in this setting, and provides an equivalence between the category of $E_{\star}^{\Z/2}E$-comodules {\em à la Boardman} and the category of $Hom_{E_{\star}^{\Z/2}}( E_{\star}^{\Z/2}E, E_{\star}^{\Z/2})$-modules. We want to compare this $Hom_{E_{\star}^{\Z/2}}( E_{\star}^{\Z/2}E, E_{\star}^{\Z/2})$-module structure to the more familiar $E^{\star}_{\Z/2}E$-module structure on $E^{\star}$-cohomology.

In general, the relationship between stable cohomological operations, given by $[E,E]^{ \star}_{\Z/2}$ and cooperations, given by $(E_{\star}^{\Z/2}, E_{\star}^{\Z/2}E)$ is delicate. In order to obtain an isomorphism 

$$Hom_{E_{\star}^{\Z/2}}( E_{\star}^{\Z/2}E, E_{\star}^{\Z/2}) \cong E^{ \star}_{\Z/2}E$$

 we need the following additional hypothesis on $E$ :
 \begin{hypo} \label{hypothese_libre} The $RO(\Z/2)$-graded Mackey functor $E_{ \star} E$ is isomorphic to a direct sum $ \bigoplus_{i \in I} \Sigma^{V_i} E_{ \star}$ as a $ E_{ \star}$-module for some $V_i \in RO(\Z/2)$.   \end{hypo}

Generally, for a commutative ring $\Z/2$-spectrum $E$, freeness of an $E$-module can be understood in purely algebraic terms.

\begin{pro} \label{pro_liberteemod}
 Let $X$ be a $E$-module such that $\underline{\pi}_{ \star}X$ is isomorphic (as an $RO(\Z/2)$-graded Mackey functor) to $ \bigoplus_{i \in I} \Sigma^{V_i} E_{ \star}$, for some $V_i \in RO( \Z/2)$.
Then, there exists an $E$-module weak equivalence $ \bigvee_{i \in I} \Sigma^{V_i} E \simeq X $.
\end{pro}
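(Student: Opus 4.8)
The plan is to realize the assumed freeness of $\underline{\pi}_{\star}X$ by an honest $E$-module map out of the wedge, and then to check that this map is a $\underline{\pi}_{\star}$-isomorphism. First I would fix the isomorphism $\bigoplus_{i\in I}\Sigma^{V_i}E_{\star}\xrightarrow{\cong}\underline{\pi}_{\star}X$ of $RO(\Z/2)$-graded $E_{\star}$-modules (in Mackey functors) supplied by the hypothesis. Each summand $\Sigma^{V_i}E_{\star}$ is free of rank one on a generator sitting in degree $V_i$ at the $\Z/2$-fixed level; transporting these canonical generators along the isomorphism produces elements
$$x_i\in(\underline{\pi}_{V_i}X)_{\Z/2}=[S^{V_i},X]^{\Z/2},$$
that is, genuine $\Z/2$-equivariant maps $x_i\colon S^{V_i}\to X$. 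Since $X$ is an $E$-module and $E\wedge(-)$ is left adjoint to the forgetful functor from $E$-modules to $\Z/2$-spectra, each $x_i$ extends uniquely, in the homotopy category, to an $E$-module map $\tilde{x}_i\colon\Sigma^{V_i}E\to X$ carrying the free generator of $\underline{\pi}_{\star}(\Sigma^{V_i}E)=\Sigma^{V_i}E_{\star}$ to $x_i$. Assembling these over $i$ gives the $E$-module map
$$\phi\colon\bigvee_{i\in I}\Sigma^{V_i}E\longrightarrow X.$$

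It then remains to verify that $\phi$ is a weak equivalence, for which I would show it induces an isomorphism on $\underline{\pi}_{\star}$. Because each representation sphere $S^V$ is compact (indeed invertible, hence dualizable) in $\Z/2\sh$, the functors $[S^V,-]^{\Z/2}$ and $[S^V,-]^e$ commute with arbitrary coproducts, so $\underline{\pi}_{\star}$ carries the wedge to the direct sum and $\underline{\pi}_{\star}(\bigvee_{i}\Sigma^{V_i}E)\cong\bigoplus_{i}\Sigma^{V_i}E_{\star}$. By construction $\phi$ sends the generator of the $i$-th summand to $x_i$, and since every $\tilde{x}_i$ is $E_{\star}$-linear, the induced map on $\underline{\pi}_{\star}$ is exactly the $E_{\star}$-module isomorphism fixed at the start. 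Restricting to integer gradings, $\phi$ induces an isomorphism on $\underline{\pi}_n$ for every $n\in\Z$, hence on both $\pi_n^e$ and $\pi_n^{\Z/2}$ for all $n$, and the equivariant Whitehead theorem then shows $\phi$ is a weak equivalence of $\Z/2$-spectra, and therefore of $E$-modules.

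The one point requiring genuine care, rather than formal manipulation, is the identification of the correct generators: the hypothesis is a statement about $E_{\star}$-modules in $RO(\Z/2)$-graded Mackey functors, and one must confirm that the free generator of each $\Sigma^{V_i}E_{\star}$ lives at the $\Z/2$-fixed level $(\underline{\pi}_{V_i}X)_{\Z/2}$ — so that it is represented by an equivariant map $S^{V_i}\to X$ admitting an $E$-module lift — and not merely in the underlying level $(\underline{\pi}_{V_i}X)_e$. Granting this, together with the compactness needed to commute $\underline{\pi}_{\star}$ past the possibly infinite coproduct, the remainder is the standard ``construct the comparison map by hand and compare homotopy'' argument, with the $E_{\star}$-linearity of the $\tilde{x}_i$ doing the rest.
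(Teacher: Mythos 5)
Your proposal is correct and follows essentially the same route as the paper: choose the basis elements $b_i$ as equivariant maps $S^{V_i}\to X$, extend them to an $E$-module map out of $\bigvee_i\Sigma^{V_i}E$ via the free--forgetful adjunction, and observe that this map is an isomorphism on homotopy Mackey functors, hence a weak equivalence. Your write-up is simply more explicit than the paper's about the two points it leaves implicit (that the generators live at the $\Z/2$-fixed level, and that compactness of representation spheres lets $\underline{\pi}_{\star}$ commute with the coproduct).
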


\begin{proof}
Let $ \{b_i \}_{i \in I}$ be a basis of $E_{ \star}X$ as a $E_{ \star}$-module, with $ | b_i| = V_i$.
Consider $ \bigvee_{i \in I} S^{V_i} \stackrel{ \vee b_i}{ \rightarrow} X$. This provides a $E$-module morphism
$$ \bigvee_{i \in I} \Sigma^{ deg(b_i)}E \rightarrow X.$$
This $ \Z/2$-spectrum map induces an isomorphism in homotopy Mackey functors, and thus is a weak equivalence.
\end{proof}

\begin{pro} \label{pro_dualite_operationcoop}
Suppose that $E$ satisfies Hypothesis \ref{hypothese_libre}. Then, there is a $RO(\Z/2)$-graded ring and $E_{ \star}^{\Z/2}$-module isomorphism:
\begin{align} Hom_{E_{\star}^{\Z/2}}( E_{\star}^{\Z/2}E, E_{\star}^{\Z/2}) \cong E^{ \star}_{\Z/2}E \label{dualiteoperationcooperation}  \end{align} 
where the ring structures are
\begin{enumerate}
\item induced by composition of morphisms $E \rightarrow E$ for $E^{ \star}_{\Z/2}E = \pi_{ \star}( End(E))_{\Z/2}$
\item given by the composite $ \Hom_{E_{\star}^{\Z/2}}(E_{\star}^{\Z/2}E,E_{\star}^{\Z/2}) \otimes_{E_{ \star}^{\Z/2}}  \Hom_{E_{\star}^{\Z/2}}(E_{\star}^{\Z/2}E,E_{\star}^{\Z/2}) \rightarrow  \Hom_{E_{\star}^{\Z/2}}(E_{\star}^{\Z/2}E \otimes_{E_{ \star}^{\Z/2}} (E_{\star}^{\Z/2}E, E_{\star}^{\Z/2}) \stackrel{ \Delta^*}{ \rightarrow} \Hom_{E_{\star}^{\Z/2}}(E_{\star}^{\Z/2}E, E_{\star}^{\Z/2}).$
\end{enumerate}
\end{pro}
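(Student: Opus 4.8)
The plan is to build the isomorphism \eqref{dualiteoperationcooperation} by combining the freeness hypothesis \ref{hypothese_libre} with Proposition \ref{pro_liberteemod} and the duality theory of Proposition \ref{pro_thmboardman}, and then to check separately that it respects the two ring structures. First I would observe that hypothesis \ref{hypothese_libre} says precisely that $E_{\star}E$ is free as an $E_{\star}$-module; by Proposition \ref{pro_liberteemod} this freeness is realized topologically by an $E$-module equivalence $\bigvee_{i \in I} \Sigma^{V_i} E \simeq E \wedge E$. Dualizing, the cohomology $E^{\star}_{\Z/2}E = \pi_{\star}(\mathrm{End}(E))_{\Z/2}$ is computed as a product $\prod_{i \in I} \Sigma^{-V_i} E^{\star}_{\Z/2}$, and this matches the $E_{\star}^{\Z/2}$-module $\Hom_{E_{\star}^{\Z/2}}(E_{\star}^{\Z/2}E, E_{\star}^{\Z/2})$, which on a free module is likewise a product over the dual basis. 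This gives the $E_{\star}^{\Z/2}$-module isomorphism.

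The cleaner route, which I would actually follow, is to invoke the Boardman equivalence of Proposition \ref{pro_thmboardman} directly: since $E_{\star}E$ is free, it applies with $A_{\star} = E_{\star}^{\Z/2}E$ and identifies the category of $A_{\star}$-comodules {\em \`a la Boardman} with the category of $\Hom_{E_{\star}^{\Z/2}}(E_{\star}^{\Z/2}E, E_{\star}^{\Z/2})$-modules. We already know that $E^{\star}_{\Z/2}(X)$, with its skeletal topology, is a comodule {\em \`a la Boardman} over $(E_{\star}^{\Z/2}, E_{\star}^{\Z/2}E)$ for every $X$, and in particular for $X = E$. Transporting this comodule across the equivalence produces a canonical $\Hom_{E_{\star}^{\Z/2}}(E_{\star}^{\Z/2}E, E_{\star}^{\Z/2})$-module structure on $E^{\star}_{\Z/2}E$. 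The underlying $E_{\star}^{\Z/2}$-modules agree by the freeness computation above, so the action map $\lambda$ is exactly the adjoint of the coaction $\rho$, and the asserted isomorphism of modules follows formally from the adjunction built into Boardman's theorem.

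It then remains to identify the two ring structures, and this is where I expect the real work to lie. The right-hand ring structure on $E^{\star}_{\Z/2}E = \pi_{\star}(\mathrm{End}(E))_{\Z/2}$ is composition of self-maps of the spectrum $E$; the left-hand structure is the convolution product dual to the coproduct $\Delta$, spelled out as the composite in item (2) of the statement. To match them I would trace a cohomology operation $\phi \colon E \to \Sigma^{V} E$ to the functional $b \mapsto \phi_*(b)$ on $E_{\star}^{\Z/2}E$, and check that the composite $\phi \circ \psi$ corresponds, under this correspondence, to the convolution of the two functionals. This is the standard Milnor--Moore/Boardman duality computation relating composition of operations to the coalgebra diagonal; the point is that the diagonal $\Delta$ on $E_{\star}^{\Z/2}E$ is induced by $E \wedge E \to E \wedge E \wedge E$, and precomposing a functional with $\Delta$ reproduces the effect of composing the corresponding operations. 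The main obstacle is bookkeeping in the $RO(\Z/2)$-graded and Mackey-functor setting: one must keep careful track of the left/right $H$-module conventions (the $\underset{(r,l)}{\otimes}$ versus $\underset{(l,l)}{\otimes}$ distinction, and the twisting of the $H$-action by $\eta_R$ appearing in Definition \ref{de_modulecomoduleboard}), and verify that the completions in the comodule structure are compatible with the $\Hom$ on the module side. Once these conventions are pinned down, the verification that $\Delta^{*}$ computes composition is formally the same as in \cite{Bo95}, so I would carry it out by reducing, via Proposition \ref{pro_liberteemod}, to the free case where both products are given explicitly on dual bases and the identification is a direct check.
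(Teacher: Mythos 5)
Your first and third paragraphs reproduce the paper's argument: the proof in the text chooses a basis $\{b_i\}$ of $E_{\star}^{\Z/2}E$, uses the resulting splitting $E\wedge E\simeq\bigvee_i\Sigma^{\deg(b_i)}E$ to write down an explicit inverse $\pi^{-1}$ sending a functional $f:b_i\mapsto e_i$ to the composite $\bigvee_i\Sigma^{\deg(b_i)}E\xrightarrow{\vee e_i\wedge E}E\wedge E\xrightarrow{\mu}E$, and then cites Boardman's diagram 4.18 for the compatibility of the two products --- exactly your explicit dual-basis route. One caveat: the ``cleaner route'' of your middle paragraph does not actually establish the statement and you should drop it. Transporting the comodule $E^{\star}_{\Z/2}E$ across the equivalence of Proposition \ref{pro_thmboardman} only endows it with a \emph{module structure} over $\Hom_{E_{\star}^{\Z/2}}(E_{\star}^{\Z/2}E,E_{\star}^{\Z/2})$; it does not produce an isomorphism between that Hom-object and $E^{\star}_{\Z/2}E$, and in any case the point of the present proposition is precisely to identify the abstract dual $A^{\star}$ appearing in Boardman's theorem with the topologically defined operation ring, so invoking that theorem here is circular. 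With that paragraph removed, your argument is the paper's.
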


\begin{proof}[\emph{ \textit{Proof} (indications, see \cite[proof of Theorem 9.25]{Bo95})}]
As $E$ satisfies Hypothesis \ref{hypothese_libre}, choose a basis $ \{b_i \}_{i \in I}$ of the $E_{ \star}^{\Z/2}$-module $E_{ \star}^{\Z/2}E$, so that there is a weak equivalence $E \wedge E \simeq \bigvee_i \Sigma^{ deg(b_i)}E$. Then, the map

\begin{eqnarray*}
\pi^{-1} : \Hom_{E_{ \star}^{\Z/2}}(E_{ \star}^{\Z/2}E, E_{ \star}^{\Z/2}) & \rightarrow &  \pi_{ \star} \Hom_{E-Mod}(E \wedge E, E) \\ & \cong & \pi_{ \star}( Hom(E,E)) = E^{ \star}_{\Z/2}E
\end{eqnarray*}

defined for $f : b_i \mapsto e_i \in E_{ \star}$ by the formula
 $$ \pi^{-1}(f) : \xymatrix{  \bigvee_i \Sigma^{ deg(b_i)}E \ar[r]^{ \quad \ \vee e_i \wedge E} &  E \wedge E \ar[r]^{ \quad \mu} & E }$$ 
provides an inverse isomorphism to the map $$ \pi_{ \star} : E^{ \star}_{\Z/2}E \cong \pi_{ \star} \Hom_{E-Mod}(E \wedge E, E) \rightarrow \Hom_{E_{ \star}^{\Z/2}}(E_{ \star}^{\Z/2}E, E_{ \star}^{\Z/2}).$$ 

Commutativity of the diagram \cite[Diagram 4.18]{Bo95} implies that this isomorphism is a ring isomorphism.
\end{proof}

\begin{corr} \label{corr_primindec}
Let $E$ be a ring $ \Z/2$-spectrum satisfying Hypothesis \ref{hypothese_libre}. Then, by the isomorphism provided by Proposition \ref{pro_dualite_operationcoop}, a stable $E^{\star}$-cohomology operation $x \in E^{ \star}_{\Z/2}E$ is indecomposable if and only if $x^{ \vee} \in E_{ \star}^{\Z/2}E$ is primitive.
\end{corr}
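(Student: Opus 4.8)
The plan is to deduce the corollary entirely from the ring isomorphism of Proposition \ref{pro_dualite_operationcoop}. Write $A^{\star} := E^{\star}_{\Z/2}E$, $A_{\star} := E_{\star}^{\Z/2}E$ and $H := E_{\star}^{\Z/2}$. The content of part (2) of that proposition is that, under the evaluation pairing $\langle -,-\rangle \colon A^{\star}\otimes_H A_{\star}\to H$, the composition product on $A^{\star}$ is the $H$-linear dual of the coproduct $\Delta$ on $A_{\star}$; explicitly $\langle y\cdot z, a\rangle = \langle y\otimes z, \Delta(a)\rangle$ for all $y,z\in A^{\star}$ and $a\in A_{\star}$. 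Hypothesis \ref{hypothese_libre} lets me fix an $H$-basis $\{b_i\}$ of $A_{\star}$ together with its dual family $\{b_i^{\vee}\}$ in $A^{\star}$, so that this pairing is perfect and provides the correspondence $x\leftrightarrow x^{\vee}$. The whole statement is then the classical duality between a product and a coproduct, and the proof is a matter of transporting the augmentation ideal and the reduced coproduct across $\langle -,-\rangle$.

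First I would identify the augmentation ideal $\overline{A^{\star}}=\ker(\epsilon_{A^{\star}})$ with the annihilator of the unit $1\in A_{\star}$, i.e. $y\in\overline{A^{\star}}$ iff $\langle y,1\rangle=0$. Decomposing $\Delta(a)=1\otimes a + a\otimes 1 + \overline{\Delta}(a)$ with $\overline{\Delta}$ the reduced coproduct — so that, by Definition \ref{de_primitifalghopf}, $x^{\vee}$ is primitive exactly when $\overline{\Delta}(x^{\vee})=0$ — the adjunction above gives, for $y,z\in\overline{A^{\star}}$, the identity $\langle y\cdot z, a\rangle=\langle y\otimes z,\overline{\Delta}(a)\rangle$, since the two outer summands of $\Delta(a)$ are killed by $\langle y,1\rangle=\langle z,1\rangle=0$. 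Writing $D:=\overline{A^{\star}}\cdot\overline{A^{\star}}$ for the decomposables, this computes the annihilator $D^{\perp}\subseteq A_{\star}$ to be precisely the primitives: an element of $A_{\star}$ pairs to zero with every product of augmentation-ideal operations if and only if its reduced coproduct vanishes. In particular $x^{\vee}$ is primitive exactly when $x^{\vee}$ annihilates all decomposable operations.

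It then remains to match the condition $x^{\vee}\in D^{\perp}$ with the indecomposability of $x$. For this I would verify that the perfect pairing $\langle -,-\rangle$ descends to a perfect pairing between the indecomposables $Q(A^{\star})=\overline{A^{\star}}/D$ and the primitives $P(A_{\star})=D^{\perp}$; under the correspondence $x\leftrightarrow x^{\vee}$ this identifies the class of $x$ in $Q(A^{\star})$ with the primitive element $x^{\vee}$, so that $x$ has nonzero image in $Q(A^{\star})$ — i.e. is indecomposable — precisely when $x^{\vee}$ is a (nonzero) primitive. I expect the main obstacle to be exactly this last perfectness claim: in contrast with the non-equivariant case, $H=(H\mf_{\star})_{\Z/2}$ is very far from a field, so descending a perfect pairing to the subquotient $Q(A^{\star})$ and the submodule $P(A_{\star})$ requires genuine control of the $H$-module structure, which is precisely where hypothesis \ref{hypothese_libre} must be used. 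One must also keep track of the completion and topology built into the Boardman framework, so that $D^{\perp}$ and its annihilator behave as expected, and be careful about the distinction between the left and right units $\eta_L,\eta_R$ when expressing $\overline{\Delta}$ over $H$. Once the pairing is known to be perfect on these subquotients, the biconditional is formal.
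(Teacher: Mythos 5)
The paper offers no proof of this corollary at all --- it is stated as an immediate consequence of the product/coproduct adjunction built into Proposition \ref{pro_dualite_operationcoop} --- and your argument is exactly the intended one: under the evaluation pairing, made perfect by the freeness in hypothesis \ref{hypothese_libre}, the product of augmentation-ideal operations is adjoint to the reduced coproduct, so the primitives are precisely the annihilator of the decomposables. The worry you flag at the end about descending perfectness to $Q(A^{\star})$ and $P(A_{\star})$ over the non-field $H$ is legitimate (and is more care than the paper itself takes); since $x\leftrightarrow x^{\vee}$ is the dual-basis correspondence attached to the basis $\{b_i\}$ of hypothesis \ref{hypothese_libre}, the relation $\langle x,x^{\vee}\rangle=1$ together with $P=D^{\perp}$ already gives the implication from primitivity to indecomposability formally, and the converse is exactly where the degreewise finite-type, free-module bookkeeping you describe must be carried out.
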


\section{The $\Z/2$-equivariant Steenrod algebra and duality} \label{sec:Steenrod}

We now turn to the study of the central object of this paper: the $\Z/2$-equivariant Steenrod algebra.

\begin{de}
Let $\ste^{\star} = H\mf^{\star}_{\Z/2}H\mf$ be the algebra of stable $H\mf$-cohomology operations.
\end{de}

Recall that Hu and Kriz  \cite{HK01} computed a presentation of the $\Z/2$-equivariant modulo $2$ dual Steenrod algebra
 $$\ste_{\star} := H\mf_{\star}^{\Z/2}H\mf = H\mf_{\star}^{\Z/2}[\xi_{i+1},\tau_i|i\geq0]/I,$$
 where $I$ is the ideal generated by the relation $\tau_i^2 = a\xi_{i+1}+ (a\tau_0+\sigma^{-1})\tau_{i+1}$, and $\sigma^{-1}$ is the class defined in Notation \ref{nota:sigma}. From this, we deduce the following result.

\begin{pro} \label{pro_hypolibrepourhfd}
 The  $ H \mf_{ \star}^{\Z/2}$-module $H \mf_{ \star}^{\Z/2} H \mf$ is free over $$ \mathcal{B}_m := \{ \Pi_{i,j} \tau_i^{ \epsilon_i} \xi_{j}^{n(j)}, n(j) \in \mathbb{N}, \epsilon(i) \in \{ 0,1 \} \}.$$ We call $\mathcal{B}_m$ the \textit{monomial basis} of $H \mf_{ \star}^{\Z/2} H \mf$.
\end{pro}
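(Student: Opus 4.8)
The plan is to lift the Hu--Kriz presentation $\ste_\star = R[\xi_{i+1},\tau_i\mid i\ge 0]/I$, where $R := H\mf_\star^{\Z/2}$ and $I$ is generated by the elements $r_i := \tau_i^2 - a\xi_{i+1} - (a\tau_0+\sigma^{-1})\tau_{i+1}$, to a statement about free $R$-modules. The ambient polynomial algebra $P := R[\xi_{i+1},\tau_i\mid i\ge 0]$ is by construction free over $R$ on the set of all monomials $\prod_{i,j}\tau_i^{\epsilon_i}\xi_j^{n(j)}$ with $\epsilon_i,n(j)\in\mathbb{N}$, and $\mathcal{B}_m$ is precisely the subset of \emph{reduced} monomials, those with $\epsilon_i\in\{0,1\}$ for every $i$. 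I must show that the composite $R\langle\mathcal{B}_m\rangle\hookrightarrow P\twoheadrightarrow\ste_\star$ is an isomorphism of $R$-modules, which splits into surjectivity (the reduced monomials span) and injectivity ($R$-linear independence in the quotient).

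For surjectivity I would regard the defining relations as a rewriting system on $P$, with one rule $\tau_i^2\to a\xi_{i+1}+a\tau_0\tau_{i+1}+\sigma^{-1}\tau_{i+1}$ for each $i\ge0$, applied inside any monomial; the monomials irreducible for this system are exactly the elements of $\mathcal{B}_m$. The only subtle point is the term $a\tau_0\tau_{i+1}$ on each right-hand side: it preserves the total $\tau$-degree and reintroduces the \emph{lowest}-index generator $\tau_0$, so no naive weighting of the $\tau_i$ makes every rule strictly decreasing and termination is not formal. I would prove termination using the $RO(\Z/2)$-grading. Every rule is homogeneous, so a reduction sequence stays in a single total degree, in which (by the Hu--Kriz computation of $\ste_\star$) only finitely many monomials occur; in particular the $a$-power is bounded there. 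Since the offending term $a\tau_0\tau_{i+1}$ carries an extra factor of the Euler class $a$ compared with the monic leading term $\tau_i^2$, ordering monomials lexicographically by $(\text{total }\tau\text{-degree},\ \text{bound}-a\text{-power})$ yields a measure that strictly decreases under every rule, giving termination.

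For injectivity I would invoke Bergman's diamond lemma, equivalently verify that the $r_i$ form a Gr\"obner basis over the ground ring $R$. Each relation is \emph{monic}: viewed as a polynomial in $\tau_i$ its leading coefficient is the unit $1$, so the normal-form combinatorics behaves as over a field and the standard monomials form a free basis of the quotient. The crucial structural observation is that the leading monomials $\tau_i^2$ of distinct relations are pairwise coprime, being powers of distinct variables; hence every overlap ambiguity is of the coprime type and resolves automatically by Buchberger's product criterion. Combined with the termination established above, confluence follows, so each element of $\ste_\star$ has a well-defined normal form and the $R$-linear normal-form map furnishes a two-sided inverse to $R\langle\mathcal{B}_m\rangle\to\ste_\star$, proving that $\mathcal{B}_m$ is a free $R$-basis.

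I expect the termination step to be the main obstacle, precisely because the right-hand sides feed $\tau_0$ back into the system; the grading argument is what tames the otherwise non-monotone reduction. Everything else --- that the reduced monomials are the normal forms, and that coprime leading terms force confluence --- is formal once the bookkeeping is in place. As a consistency check one may compare with the classical case via the forgetful functor to the non-equivariant dual Steenrod algebra, where the analogous monomial basis is standard; this confirms the count of reduced monomials but does not by itself deliver $R$-freeness, which is why the diamond-lemma argument is needed.
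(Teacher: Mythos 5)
Your proof follows the same basic strategy as the paper's: surjectivity by rewriting each monomial into reduced form using the defining relations, and injectivity by showing that the $H\mf_{\star}^{\Z/2}$-span of the reduced monomials meets the ideal of relations trivially. The genuine difference is that you justify the injectivity step via Bergman's diamond lemma, observing that the relations are monic with pairwise coprime leading monomials $\tau_i^2$, so that all ambiguities resolve and the reduced monomials are honest normal forms; the paper instead asserts in one line ("analogously to the non-equivariant odd case") that every nonzero element of the ideal has a nonzero component on some non-reduced monomial, which is exactly the confluence statement your argument actually proves. So your route is the more rigorous packaging of the same idea. One caveat on the termination step, which you rightly single out as the delicate point: the rule $\tau_i^2 \to a\xi_{i+1}+a\tau_0\tau_{i+1}+\sigma^{-1}\tau_{i+1}$ you work with comes from the relation as quoted at the start of section \ref{sec:Steenrod}, but that formula is not homogeneous in $RO(\Z/2)$-degree; the Hu--Kriz relation, and the one the paper actually reduces with inside its own proof, is $\tau_i^2 = a\tau_{i+1}+\eta_R(\sigma^{-1})\xi_{i+1}=a\tau_{i+1}+(\sigma^{-1}+a\tau_0)\xi_{i+1}$. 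With this form every term on the right-hand side has strictly smaller total $\tau$-degree, so termination is immediate and your lexicographic measure is unnecessary. If one insists on your form of the rule, the justification "only finitely many monomials occur in a fixed degree" is not literally true of the ambient polynomial algebra, since $H\mf_{\star}^{\Z/2}$ is nonzero in infinitely many degrees; it must be sharpened to the observation that the rewriting only ever produces coefficients in $\mathbb{F}_2[a,\sigma^{-1}]$, and in a fixed total degree the resulting constraint $\dim - twist$ on the monomials bounds all exponents and hence the power of $a$. With either repair the argument is complete and correct.
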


\begin{proof}
The proof is quite straightforward, but we include it here for completeness.

We show that the $H \mf_{ \star}^{\Z/2}$-module morphism 
$$ \phi : H \mf_{ \star}^{\Z/2} \{ \mathcal{B}_m \} \rightarrow \ste_{ \star}$$
is an isomorphism. \\ 
Let $R$ be the ideal generated by $ a \tau_{k+1}+ \eta_R( \sigma^{-1}) \xi_{k+1} - \tau_k^2$ for $k \geq 0$ so that $ \ste_{ \star} \cong H \mf_{ \star}^{\Z/2}[ \xi_{i+1}, \tau_{i} | i \geq 0] / R$.
\begin{itemize}
\item { \bf Surjectivity: } surjectivity follows from the definition of $\mathcal{B}_m$. Consider an element $ \xi_1^{i_1} \hdots \xi_n^{i_n} \tau_0^{j_1} \hdots \tau_m^{j_m}$ in $\mathcal{B}_m$. For all $k$ such that $j_k \geq 2$, write  $$ \xi_1^{i_1} \hdots \xi_n^{i_n} \tau_0^{j_1} \hdots \tau_m^{j_m} \equiv \xi_1^{i_1} \hdots \xi_n^{i_n}( \Pi_{k|j_k \leq 1} \tau_{j_k} )( \Pi_{k|j_k \geq 2} \tau_{k}^{j_k-2}(a \tau_{k+1}+ \eta_R( \sigma^{-1}) \xi_{k+1})$$ modulo $R$. By induction over $max \{ j_k \}$, there is an element of $H \mf_{ \star}^{\Z/2} \{ \mathcal{B}_m \}$ whose image by $ \phi$ is $ \xi_1^{i_1} \hdots \xi_n^{i_n} \tau_0^{j_1} \hdots \tau_m^{j_m}$.
\item { \bf Injectivity: }  this is shown analogously to the non-equivariant odd case. First, see that $Ker( \phi) \cong H \mf_{ \star}^{\Z/2} \{ \mathcal{B}_m \} \cap R$. But for all $0 \neq r \in R$, $ \exists i_1, \hdots, i_n, j_1, \hdots, j_k$ and $ \exists k \geq k_0 \geq 0$ such that $j_k \geq 2$ and $$pr_{ H \mf^{ \star}  \xi_1^{i_1} \hdots \xi_n^{i_n} \tau_0^{j_1} \hdots \tau_m^{j_m}}(r) \neq 0.$$ By definition of $ \mathcal{B}_m$,  $ H \mf_{ \star}^{\Z/2} \{ \mathcal{B}_m \} \cap R = 0$.
\end{itemize}
\end{proof}

In order to apply Boardman's theorem for $\ste_{\star}$-comodules {\em à la Boardman}, we need the following result.

\begin{pro} \label{corr_hypolibertehfd}
There is an isomorphism of Mackey functors $$ H \mf_{ \star}(H \mf) \cong \bigoplus_{b \in \mathcal{B}_m} \Sigma^{ |b|} H \mf_{ \star}. $$
\end{pro}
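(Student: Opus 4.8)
The plan is to upgrade the freeness statement of Proposition~\ref{pro_hypolibrepourhfd}, which only concerns the evaluation $(-)_{\Z/2}$, to an isomorphism of the full $RO(\Z/2)$-graded Mackey functor. Each monomial $b \in \mathcal{B}_m$ is a class in $(H\mf_\star H\mf)_{\Z/2}$ of degree $|b|$, hence determines a map of $H\mf_\star$-modules $\Sigma^{|b|}H\mf_\star \to H\mf_\star H\mf$; assembling these gives a morphism of $RO(\Z/2)$-graded $H\mf_\star$-module Mackey functors
\[
\phi : \bigoplus_{b \in \mathcal{B}_m} \Sigma^{|b|} H\mf_\star \longrightarrow H\mf_\star H\mf .
\]
Since a morphism of Mackey functors for $\Z/2$ is an isomorphism if and only if it is an isomorphism after applying both evaluation functors $(-)_{\Z/2}$ and $(-)_e$, it suffices to check these two cases separately. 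The functor $(-)_{\Z/2}$ turns $\phi$ into the comparison map of Proposition~\ref{pro_hypolibrepourhfd}, which is an isomorphism by that result.

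It remains to treat the underlying level $(-)_e$. First I would identify the source, namely $\left(\bigoplus_b \Sigma^{|b|} H\mf_\star\right)_e = \bigoplus_b \Sigma^{|b|} (H\mf_\star)_e$. By Proposition~\ref{pro_cohompoint} one has $(H\mf_\star)_e \cong \F[\sigma^{\pm 1}]$, concentrated in the degrees of dimension zero, while $a$ restricts to $0$ (this is a case of Lemma~\ref{lemma_mackey_a}(1), since $a = a\cdot 1$ lies in $\mathrm{Im}(a) = \mathrm{Ker}(\rho)$). For the target, the underlying spectrum of $H\mf$ is $H\F$, so $(H\mf_\star H\mf)_e$ is the classical mod $2$ dual Steenrod algebra, regraded over $RO(\Z/2)$ by virtual dimension and made $\F[\sigma^{\pm 1}]$-linear through the invertible class $\sigma^{-1}$.

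The crux is then to see that $\phi_e$ carries the monomials $\mathcal{B}_m$ to an $\F[\sigma^{\pm 1}]$-basis of this underlying algebra. Applying $\rho$ to the defining relation $\tau_i^2 = a\xi_{i+1} + (a\tau_0 + \sigma^{-1})\tau_{i+1}$ and using $\rho(a)=0$ together with the invertibility of $\rho(\sigma^{-1})$ collapses it to $\rho(\tau_{i+1}) = \rho(\sigma^{-1})^{-1}\,\rho(\tau_i)^2$, whence inductively $\rho(\tau_i) = \rho(\sigma^{-1})^{-(2^i-1)}\,\rho(\tau_0)^{2^i}$. Thus on the underlying level the $\tau_i$ with $i \geq 1$ become redundant, and the algebra is polynomial on $\rho(\tau_0)$ and the $\rho(\xi_j)$ over $\F[\sigma^{\pm 1}]$, matching the classical monomial basis. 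A monomial $\prod_i \tau_i^{\epsilon_i}\prod_j \xi_j^{n(j)}$ of $\mathcal{B}_m$ then restricts, up to an invertible power of $\sigma^{-1}$, to $\rho(\tau_0)^{\,m}\prod_j \rho(\xi_j)^{n(j)}$ with $m = \sum_i 2^i \epsilon_i$; since binary expansion gives a bijection between finitely supported sequences $(\epsilon_i) \in \{0,1\}^{\mathbb{N}}$ and integers $m \geq 0$, the images of $\mathcal{B}_m$ are exactly the classical basis elements, each occurring once. Hence $\phi_e$ is an isomorphism, and $\phi$ is an isomorphism of Mackey functors.

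I expect the main obstacle to be precisely the underlying computation: pinning down the restriction of the Hu--Kriz generators to the non-equivariant dual Steenrod algebra sharply enough to justify that $\{\rho(\tau_0),\rho(\xi_j)\}$ are genuine polynomial generators (and not merely a generating set). This is where the input of \cite{HK01} on the forgetful behaviour of $\ste_\star$ is essential, and it must be combined with the binary-expansion bookkeeping to guarantee that the images of distinct monomials of $\mathcal{B}_m$ suffer no collisions.
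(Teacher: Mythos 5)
Your overall strategy---inducing the comparison map from the classes $b \in \mathcal{B}_m$ so that it is automatically a morphism of Mackey functors, reducing to the two evaluations $(-)_{\Z/2}$ and $(-)_e$, and quoting Proposition \ref{pro_hypolibrepourhfd} at the fixed-point level---is sound and is essentially the paper's. The gap is in the underlying computation, and it is a genuine error rather than a missing detail. You apply $\rho$ to the relation in the form $\tau_i^2 = a\xi_{i+1} + (a\tau_0+\sigma^{-1})\tau_{i+1}$ as displayed at the start of Section \ref{sec:Steenrod}, but that display is inconsistent with the $RO(\Z/2)$-grading: one has $|\tau_i^2| = 2^{i+1} + (2^{i+1}-2)\alpha$, which equals $|a\tau_{i+1}|$ and $|\eta_R(\sigma^{-1})\xi_{i+1}|$ but not $|a\xi_{i+1}|$ or $|\sigma^{-1}\tau_{i+1}|$. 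The relation actually used throughout the paper (in the proofs of Proposition \ref{pro_hypolibrepourhfd} and Lemma \ref{lemma_condlibre}, and in \cite{HK01}) is $\tau_i^2 = a\tau_{i+1} + \eta_R(\sigma^{-1})\xi_{i+1}$. Killing $a$ therefore yields $\rho(\xi_{i+1}) = \sigma\,\rho(\tau_i)^2$, not $\rho(\tau_{i+1}) = \sigma\,\rho(\tau_i)^2$: on the underlying level it is the $\xi_j$ that become redundant, and $(H\mf_{\star}H\mf)_e$ is polynomial over $\F[\sigma^{\pm 1}]$ on the classes $\rho(\tau_i)$, which correspond (up to units) to the classical $\xi_{i+1}$ of the matching dimension $2^{i+1}-1$.

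This is not a cosmetic slip; it breaks your argument. The identity $\rho(\tau_i) = \rho(\sigma^{-1})^{-(2^i-1)}\rho(\tau_0)^{2^i}$ is already impossible on degree grounds ($\dim|\tau_i| = 2^{i+1}-1$ is odd, while the right-hand side has dimension $2^i$), and the ensuing bookkeeping gives the wrong count: in dimension $4$ the monomial basis $\mathcal{B}_m$ contains only $\tau_0\tau_1$ and $\xi_1^2$, whereas the monomials in $\rho(\tau_0)$ and the $\rho(\xi_j)$ of dimension $4$ are $\rho(\tau_0)^4$, $\rho(\tau_0)^2\rho(\xi_1)$ and $\rho(\xi_1)^2$, so your claimed bijection onto a basis cannot hold. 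With the correct relation the bookkeeping is by parity rather than binary expansion: $\prod_i\tau_i^{\epsilon_i}\prod_j\xi_j^{n(j)}$ restricts, up to a unit in $\F[\sigma^{\pm 1}]$, to $\prod_i\rho(\tau_i)^{\epsilon_i+2n(i+1)}$, and $(\epsilon_i,n(i+1))\mapsto \epsilon_i+2n(i+1)$ is the required bijection onto $\mathbb{N}$. Once this is corrected your proof closes up and matches the paper's, which in addition invokes \cite[theorem 6.41]{HK01} (injectivity of multiplication by $a$ in integer degrees) together with Lemma \ref{lemma_mackey_a} to see that the transfer and the Weyl action on the underlying level are trivial.
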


\begin{proof}
We first show the result in degrees indexed over trivial virtual representations $ \star = * \in \mathbb{Z} \subset RO( \Z/2)$. Let $F =  \bigoplus_{b \in \mathcal{B}_m} \Sigma^{ |b|} H \mf_{ \star}^{\Z/2}$.
We construct an explicit Mackey functor isomorphism.
$$ \xymatrix{ H \mf_{ *}^{\Z/2}H \mf \ar[rr] \ar@/^/[d]^{ \rho} & & F  \ar@/^/[d]^{ \rho} \\ 
H \mf_{ \star}^e(H \mf) \ar[rr] \ar@/^/[u]^{ \tau} & & \bigoplus_{b \in \mathcal{B}_m} \Sigma^{ |b|} H \mf_{ \star}^e \ar@/^/[u]^{ \tau}. } $$
Proposition \ref{pro_hypolibrepourhfd} gives precisely the isomorphism $H \mf_{ *}^{\Z/2}H \mf \rightarrow F$ by restricting to integers degrees. \\

By definition of Eilenberg-MacLane spectra the underlying non-equivariant spectrum of $H\mf$ is $H\F$, so there is an isomorphism of $ \Z$-graded abelian groups: 
$$ H \mf_{ *}^e(H \mf) = \underline{\pi}_*( H \mf \wedge H \mf)_e = \ste_*.$$ 
Recall that for all $\Z/2$-spectrum $E$ whose underlying non-equivariant spectrum is $E^u$, $\underline{\pi}_{\star}(E)_{\Z/2} \cong \pi_{dim(\star)}(E^u) \cong \pi_*(E^u)[\sigma^{\pm 1}]$. Thus, we obtain a $RO( \Z/2)$-graded abelian groups isomorphism: 
$$ (H \mf_{ \star}^e(H \mf)) = \underline{\pi}_*( H \mf \wedge H \mf)_e[\sigma^{-1}] = \ste_*[\sigma^{-1}].$$ 
The result \cite[Theorem 6.41]{HK01} implies that, in integer grading, the product with the Euler class $a$ on $H \mf_{ *}H \mf$ is injective. By Lemma \ref{lemma_mackey_a}, we know that the transfer is trivial in these degrees. Thus, the trace is trivial too, and the $ \Z/2$-action on $(H \mf_{ \star}^e(H \mf))$ is trivial. \\

But we already have $ H \mf_{ \star}^e = \F[ \sigma^{ \pm 1}]$, so $$ \bigoplus_{b \in \mathcal{B}_m} \Sigma^{ |b|} H \mf_{ \star}^e = \bigoplus_{b \in \mathcal{B}_m} \Sigma^{ deg(b)} \F$$ with trivial $ \Z/2$ action. \\

We deduce that the $\Z$-graded algebra morphism
$$ \psi : (H \mf_{ *}(H \mf))_{ \Z/2} \rightarrow \bigoplus_{b \in \mathcal{B}_m} \Sigma^{ |b|} (H \mf_{ *})_{ \Z/2}$$ which sends, for all $i \geq 0$, the element $ \sigma^{ -2^i+1} \tau_i \in  (H\mf_{2^{i+1}-1}H\mf)_{ \Z/2}$ of the Steenrod algebra on $ \xi_{i+1} \in \ste^* = (H \mf_{ *}(H \mf))_{ \Z/2}$ is a $ \F[ \Z/2]$-module isomorphism. \\

Commutation with transfer is satisfied since these morphisms are trivial. \\

By Lemma \ref{lemma_mackey_a}, we know the coimage of the restriction morphism for $ H \mf_{ \star}(H \mf)$ and $ \bigoplus_{b \in \mathcal{B}_m} \Sigma^{ |b|} H \mf_{ \star}$. For dimension reasons (the vector spaces are of finite dimension in each degree), these two restriction morphisms are surjective. Thus, replacing  $  \psi^{-1}$ by the composition of $  \psi^{-1}$ with a $ \F$-vector space isomorphism if necessary, the morphism

$$ \xymatrix{ H \mf_{ *}^{\Z/2}H \mf \ar[rr]^{ \phi} \ar@/^/[d]^{ \rho} & & F  \ar@/^/[d]^{ \rho} \\ 
H \mf_{ \star}^e(H \mf) \ar[rr]^{ \psi} \ar@/^/[u]^{ \tau} & & \bigoplus_{b \in \mathcal{B}_m} \Sigma^{ |b|} H \mf_{ \star}^e \ar@/^/[u]^{ \tau}. } $$
is an isomorphism of Mackey functors.
\end{proof}

We finish this subsection with a comparison between the $\Z/2$-equivariant dual Steenrod algebra and its non-equivariant counterpart $\ste_*$, which use the Mackey functor structure we have just determined.

\begin{pro} \label{pro_comparaisonste}
Via the identification $H \mf_{ \star}^{e}(H \mf) \cong \ste_*[ \sigma^{ \pm 1}]$, the restriction of the Mackey functor $ H \mf_{ \star}(H \mf)$ yields an algebra map
$$ \rho : \ste_{ \star} \rightarrow \ste_*[ \sigma^{ \pm 1}].$$
Moreover it induces a $\Z$-graded Hopf algebroid morphism
$$ r :( (H \mf_{ \star})_e, \ste_{ \star}) \rightarrow ( \F, \ste_*)$$
with the $\Z$-graduation induced by $dim : RO(\Z/2) \rightarrow \Z$, and the identification $ \ste_*[ \sigma^{ \pm 1}]/( \sigma^{-1} - 1) \cong \ste_*$.
\end{pro}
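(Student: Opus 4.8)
The plan is to recognize $\rho$ as the map induced on homotopy Mackey functors by the forgetful functor $U : \Z/2\sh \to \sh$ sending a $\Z/2$-spectrum to its underlying non-equivariant spectrum, and to exploit that $U$ is strong symmetric monoidal. By the definition of the restriction of the Mackey functor $\underline{\pi}_{\star}(H\mf \wedge H\mf)$ (induced by $\Z/2_+ \to S^0$, i.e.\ by forgetting equivariance), $\rho$ is precisely $U_*$. Since $U(H\mf) = H\F$ by definition of the equivariant Eilenberg--MacLane spectrum, monoidality gives $U(H\mf \wedge H\mf) \simeq H\F \wedge H\F$, whose homotopy, reindexed over $RO(\Z/2)$ via $dim$, is the target $\ste_*[\sigma^{\pm 1}]$ identified in proposition \ref{corr_hypolibertehfd}. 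The product on $\ste_{\star}$ is induced by the spectrum-level multiplication on $H\mf \wedge H\mf$, so because $U$ is monoidal, $\rho = U_*$ automatically commutes with products; this yields the first assertion that $\rho : \ste_{\star} \to \ste_*[\sigma^{\pm 1}]$ is an algebra map.

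For the Hopf algebroid structure I would observe that each structure map of $(H\mf_{\star}^{\Z/2}, \ste_{\star})$ is induced by a map of $\Z/2$-spectra: the two units by insertion of $S^0 \to H\mf$ into one or the other smash factor, the coproduct $\Delta$ by the middle unit insertion $H\mf\wedge H\mf \to H\mf \wedge H\mf \wedge H\mf$, the counit $\epsilon$ by the multiplication $\mu$, and the antipode $c$ by the swap. Applying the monoidal functor $U$ carries each of these to the corresponding structure map of the underlying Hopf algebroid, and naturality of $\rho = U_*$ with respect to these maps forces every defining diagram of definition \ref{de_equivalgebroidetc} to commute. Hence $\rho$ is a morphism of $RO(\Z/2)$-graded Hopf algebroids from $(H\mf_{\star}^{\Z/2}, \ste_{\star})$ onto the underlying one $\bigl((H\mf_{\star})_e, H\mf_{\star}^e(H\mf)\bigr) \cong (\F[\sigma^{\pm 1}], \ste_*[\sigma^{\pm 1}])$.

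It then remains to collapse the grading. Applying $dim : RO(\Z/2) \to \Z$ turns every object into a $\Z$-graded one, and under restriction a class of degree $V$ lands in underlying homotopy in degree $dim(V)$; as $\sigma^{-1}$ has degree $1-\alpha$ with $dim(1-\alpha)=0$, it becomes a degree-zero invertible scalar. The key structural point is that $\sigma^{-1}$ is the image of the coefficient class $\sigma^{-1} \in (H\mf_{\star})_e = \F[\sigma^{\pm 1}]$; since $\pi^e_{1-\alpha}(H\mf\wedge H\mf) = \pi_0(H\F\wedge H\F)=\F$ is one-dimensional, both units must send it to the same generator, so $\eta_L(\sigma^{-1}) = \eta_R(\sigma^{-1}) = \sigma^{-1}$ and $\sigma^{-1}$ is a central invertible element on which $\Delta$, $\epsilon$, $c$ act trivially ($\Delta(\sigma^{-1}) = \sigma^{-1}\otimes 1 = 1\otimes\sigma^{-1}$). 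Thus $(\F[\sigma^{\pm 1}], \ste_*[\sigma^{\pm 1}])$ is the tensor of $(\F,\ste_*)$ with the trivial Hopf algebroid $(\F[\sigma^{\pm 1}], \F[\sigma^{\pm 1}])$, the element $(\sigma^{-1}-1)$ generates a Hopf ideal, and the quotient recovers $\F[\sigma^{\pm 1}]/(\sigma^{-1}-1)\cong\F$ and $\ste_*[\sigma^{\pm 1}]/(\sigma^{-1}-1)\cong\ste_*$. Composing $\rho$ with this quotient defines the desired $\Z$-graded Hopf algebroid morphism $r$.

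The main obstacle is the bookkeeping in this last step: one must check that the isomorphism $H\mf_{\star}^e(H\mf) \cong \ste_*[\sigma^{\pm 1}]$ of proposition \ref{corr_hypolibertehfd} is compatible not merely with the algebra structure but with the full comultiplication and antipode, i.e.\ that the $\sigma$-direction contributes nothing beyond a central grouplike scalar. I would settle this by the centrality/grouplikeness argument above, deducing it directly from the facts that $\sigma^{-1}$ comes from the coefficient ring $(H\mf_{\star})_e$ and that the relevant homotopy group is one-dimensional; once this is in hand, the passage to $dim$-grading and the quotient by $(\sigma^{-1}-1)$ are purely formal.
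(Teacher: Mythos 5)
Your proposal is correct and follows essentially the same route as the paper: the paper's (two-line) proof likewise observes that $\rho$ is induced by the ring $\Z/2$-spectrum map $p_+\wedge id_{H\mf\wedge H\mf}:\Z/2_+\wedge H\mf\wedge H\mf\rightarrow H\mf\wedge H\mf$, which is exactly your ``restriction is induced by a monoidal spectrum-level map'' argument in different clothing. The only difference is that you additionally verify the collapse of the grading and the quotient by $(\sigma^{-1}-1)$ --- in particular that $\eta_L(\sigma^{-1})$ and $\eta_R(\sigma^{-1})$ agree after restriction because $\rho(a)=0$ kills the discrepancy $\eta_R(\sigma^{-1})-\sigma^{-1}$ --- a point the paper leaves implicit; your one-dimensionality argument for this is sound.
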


\begin{proof}
By definition, the map $ \rho$ is induced by $ p_+ \wedge id_{H \mf \wedge H \mf} : \Z/2_+ \wedge H \mf \wedge H \mf \rightarrow H \mf \wedge H \mf$ where $p :  \Z/2\rightarrow *$ is the unique $ \Z/2$-set map. Now, $ \rho$ induces a Hopf algebroid map because the restriction is induced by a ring  $ \Z/2$-spectrum map $ p_+ \wedge id_{H \mf \wedge H \mf}$.
\end{proof}

 \begin{rk}
We choose to quotient out the ideal $ (\sigma^{-1}-1)$. Another possibility would have been to choose $( \sigma^{-1})$. The map would then have been the analogue for the dual Steenrod algebras of the map studied by Caruso \cite{Ca99}.
 \end{rk}

\section{Quotients of the dual Steenrod algebra} \label{sec:Quotient} \label{sec:QuotientII}

\subsection{Quotient Hopf algebroids}

\begin{de}[see A.1 \emph{\cite{Ra86}}]  \label{de_ideal}
Let $(H, A)$ be a $RO(\Z/2)$-graded Hopf algebroid. An ideal $I$ of $A$ is a $RO(\Z/2)$-graded Hopf algebroid ideal if
\begin{eqnarray*}
 \epsilon(I) & = & 0 \\
 \Delta(I) &  \subset &  I \otimes_H A \oplus A \otimes_H I \\
 c(I) & \subset & I  \\
\end{eqnarray*}
\end{de}

\begin{pro} \label{pro_quotienthopf}
Let $(H, A)$ be a $RO(\Z/2)$-graded Hopf algebroid and $I$ be a $RO(\Z/2)$-graded Hopf algebroid ideal of $(H, A)$.
Then there is a natural $RO(\Z/2)$-graded Hopf algebroid structure on $(H, A/I)$ such that the projection $A \surj A/I$ induces a $RO(\Z/2)$-graded Hopf algebroid morphism $$(H,A) \rightarrow (H,A/I).$$
\end{pro}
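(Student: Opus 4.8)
The plan is to push every structure map of $(H,A)$ through the quotient $\pi : A \surj A/I$, using the three defining conditions of a Hopf algebroid ideal (Definition \ref{de_ideal}) to guarantee that each map descends; the axioms of Definition \ref{de_equivalgebroidetc} for $(H, A/I)$ then follow from those of $(H,A)$ by surjectivity of $\pi$. To begin, since $I$ is in particular a two-sided ideal of the $RO(\Z/2)$-graded algebra $A$, the quotient $A/I$ carries a canonical $RO(\Z/2)$-graded algebra structure for which $\pi$ is an algebra morphism. Composing the original units with $\pi$ gives $\bar\eta_L := \pi\eta_L$ and $\bar\eta_R := \pi\eta_R$, which endow $A/I$ with an $H$-bimodule structure making $\pi$ an $H$-bimodule map.

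Next I would descend the three remaining structure maps, each using one of the ideal conditions. Because $\epsilon(I)=0$, the counit $\epsilon$ factors uniquely as $\bar\epsilon\circ\pi$ with $\bar\epsilon : A/I \rightarrow H$. Because $c(I)\subset I$, the composite $\pi c : A \rightarrow A/I$ annihilates $I$ and hence factors as $\bar c\circ\pi$ with $\bar c : A/I \rightarrow A/I$. For the coproduct, consider $(\pi\otimes_H\pi)\Delta : A \rightarrow A/I \otimes_H A/I$; by right exactness of the graded tensor product over $H$ applied to the short exact sequences $0 \to I \to A \to A/I \to 0$ in each variable, the kernel of $\pi\otimes_H\pi$ is exactly the submodule $I\otimes_H A + A\otimes_H I$ of $A\otimes_H A$. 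The condition $\Delta(I)\subset I\otimes_H A \oplus A\otimes_H I$ is therefore precisely the assertion that $(\pi\otimes_H\pi)\Delta$ vanishes on $I$, so it factors as $\bar\Delta\circ\pi$ with $\bar\Delta : A/I \rightarrow A/I \otimes_H A/I$.

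It then remains to verify that $(H,A/I)$ equipped with $(\bar\eta_L,\bar\eta_R,\bar\Delta,\bar\epsilon,\bar c)$ satisfies all the identities of Definition \ref{de_equivalgebroidetc}. Each is an equality of maps whose source is either $H$ or a tensor power of $A/I$; in the latter case, since $\pi$ (and hence each $\pi\otimes_H\cdots\otimes_H\pi$) is an epimorphism, it suffices to check the identity after precomposing with the appropriate power of $\pi$. There it reduces to the corresponding identity for $(H,A)$ combined with the intertwining relations $\bar\epsilon\,\pi=\epsilon$, $\bar c\,\pi=\pi c$, and $\bar\Delta\,\pi=(\pi\otimes_H\pi)\Delta$. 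In this way coassociativity, the counit and antipode relations, and the compatibilities $\bar c\,\bar\eta_L=\bar\eta_R$, $\bar c\,\bar\eta_R=\bar\eta_L$, $\bar c\,\bar c=Id$ are all inherited from $(H,A)$. By construction $\pi$ commutes with every structure map, so it is a morphism of $RO(\Z/2)$-graded Hopf algebroids.

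The main obstacle is the well-definedness of $\bar\Delta$, which hinges on correctly identifying the kernel of $\pi\otimes_H\pi$ on the relative tensor product $A\otimes_H A$; this is the only point where the argument is not a purely formal diagram chase, since it uses right exactness of the $RO(\Z/2)$-graded tensor product over $H$ rather than a direct sum decomposition. Everything else is, \emph{mutatis mutandis}, identical to the classical non-graded case.
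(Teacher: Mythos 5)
Your argument is correct and is exactly the $RO(\Z/2)$-graded transcription of the classical quotient-Hopf-algebroid construction, which is all the paper's proof asserts (it simply states that the classical proof carries over). The one point worth making explicit, which you gloss over slightly, is that the dotted arrows in condition (3) of Definition \ref{de_equivalgebroidetc} also descend, since $\mu(c\otimes 1)$ and $\mu(1\otimes c)$ send $I\otimes_H A + A\otimes_H I$ into $I$; otherwise the write-up is complete.
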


\begin{proof}
The proof is a $RO(\Z/2)$-graded version of the classical one.
\end{proof}

\begin{rk} \label{remarque_quotient_non_libre}
Even when $A$ is $H$-free, since the ring $H$ is not a field, $A/I$ is not necessarily a free $H$-module.
\end{rk}

\begin{de} \label{de_hopfconnexe}
A $RO(\Z/2)$-graded Hopf algebroid $(H,A)$ is connected if $A$ is generated as an $H$-module in degrees $V \in RO( \Z/2)$ such that $dim(V) \geq 0$ with one generator of degree zero, which is a copy of $H$ generated by $\eta_R$ and $\eta_L$.
\end{de}

\begin{ex}
The $RO(\Z/2)$-graded Hopf algebroid $(H \mf_{ \star}^{\Z/2}, \ste_{ \star})$ is connected.
\end{ex}

\begin{pro} \label{pro_conjconnexe}
 If $(H,A)$ is connected and $I$ is an ideal of  $A$ as an algebra satisfying
$$\epsilon(I)  =  0 $$ and $$\Delta(I)  \subset   I \otimes_H A \oplus A \otimes_H I.$$
Then, $I$ is a $RO(\Z/2)$-graded Hopf algebroid ideal.
\end{pro}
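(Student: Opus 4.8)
The plan is to notice that, by Definition \ref{de_ideal}, the two displayed hypotheses are exactly the first two conditions required of a $RO(\Z/2)$-graded Hopf algebroid ideal, so that the only thing left to establish is the antipode condition $c(I) \subseteq I$. All of the work will come from the connectedness hypothesis combined with the antipode axiom encoded in the commutative diagram of Definition \ref{de_equivalgebroidetc}, which in the form
$$\mu \circ (c \otimes \mathrm{Id}_A) \circ \Delta = \eta_R \circ \epsilon$$
exhibits $c$ as a recursively determined map.

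First I would fix a homogeneous $x \in I$. Since $\epsilon(I) = 0$ we have $\eta_R\epsilon(x) = 0$, and connectedness (Definition \ref{de_hopfconnexe}) lets me write the coproduct in reduced form $\Delta(x) = x \otimes 1 + 1 \otimes x + \bar{\Delta}(x)$, where $\bar{\Delta}(x) = \sum x' \otimes x''$ has both tensor factors in the augmentation ideal $\ker\epsilon$ and of strictly lower filtration than $x$. Substituting this into the antipode axiom and using $c(1) = 1$ (itself a consequence of $c\eta_L = \eta_R$ and $c\eta_R = \eta_L$ applied to the unit) yields the recursion
$$c(x) = -x - \sum c(x')\, x''.$$
Thus $c(x)$ is expressed through $x \in I$ and the terms $c(x')\,x''$, and the problem reduces to showing that the latter lie in $I$.

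Next I would run an induction along the filtration supplied by connectedness. Because $x \otimes 1 + 1 \otimes x \in I \otimes_H A \oplus A \otimes_H I$ already (as $x \in I$), the hypothesis $\Delta(I) \subseteq I \otimes_H A \oplus A \otimes_H I$ forces $\bar{\Delta}(x) \in I \otimes_H A \oplus A \otimes_H I$ as well. I would therefore split the reduced coproduct into a family $\sum p' \otimes p''$ with $p' \in I$ and a family $\sum q' \otimes q''$ with $q'' \in I$. For the first, the inductive hypothesis (each $p'$ has strictly smaller filtration) gives $c(p') \in I$, whence $c(p')\,p'' \in I$ since $I$ is an ideal; for the second, $c(q')\,q'' \in I$ directly, again because $I$ is a (two-sided) ideal. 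Hence $\sum c(x')\,x'' \in I$, and the recursion gives $c(x) \in I$, closing the induction.

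The main obstacle is not this formal bookkeeping but making the induction well-founded in the $RO(\Z/2)$-graded context. Naively inducting on $dim|x|$ fails, because $H = H\mf_{\star}$ itself carries classes of arbitrarily negative virtual dimension, so a fixed total degree does not bound the complexity of $x$. The correct move is to induct not on $dim$ but on the filtration by powers of the augmentation ideal $\ker\epsilon$ (equivalently, on the length in the positive $H$-module generators furnished by Definition \ref{de_hopfconnexe}); this is the one place where connectedness is genuinely used, precisely because it guarantees that $\bar{\Delta}$ strictly decreases this filtration and that the filtration is well-founded. Once this is set up, the argument is, \emph{mutatis mutandis}, the classical proof that an augmentation-closed and coproduct-closed ideal of a connected Hopf algebroid is automatically closed under the antipode.
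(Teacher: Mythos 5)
Your skeleton is exactly the one the paper has in mind when it says the proof is ``analogous to the classical case'': reduce to the antipode condition, use the recursion $c(x) = -x - \sum c(x')\,x''$ extracted from $\mu\circ(c\otimes \mathrm{Id}_A)\circ\Delta = \eta_R\circ\epsilon$, split $\bar{\Delta}(x)$ according to $I\otimes_H A \oplus A\otimes_H I$, and close an induction using that $I$ is a two-sided ideal. The gap is in the induction parameter you substitute at the end. The filtration by powers of $\ker\epsilon$ (equivalently, by length in the $H$-module generators) is \emph{not} strictly decreased by the reduced coproduct; it has the wrong variance. Already in $\ste_{\star}$ one has $\bar{\Delta}(\xi_2)=\xi_1^2\otimes\xi_1$: here $\xi_2$ has length one while the tensor factors have lengths two and one, so the factors $x',x''$ are not ``of strictly lower filtration than $x$'' as you assert, and the induction never gets off the ground --- a length-one element of $\ker\epsilon$ need not be primitive, so there is no base case. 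The coalgebra-side filtration that \emph{is} strictly decreased by $\bar{\Delta}$ is the coradical (primitive) filtration, which is not the same thing as powers of the augmentation ideal, and proving that it is exhaustive for a connected Hopf algebroid throws you straight back onto the grading you discarded.

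The paper's own proof does induct on $\dim(|x|)$, and your objection to this, while it points at a real subtlety, mislocates where the dimension is applied. Connectedness (Definition \ref{de_hopfconnexe}) places the $H$-module \emph{generators} of $A$ in dimensions $\geq 0$, with only the unit in dimension $0$; the induction is run on the dimension of these generators, i.e.\ on the filtration $F_nA$ spanned over $H$ by generators $b$ with $\dim|b|\leq n$, not on the total dimension of an arbitrary homogeneous element. The negative-dimensional classes of $H\mf_{\star}^{\Z/2}$ therefore do not interfere: for $x\in I\cap F_n$, the counit axiom puts $\bar{\Delta}(x)$ in $\ker\epsilon\otimes_H\ker\epsilon$, and since $\ker\epsilon$ is generated in dimensions $\geq 1$ while the bidegrees add up to $|x|$, both factors can be taken in $F_{n-1}$; the recursion then closes exactly as in your third paragraph. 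If you replace your length filtration by this one, the rest of your argument goes through verbatim and coincides with the paper's intended proof.
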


\begin{proof}
The proof is analogous to the one in the classical $\Z$-graded case, reasoning with $dim(|x|) \in \Z$ for the induction hypothesis.
\end{proof}

\subsection{A family of quotient algebras of {$\ste_{\star}$}}

In this subsection we define the particular quotient algebras of the equivariant Steenrod algebra we are interested in. The aim of the rest of this section is to develop a theory of profile functions for quotient Hopf algebroids of the $\Z/2$-equivariant dual Steenrod algebra.
The  results we prove here for the $\Z/2$-equivariant Steenrod algebra are analogous to those of Adams and Margolis \cite{AM74} . The notation used here is very similar to that of Adams and Margolis for the non-equivariant odd primary case.

\begin{de} \label{defhk}
We call {\em profile function} a pair of maps $(h,k)$, $$h : \mathbb{N} \backslash \{ 0 \} \rightarrow \mathbb{N} \cup \{ \infty \} $$  $$k: \mathbb{N}  \rightarrow \mathbb{N} \cup \{ \infty \}.$$
For profile functions $(h,k)$, denote by $I(h,k)$ the two-sided ideal of $ \ste_{ \star}$ generated by $ \xi_i^{2^{h(i)}}$  and $ \tau_i^{2^{k(i)}}$ (with the convention $x^{ \infty} = 0$).
\end{de}

We think of profile functions $(h,k)$ as a way to encode the quotient algebra $\ste_{\star}/I(h,k)$ of the $\Z/2$-equivariant Steenrod algebra. 

\begin{ex} \label{exemple_ideaux_egaux}
Denote $h = (0, \infty, \infty, \hdots )$.
We define two profile function which will serve as examples through the rest of this section: 
\begin{itemize}
 \item let $I_1 = I(h,k_1)$ for $k_1 = (1,0, \infty, \infty, \hdots )$,
\item let $I_2 = I(h,k_2)$ for  $k_2 = ( \infty,0, \infty, \infty, \hdots ).$
\end{itemize}
\end{ex}
Because of the relation $ \tau_0^2 = a \tau_1 + \eta_R(\sigma^{-1}) \xi_1$ in the $\Z/2$-equivariant dual Steenrod algebra, the two ideals $I_1$ and $I_2$ coincide. In the next subsection, for a given ideal $I$ of $\ste_{\star}$, we define a preferred choice of profile function defining $I$ called the minimal profile function.

\subsection{Minimality of profile functions}

\begin{de}
\begin{itemize}
 \item Define a partial order on profile functions $(h,k)$, by $(h,k) \leq (h',k')$ if $ \forall n \geq 0$, $h(n+1) \leq h'(n+1)$ and $k(n) \leq k'(n)$.
\item We say that a profile function $(h,k)$ is {\em minimal} if it is minimal among the profile functions $(h',k')$ such that $I(h',k')=I(h,k)$. 
\end{itemize}
\end{de}

\begin{lemma} \label{lemma_minequiv}
\begin{enumerate}
 \item A profile function $(h,k)$ is minimal if and only if
$ \forall i,n \geq 0$, $ \tau_i^{2^n} \in I(h,k)$ is equivalent to $n \geq k(i)$.
\item Let $(h,k)$ be a profile function.
Then the profile function $(h, \tilde{k})$ defined by
\begin{eqnarray*}
 \tilde{k} : \mathbb{N} \backslash \{ 0 \} &  \rightarrow & \mathbb{N} \cup \{ \infty \}  \\
n & \mapsto & Min \{ l \in \mathbb{N} | \tau_n^{2^l} \in I(h,k) \}.
\end{eqnarray*}
is the unique minimal profile function such that $I(h,k) = I(h, \tilde{k})$.
\end{enumerate}
\end{lemma}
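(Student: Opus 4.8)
The plan is to extract from the ideal $I=I(h,k)$ two quantities that depend only on $I$ and not on the chosen profile function, namely
\[
\tilde{k}(i)=\mathrm{Min}\{\,n\mid \tau_i^{2^n}\in I\,\},\qquad \tilde{h}(i)=\mathrm{Min}\{\,m\mid \xi_i^{2^m}\in I\,\},
\]
and then to read both parts of the lemma off from these. First I would record the elementary observation that, for each $i$, the set $\{\,n\mid \tau_i^{2^n}\in I\,\}$ is upward closed, since $\tau_i^{2^{n+1}}=(\tau_i^{2^n})^2\in I$ as soon as $\tau_i^{2^n}\in I$; the same holds for $\xi_i$. Consequently $\tau_i^{2^n}\in I\Leftrightarrow n\geq \tilde{k}(i)$ \emph{always} holds, so the condition in part (1) is nothing but the equality $k=\tilde{k}$. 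The whole lemma thus reduces to proving (a) $\tilde{h}=h$ and (b) $I(h,\tilde{k})=I$, after which everything is formal.

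The steps (b) and the equivalence ``minimal $\Leftrightarrow k=\tilde{k}$'' are routine. For (b): since $\tau_i^{2^{k(i)}}$ is a generator of $I$ we have $\tilde{k}(i)\leq k(i)$; the inclusion $I(h,\tilde{k})\subseteq I$ holds because $\tau_i^{2^{\tilde{k}(i)}}\in I$ by definition of $\tilde{k}$, and $I\subseteq I(h,\tilde{k})$ holds because $\tau_i^{2^{k(i)}}=(\tau_i^{2^{\tilde{k}(i)}})^{2^{k(i)-\tilde{k}(i)}}$ is a multiple of a generator of $I(h,\tilde{k})$. Granting (a), part (1) follows: if $(h,k)$ is minimal then $(h,\tilde{k})\leq(h,k)$ has the same ideal by (b), so minimality forces $k=\tilde{k}$; conversely if $k=\tilde{k}$ and $(h',k')\leq(h,k)$ has the same ideal, then $\tau_i^{2^{k'(i)}}\in I$ gives $k'(i)\geq\tilde{k}(i)=k(i)$, while $\xi_i^{2^{h'(i)}}\in I$ together with (a) gives $h'(i)\geq\tilde{h}(i)=h(i)$, so $(h',k')=(h,k)$. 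Part (2) is then immediate: $(h,\tilde{k})$ has ideal $I$ by (b), it is minimal because applying $\tilde{\cdot}$ again returns $\tilde{k}$, and any minimal profile function with ideal $I$ has $\tau$-part equal to $\tilde{k}$ and, by (a), $\xi$-part equal to $\tilde{h}=h$, which gives uniqueness.

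The heart of the argument is step (a): the relation $\tau_i^2=a\xi_{i+1}+(a\tau_0+\sigma^{-1})\tau_{i+1}$ must not lower the minimal power of $\xi_i$ lying in $I$, i.e.\ $\xi_i^{2^m}\in I$ must force $m\geq h(i)$. The main obstacle is that the obvious tool---reduction modulo $a$, equivalently the restriction $\rho$ of Proposition~\ref{pro_comparaisonste} whose kernel is $\mathrm{Im}(a)$ by Lemma~\ref{lemma_mackey_a}---is \emph{insufficient}: modulo $a$ one has $\overline{\xi_i}\leftrightarrow\zeta_i^2$ and $\overline{\tau_{i-1}}\leftrightarrow\zeta_i$ (up to units), so the image of a $\tau_{i-1}$-generator is a power of the very same Milnor generator as $\overline{\xi_i}$, and the image ideal can already contain a power of $\overline{\xi_i}$ even when $\xi_i^{2^m}\notin I$. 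The information that rules this out lives in the $a$-divisible part. Instead I would use the ring homomorphism
\[
\pi:\ste_{\star}\longrightarrow (H\mf_{\star}^{\Z/2}/(a))[\xi_1,\xi_2,\dots],\qquad a\mapsto 0,\quad \tau_i\mapsto 0,\quad \xi_i\mapsto \xi_i,
\]
which is well defined precisely because each generator $\tau_i^2-a\xi_{i+1}-(a\tau_0+\sigma^{-1})\tau_{i+1}$ of the relation ideal becomes $0$ once $a$ and all the $\tau_i$ are killed; the target is then a genuine polynomial ring. Since $\pi$ is a surjective ring map, $\pi(I)$ is the ideal generated by $\pi(\xi_j^{2^{h(j)}})=\xi_j^{2^{h(j)}}$ and $\pi(\tau_j^{2^{k(j)}})=0$, i.e.\ the monomial ideal $(\xi_j^{2^{h(j)}})_j$. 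Applying $\pi$ to $\xi_i^{2^m}\in I$ yields $\xi_i^{2^m}\in(\xi_j^{2^{h(j)}})_j$, and divisibility of monomials in a polynomial ring forces $2^m\geq 2^{h(i)}$, that is $m\geq h(i)$ (including the case $h(i)=\infty$, where no generator is a power of $\xi_i$ and so no finite power of $\xi_i$ lies in $I$); the reverse implication is trivial since $\xi_i^{2^{h(i)}}$ is a generator. This proves $\tilde{h}=h$ and, together with the formal steps above, the whole lemma. The one delicate point to get right is this choice of projection $\pi$ killing simultaneously $a$ and all the $\tau_i$: it is what isolates exactly the ``pure $\xi$, $a$-free'' part of $I$ that the restriction map throws away.
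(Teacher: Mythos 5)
Your proof is correct, and for the $k$-part it runs along the same lines as the paper: introduce $\tilde{k}(i)=\mathrm{Min}\{l\mid \tau_i^{2^l}\in I\}$, note that $I(h,\tilde{k})=I(h,k)$, and translate the condition in part (1) into the equality $k=\tilde{k}$ via upward closure of $\{n\mid\tau_i^{2^n}\in I\}$. Where you genuinely depart from the paper is step (a), the assertion that $\xi_i^{2^m}\in I(h,k)$ forces $m\geq h(i)$, equivalently that the $\xi$-part of a profile function is determined by the ideal. The paper simply asserts this (``if $I(h,k)=I(h',k')$, then $h=h'$'') without argument, even though it is needed both to conclude minimality in the direction ``$k=\tilde{k}\Rightarrow(h,k)$ minimal'' (one must rule out lowering $h$ while keeping the same ideal) and for the uniqueness claim in part (2). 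Your projection $\pi:\ste_{\star}\to(H\mf_{\star}^{\Z/2}/(a))[\xi_1,\xi_2,\dots]$ killing $a$ and all the $\tau_i$ is a clean way to supply this: the relation $\tau_i^2=a\xi_{i+1}+(a\tau_0+\sigma^{-1})\tau_{i+1}$ dies under $\pi$, the target is an honest polynomial ring over a nonzero commutative ring by the Hu--Kriz presentation, $\pi(I(h,k))$ is the monomial ideal $(\xi_j^{2^{h(j)}})_j$, and comparison of coefficients of the monomial $\xi_i^{2^m}$ forces $m\geq h(i)$. Your observation that reduction modulo $a$ alone cannot do this job (since it sends $\tau_{i-1}$ and $\xi_i$ to powers of the same Milnor generator) correctly identifies why a projection killing the $\tau_i$ as well is the right tool. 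The only cosmetic point: in the inclusion $I\subseteq I(h,\tilde{k})$ the identity $\tau_i^{2^{k(i)}}=(\tau_i^{2^{\tilde{k}(i)}})^{2^{k(i)-\tilde{k}(i)}}$ should be read with the convention $x^{\infty}=0$ when $k(i)=\infty$, where the containment is trivial; this does not affect the argument.
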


\begin{proof}
\begin{enumerate}
 \item Let $(h,k)$ be a profile function. If $ \tau_i^{2^n} \not\in I(h,k)$, then $n \leq k(i)$, this proves $\Leftarrow$. \\

For $\Rightarrow$, let $(h,k)$ be a profile function. If $(h,k)$ do not satisfy the asserted hypothesis, there is an integer $i$ such that $ \tau_i^{2^{k(i)-1}} \in I(h,k)$. Let $ \tilde{k}(j)= \left\{ \begin{matrix} k(j) \text{ if } j \neq i \\ k(i)-1 \text{ if } i=j. \end{matrix} \right. $ Then, we have $I(h,k)= I(h, \tilde{k})$,  and $(h, \tilde{k}) <(h,k)$. Thus $(h,k)$ is not minimal.
\item First, observe that the two profile functions $(h,k)$ and $(h, \tilde{k})$ generates the same ideal. Moreover, by the first point, the profile function $(h, \tilde{k})$ is minimal. Thus, it remains to check that such a minimal couple is unique. It is a consequence of the first point: if $I(h,k) = I(h',k')$, then $h = h'$ and the first point implies that for any profile function $(h',k')$ generating the ideal $I(h,k)$, we have $(h, \tilde{k}) \leq (h',k')$.
\end{enumerate}
\end{proof}

\begin{ex} 
Consider the examples defined in Example \ref{exemple_ideaux_egaux}; the minimal profile function associated to $(h,k_2)$ is $(h,k_1)$. Indeed,
\begin{itemize}
\item the profile function $(h,k_2)$ is not minimal: $ \tau_0^2 = a \tau_1 + \eta_R( \sigma^{-1}) \xi_1 \in I(h,k)$ but $k_2(0) = \infty$, so the first point of Lemma \ref{lemma_minequiv} cannot be satisfied.
 \item the profile function $(h,k_1)$ is minimal.
\end{itemize}
\end{ex}

\subsection{Conditions on the generating relations to provide a quotient Hopf algebroid}

We now focus on necessary and sufficient conditions on a profile function $(h,k)$ so that $ \ste_{ \star}/I(h,k)$ has a Hopf algebroid structure induced by the quotient. \\

\begin{rk}
The following should be compared to the odd primary case of \cite{AM74}. The similarity between the two cases comes from the fact that the formulas for the coproduct in $ \ste^p_*$, for odd $p$ (with the notations of \cite{AM74}) and $ \ste_{ \star}$ (with our notations) are the same.
\end{rk}

\begin{pro} \label{proihk}
Let $(h,k)$ be a minimal pair of profile functions.
The ideal $I(h,k)$ is a $RO(\Z/2)$-graded Hopf algebroid ideal if and only if the profile function satisfy:

\begin{align}
 \forall i,j \geq 1, \  &  h(i) \leq j + h(i+j) \text{ or } h(j) \leq h(i+j) \label{eqn_ideal1}\\
  \forall i \geq 1, \  j \geq 0, \ &  h(i) \leq j+ k(i+j) \text{ or } k(j) \leq k(i+j) \label{eqn_ideal2}
\end{align}
\end{pro}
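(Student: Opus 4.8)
The plan is to characterize when $I(h,k)$ is a Hopf algebroid ideal by testing the defining conditions of Definition \ref{de_ideal} on the generators $\xi_i^{2^{h(i)}}$ and $\tau_i^{2^{k(i)}}$. Since the counit condition $\epsilon(I)=0$ and the antipode condition $c(I)\subset I$ are automatic (the generators lie in positive $dim$-degree and the antipode preserves such ideals, by an argument paralleling the connected case of Proposition \ref{pro_conjconnexe}), the real content is the coproduct condition $\Delta(I)\subset I\otimes_H A \oplus A\otimes_H I$. By Proposition \ref{pro_conjconnexe}, since $(H\mf_\star^{\Z/2},\ste_\star)$ is connected, it suffices to verify only $\epsilon(I)=0$ and this coproduct condition, so I would reduce the whole problem to analyzing $\Delta$ on the two families of generators.

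The key computational input is the coproduct formulas in $\ste_\star$. As the remark before the proposition notes, these coincide with the odd-primary non-equivariant formulas, namely
\[
\Delta(\xi_n) = \sum_{i+j=n} \xi_i^{2^j}\otimes \xi_j, \qquad \Delta(\tau_n) = \tau_n\otimes 1 + \sum_{i+j=n}\xi_i^{2^j}\otimes \tau_j.
\]
Raising to the $2^{h(i)}$-th power (Frobenius, so the sum survives with exponents multiplied) and to the $2^{k(i)}$-th power respectively, I would expand $\Delta(\xi_i^{2^{h(i)}})$ and $\Delta(\tau_i^{2^{k(i)}})$ as sums of terms $\xi_a^{2^b}\otimes(\,\cdot\,)$ and check, term by term, that each summand lands in $I\otimes_H A + A\otimes_H I$. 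A typical mixed term in $\Delta(\xi_{i+j}^{2^{h(i+j)}})$ looks like $\xi_i^{2^{j+h(i+j)}}\otimes \xi_j^{2^{h(i+j)}}$; this lies in the correct ideal precisely when either the left factor is divisible by $\xi_i^{2^{h(i)}}$, i.e. $h(i)\le j+h(i+j)$, or the right factor is divisible by $\xi_j^{2^{h(j)}}$, i.e. $h(j)\le h(i+j)$. This is exactly the disjunction \eqref{eqn_ideal1}, and the analogous bookkeeping for $\Delta(\tau_{i+j}^{2^{k(i+j)}})$, whose mixed terms involve $\xi_i^{2^{j+k(i+j)}}\otimes\tau_j^{2^{k(i+j)}}$, produces condition \eqref{eqn_ideal2}. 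For the converse, I would exhibit, whenever one of the inequalities fails, a specific summand in the coproduct of a generator that cannot be absorbed, using minimality of $(h,k)$ (via Lemma \ref{lemma_minequiv}) to guarantee that the offending monomial is genuinely nonzero in $A/I$ and not killed by some other relation.

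I would organize the verification as a ``necessary'' and a ``sufficient'' half. For sufficiency, assuming \eqref{eqn_ideal1} and \eqref{eqn_ideal2}, I expand each $\Delta(\text{generator})$ and sort every summand into one of the two pieces using the two clauses of each disjunction; the pure end-terms (e.g. $\xi_{i+j}^{2^{h(i+j)}}\otimes 1$ and $1\otimes\xi_{i+j}^{2^{h(i+j)}}$, plus the extra $\tau_n^{2^k}\otimes 1$ term) are handled directly. For necessity, I assume $I(h,k)$ is a Hopf ideal and read off the inequalities by projecting $\Delta$ onto the relevant graded monomial components, where minimality ensures these projections detect membership faithfully.

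The main obstacle will be the necessity direction and, more subtly, the interaction of the $\tau$-relation $\tau_i^2 = a\xi_{i+1}+(a\tau_0+\sigma^{-1})\tau_{i+1}$ with the ideal structure. Because this relation is non-homogeneous in the naive monomial sense and mixes $\tau$'s with $\xi$'s (and with the coefficients $a,\sigma^{-1}\in H\mf_\star^{\Z/2}$), I must be careful that a coproduct summand I claim is ``not in $I$'' is not secretly rewritten into $I$ via this relation — which is precisely why the hypothesis that $(h,k)$ is \emph{minimal} is needed, so that Lemma \ref{lemma_minequiv} pins down exactly which powers $\tau_i^{2^n}$ lie in $I$. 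Managing this reduction modulo the $\tau$-relation, and confirming that the monomial basis $\mathcal{B}_m$ of Proposition \ref{pro_hypolibrepourhfd} lets me detect nonvanishing coefficients unambiguously, is the delicate part; the Frobenius expansions of the coproducts themselves are routine once that framework is in place.
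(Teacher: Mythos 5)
Your proposal follows essentially the same route as the paper: reduce via connectedness (Proposition \ref{pro_conjconnexe}) to the coproduct condition, expand $\Delta(\xi_n^{2^{h(n)}})$ and $\Delta(\tau_n^{2^{k(n)}})$ by Frobenius into the mixed terms $\xi_i^{2^{j+h(n)}}\otimes\xi_j^{2^{h(n)}}$ and $\xi_i^{2^{j+k(n)}}\otimes\tau_j^{2^{k(n)}}$, and translate membership of each factor in $I(h,k)$ into the two disjunctions, invoking minimality and Lemma \ref{lemma_minequiv} to identify $\tau_j^{2^{k(n)}}\in I(h,k)$ with $k(j)\le k(n)$. Your added caution about the relation $\tau_i^2=a\xi_{i+1}+(a\tau_0+\sigma^{-1})\tau_{i+1}$ and the monomial basis only makes explicit what the paper leaves implicit; the argument is the same.
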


\begin{proof}
We check the equivalence between the three conditions of Definition \ref{de_ideal} and the equations \eqref{eqn_ideal1} and \eqref{eqn_ideal2}. \\

By definition of $I(h,k)$, $ \epsilon(I(h,k)) = 0$. Moreover, Proposition \ref{pro_conjconnexe} applies to the pair $( H \mf_{ \star}^{\Z/2}, \ste_{ \star}/I(h,k))$ giving that $c(I(h,k)) \subset I(h,k)$ is automatically satisfied if $ \Delta( I(h,k)) \subset I(h,k) \otimes \ste_{ \star} \oplus \ste_{ \star} \otimes I(h,k)$. \\

Thus, it suffices to show that $ \Delta( I(h,k)) \subset I(h,k) \otimes \ste_{ \star} \oplus \ste_{ \star} \otimes I(h,k)$ is equivalent to
\begin{align}
 \forall i,j \geq 1, &  h(i) \leq j + h(i+j) \text{ or } h(j) \leq h(i+j) \label{hypotheseh} \\
  \forall i \geq 1, j \geq 0, &  h(i) \leq j+ k(i+j) \text{ or } k(j) \leq k(i+j).
\end{align}

 We check that the conditions are equivalent, using the known structure of the $\Z/2$-equivariant dual Steenrod algebra.
The coproduct $ \Delta$ is an algebra morphism, so $I(h,k)$ satisfies $ \Delta( I(h,k)) \subset I(h,k) \otimes \ste_{ \star} \oplus \ste_{ \star} \otimes I(h,k)$ if and only if, for all $i \geq 1$, $ \Delta( \xi_i^{2^{h(i)}}) \in I(h,k) \otimes  \ste_{ \star}  \oplus  \ste_{ \star}  \otimes I(h,k)$ and for all $i \geq 0$, $ \Delta( \tau_i^{2^{k(i)}}) \in I(h,k) \otimes \ste_{ \star}  \oplus \ste_{ \star} \otimes I(h,k)$. But
$$ \Delta( \xi_n^{2^{h(n)}}) = \sum_{i+j=n} \xi_i^{2^{j+h(n)}} \otimes \xi_j^{2^{h(n)}}$$
and
$$ \Delta ( \tau_n^{2^{k(n)}}) = \sum_{i+j=n} \xi_i^{2^{j+k(n)}} \otimes \tau_j^{2^{k(n)}}.$$
By the first equation, $ \Delta( \xi_i^{2^{h(i)}}) \in I(h,k) \otimes  \ste_{ \star}  \oplus  \ste_{ \star}  \otimes I(h,k)$ is equivalent to equation \eqref{hypotheseh}.
For the second one, $ \tau_n^{2^{k(n)}} \in I(h,k)$ by definition of $I(h,k)$. Now, for all $i,j$ such that $i+j=n$, either $ \xi_i^{2^{j+k(n)}} \in I(h,k)$ or $ \tau_j^{2^{k(n)}} \in I(h,k)$. By minimality, the second condition is equivalent to $k(j) \leq k(n)$.
\end{proof}

\subsection{Freeness of quotients Hopf algebroids}

We now address the problem raised in Remark \ref{remarque_quotient_non_libre}. We will give a numerical condition on a pair of profile functions $(h,k)$ to ensure that the associated quotient algebra $\ste_{\star}/I(h,k)$ is free over $H\mf_{\star}^{\Z/2}$.

\begin{de} \label{de_libre}
We say that a pair of profile functions $(h,k)$ is {\em free} if it satisfies the following property:
$ \forall i \geq 0$, $ m \geq k(i)$, $j \leq m$,
$$k(i+m)= 0$$
and
$$ h(i+j) \leq m-j.$$
\end{de}

\begin{lemma} \label{lemma_condlibre}
Let $(h,k)$ be a free pair of profile functions. Then, for all $i \geq 0$ and $m \geq 0$, the following conditions are equivalent:
\begin{enumerate}
 \item $ \tau_i^{2^m} \in I(h,k)$,
\item in the decomposition of $ \tau_i^{2^m} $ in the monomial basis, i.e. $ \tau_i^{2^m}= \sum_j h_j x_j$, for some $h_j \in H \mf_{ \star}^{\Z/2}$ and $x_j \in \mathcal{B}_m$, we have that if $j \geq 0$, then $x_j \in I(h,k)$,
 \item $ \tau_{i+m} \in I(h,k)$ and $ \forall 0 \leq j \leq m-1$, $ \xi_{i+m-j}^{2^{j}} \in I(h,k)$.
\end{enumerate}
\end{lemma}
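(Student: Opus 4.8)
### Proof Strategy for Lemma \ref{lemma_condlibre}

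The plan is to establish the cycle of implications $(3) \Rightarrow (2) \Rightarrow (1) \Rightarrow (3)$, exploiting the defining relation $\tau_i^2 = a\xi_{i+1} + (a\tau_0 + \sigma^{-1})\tau_{i+1}$ from the Hu--Kriz presentation to iteratively rewrite powers of $\tau_i$. The key computational device is to understand how repeated squaring interacts with the monomial basis $\mathcal{B}_m$. First I would set up the induction on $m$: the base case $m=0$ is essentially the definition of $I(h,k)$ (for $(1) \Leftrightarrow (3)$ when $m=0$, condition (3) reduces to $\tau_i \in I(h,k)$, i.e. $k(i)=0$), so the content is in the inductive step where we pass from $\tau_i^{2^{m-1}}$ to $\tau_i^{2^m}$ by squaring.

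The heart of the argument is a careful expansion of $\tau_i^{2^m}$ in the monomial basis. Starting from the relation above and squaring repeatedly, one finds that $\tau_i^{2^m}$ expands (modulo lower-order corrections) into a sum whose leading terms involve $\tau_{i+m}$ together with products of the form $\xi_{i+m-j}^{2^j}$ for $0 \le j \le m-1$; this is precisely the list appearing in condition (3). I would make this expansion precise by an auxiliary sub-induction, tracking at each squaring step how a factor $\tau_\ell^{2^s}$ becomes $a\xi_{\ell+1}^{2^s} + (a\tau_0 + \sigma^{-1})^{2^s}\tau_{\ell+1}^{2^s}$ (using that we are in characteristic $2$ so squaring is additive on the relevant expressions). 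The \emph{free} hypothesis on $(h,k)$ enters exactly here: the conditions $k(i+m)=0$ and $h(i+j) \le m-j$ guarantee that each monomial $x_j$ produced in the expansion either lies in $I(h,k)$ or has a coefficient $h_j$ that does not obstruct membership, so that the basis expansion genuinely detects membership in the ideal. This gives $(1) \Leftrightarrow (2)$, since freeness ensures that $\tau_i^{2^m} \in I(h,k)$ if and only if every basis monomial appearing with a nonzero coefficient already lies in the ideal.

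For the implication $(2) \Rightarrow (3)$ I would read off the explicit leading monomials from the expansion and invoke the minimality/structure of $I(h,k)$ to conclude that each of $\tau_{i+m}$ and $\xi_{i+m-j}^{2^j}$ must individually lie in $I(h,k)$; conversely $(3) \Rightarrow (1)$ follows by substituting these memberships back into the expansion and checking every term lands in $I(h,k) \otimes \ste_\star + \ste_\star \otimes I(h,k)$, here just $I(h,k)$. I expect the main obstacle to be the bookkeeping in the repeated-squaring expansion: controlling the coefficients $h_j \in H\mf_\star^{\Z/2}$ (which involve the non-invertible elements $a$ and $\sigma^{-1}$) and verifying that the \emph{free} inequalities $h(i+j) \le m-j$ are exactly what is needed to prevent a spurious cancellation or a surviving term outside the ideal. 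The subtlety is that, because $H\mf_\star^{\Z/2}$ is not a field (cf. remark \ref{remarque_quotient_non_libre}), one cannot simply divide out coefficients, so the freeness condition must be used to ensure the monomial decomposition interacts cleanly with the ideal membership.
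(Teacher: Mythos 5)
Your overall strategy coincides with the paper's: iterate the relation $\tau_i^2 = a\tau_{i+1} + \eta_R(\sigma^{-1})\xi_{i+1}$ to obtain the explicit expansion
$$\tau_i^{2^m} = a^{2^m-1}\,\tau_{i+m} + \sum_{j=1}^{m} a^{2^m-2^j}\,\eta_R(\sigma^{-1})^{2^{j-1}}\,\xi_{i+m-j+1}^{2^j},$$
observe that the monomials occurring are exactly those listed in condition (3) (giving $(2)\Leftrightarrow(3)$ and $(2)\Rightarrow(1)$ immediately), and reserve the freeness hypothesis for the hard direction. Up to the ordering of the implications, this is the paper's proof.

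However, there is a genuine gap at precisely the point you flag as ``the main obstacle,'' namely $(1)\Rightarrow(2)$. You invoke the inequalities $k(i+m)=0$ and $h(i+j)\le m-j$ as if they were available unconditionally, but Definition \ref{de_libre} only guarantees them when $m \ge k(i)$. Since the lemma does not assume the pair $(h,k)$ is minimal, it can happen that $\tau_i^{2^m}\in I(h,k)$ with $m < k(i)$, and in that regime freeness tells you nothing directly about the monomials in the expansion. The paper handles this by a case split combined with an induction on $(i,m)$: when $m < k(i)$, one writes $\tau_i^{2^m} = a^{2^{m-1}}\tau_{i+1}^{2^{m-1}} + \eta_R(\sigma^{-1})^{2^{m-1}}\xi_{i+1}^{2^{m-1}}$ and uses that $I(h,k)$ is generated by \emph{monomials} to conclude that each summand lies in $I(h,k)$ separately, then applies the induction hypothesis to $\tau_{i+1}^{2^{m-1}}$. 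Your sketch mentions an induction on $m$ but never articulates this dichotomy, and the monomial-generation argument --- which is the structural fact preventing exactly the ``spurious cancellation'' you worry about --- is absent. Without it, the claim that ``the basis expansion genuinely detects membership in the ideal'' is unsupported in the case $m < k(i)$, so the equivalence $(1)\Leftrightarrow(2)$ is not yet proved.
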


\begin{proof}
$2 \Leftrightarrow 3$: write $ \tau_i^{2^m}$ in the monomial basis. By the relation $ \tau_i^2 = a \tau_{i+1} + \eta_R( \sigma^{-1}) \xi_{i+1}$ in the $\Z/2$-equivariant dual Steenrod algebra, thus $ \tau_i^{2^m} = a^{2^{m-1}} \tau_{i+1}^{2^{m-1}} + \eta_R( \sigma^{-1})^{2^{m-1}} \xi_{i+1}^{2^{m-1}}$.
By induction on $m$, we find

\begin{equation} \label{eqntau2m}
 \tau_i^{2^m} = a^{2^m-1} \tau_{i+m} + \sum_{j=1}^m a^{2^m-2^j} \eta_R( \sigma^{-1})^{2^{j-1}} \xi_{i+m-j+1}^{2^j},
\end{equation}

the result follows.

$2 \Rightarrow 1 $: suppose $2$. Then equation (\eqref{eqntau2m}) implies that $ \tau_i^{2^m} \in I(h,k)$.

$ 1 \Rightarrow 2$ : if $ \tau_i^{2^m} \in I(h,k)$, suppose that the implication is true for all $m'<m$ and $i'<i$. Then, one of the following assertions is satisfied:
\begin{itemize}
\item $m \geq k(i)$, and thus freeness of $(h,k)$ implies that every element of the monomial basis appearing in equation \eqref{eqntau2m} are already in $I(h,k)$,
\item  $m < k(i)$. In that case, $ a^{2^{m-1}} \tau_{i+1}^{2^{m-1}} + \eta_R( \sigma^{-1})^{2^{m-1}} \xi_{i+1}^{2^{m-1}} \in I(h,k)$. The ideal $I(h,k)$ is generated by monomials, so $ \tau_{i+1}^{2^{m-1}} \in I(h,k)$ and $  \eta_R( \sigma^{-1})^{2^{m-1}} \xi_{i+1}^{2^{m-1}} \in I(h,k)$. The powers of the elements $ \tau_{i'}^{2^{m'}}$ appearing in this expression satisfies $m'<m$ and $i'<i$, so their decomposition in the monomial basis consists only in terms of $I(h,k)$ by induction hypothesis.
\end{itemize}
In each case, the induction step holds, concluding the proof.
\end{proof}

\begin{pro} \label{pro_quotientlibre}
If $(h,k)$ is a free pair of profile functions, then the $ H \mf_{ \star}^{\Z/2}$-module $ \ste_{ \star} / I(h,k)$ is free.
Moreover, a basis for this $H \mf_{ \star}^{\Z/2}$-module is
$$ \mathcal{B}_{(h,k)} = \{ [b_i] | b_i \in \mathcal{B}_m, b_i \not\in I(h,k) \} ,$$

or equivalently consists in elements of the monomial basis which are of the form $$ \Pi_{i \geq 0} \xi_{i+1}^{h_i} \tau_i^{ \epsilon_i}$$ where $ \forall i \geq 0$, $ \epsilon_i \leq 2^{k(i)}$ and $h_{i+1} \leq 2^{h(i+1)}$, $ \epsilon_i =0$ or $ 1$ and the product is finite.
\end{pro}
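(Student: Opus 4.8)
The plan is to reduce everything to a single statement: that $I(h,k)$ is a \emph{based} $H\mf_\star^{\Z/2}$-submodule of $\ste_\star$, i.e.
$$I(h,k) = H\mf_\star^{\Z/2}\{\, b \in \mathcal{B}_m \mid b \in I(h,k)\,\}.$$
Granting this, the conclusion is immediate. By Proposition \ref{pro_hypolibrepourhfd} the module $\ste_\star$ is free on the monomial basis $\mathcal{B}_m$, so the partition $\mathcal{B}_m = (\mathcal{B}_m \cap I(h,k)) \sqcup (\mathcal{B}_m \setminus I(h,k))$ splits $\ste_\star$ as a direct sum of based submodules; quotienting by $I(h,k)$ leaves precisely the free module on $\mathcal{B}_{(h,k)} = \{[b] \mid b \in \mathcal{B}_m \setminus I(h,k)\}$, the classes $[b]$ being linearly independent because the two sub-bases are disjoint.

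The inclusion $\supseteq$ is trivial, so the content is $\subseteq$; my first move is to replace the defining generators of $I(h,k)$ by honest monomials. The $\xi_i^{2^{h(i)}}$ already lie in $\mathcal{B}_m$. For $\tau_i^{2^{k(i)}}$ I would expand by \eqref{eqntau2m} with $m=k(i)$ and invoke freeness (Definition \ref{de_libre}): the choice $j=0$ gives $k(i+k(i))=0$, so $\tau_{i+k(i)}$ is itself a generator, while for $1\le j\le k(i)$ the inequality $h(i+k(i)-j+1)\le j-1<j$ (Definition \ref{de_libre} with base index $i$, $m=k(i)$, and $l=k(i)-j+1$) shows that $\xi_{i+k(i)-j+1}^{2^{j}}$ is divisible by the generator $\xi_{i+k(i)-j+1}^{2^{h(i+k(i)-j+1)}}$. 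This is exactly the implication $1\Rightarrow 2$ of Lemma \ref{lemma_condlibre}, and it shows that $I(h,k)$ is the ideal generated by the \emph{honest monomial generators}
$$G' = \{\, \xi_j^{2^{h(j)}}\,\} \cup \{\, \tau_j \mid k(j)=0\,\}.$$

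It then remains to prove that the ideal generated by the monomials $G'$ is spanned over $H\mf_\star^{\Z/2}$ by the monomials of $\mathcal{B}_m$ it contains; since $\ste_\star$ is commutative and $H$-spanned by $\mathcal{B}_m$, it suffices to show that each product $b\cdot\gamma$ with $b\in\mathcal{B}_m$, $\gamma\in G'$ rewrites in the basis $\mathcal{B}_m$ as a combination of monomials each divisible by some element of $G'$. Multiplication by $\xi_j^{2^{h(j)}}$ just raises one $\xi$-exponent and yields a single such monomial, so that case is immediate. The case $\gamma=\tau_j$ with $k(j)=0$ is the crux: if $b$ already carries $\tau_j$, the product acquires a factor $\tau_j^2$, which the relation $\tau_j^2 = a\tau_{j+1} + \eta_R(\sigma^{-1})\xi_{j+1}$ rewrites using $\tau_{j+1}$ and $\xi_{j+1}$. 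Here I use that freeness forces, from $k(j)=0$ (take $m=1$ and iterate), the vanishing $k(j+n)=h(j+n)=0$ for all $n\ge1$; hence $\xi_{j+1},\tau_{j+1}\in G'$, the $\xi_{j+1}$-term is divisible by $\xi_{j+1}\in G'$, and the $\tau_{j+1}$-term is either divisible by $\tau_{j+1}\in G'$ or triggers a further $\tau_{j+1}^2$-reduction. Because each reduction strictly raises the $\tau$-index while $b$ carries only finitely many $\tau$'s, the recursion terminates with every monomial divisible by a generator in $G'$.

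The main obstacle is exactly this last recursion: since $\tau_j^2 = a\tau_{j+1}+\eta_R(\sigma^{-1})\xi_{j+1}$, the ideal $I(h,k)$ is not a monomial ideal in a polynomial ring, so multiplying a monomial of $I(h,k)$ by an algebra generator can create genuinely new monomials of higher index, and the freeness hypothesis is precisely engineered (and already distilled in Lemma \ref{lemma_condlibre}) to keep them inside $I(h,k)$. Once $I(h,k)=H\mf_\star^{\Z/2}\{\mathcal{B}_m\cap I(h,k)\}$ is established, the explicit form of $\mathcal{B}_{(h,k)}$ is read off from the divisibility characterization $\mathcal{B}_m\cap I(h,k)=\{b\in\mathcal{B}_m \mid b \text{ is divisible by some }\gamma\in G'\}$: a monomial lies outside $I(h,k)$ exactly when every $\xi_{i+1}$-exponent is $<2^{h(i+1)}$ and $\tau_i$ occurs only at indices with $k(i)\ge1$, which is the stated description.
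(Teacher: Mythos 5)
Your proof is correct and follows essentially the same route as the paper: both arguments reduce the claim to showing that $I(h,k)$ is spanned over $H\mf_{\star}^{\Z/2}$ by the monomial basis elements it contains, with equation \eqref{eqntau2m}, lemma \ref{lemma_condlibre} and the freeness condition as the key inputs, after which the quotient is visibly free on the complementary monomials. The only difference is that you make explicit a step the paper glosses over --- namely that multiplying a monomial of $I(h,k)$ by an arbitrary element of $\ste_{\star}$ yields, after rewriting the $\tau_j^2$ terms, only monomials still divisible by a generator, which you justify by a terminating recursion on the $\tau$-index using $k(j)=0\Rightarrow k(j+n)=h(j+n)=0$; this adds rigor but is not a different method.
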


\begin{proof}
Consider the canonical $H \mf_{ \star}^{\Z/2}$-module morphism
$$  \phi : H \mf_{ \star}^{\Z/2} \{ \mathcal{B}_{(h,k)} \} \surj \ste_{ \star} / I(h,k).$$
Suppose that $(h,k)$ is free, we will show that $ \phi$ is a $H \mf_{ \star}^{\Z/2}$-module isomorphism.

Let $x \in Ker( \phi)$, {\em i.e.} $x = \sum h_ib_i$ for $h_i \in H \mf_{ \star}^{\Z/2}$ and $b_i \in \mathcal{B}_{(h,k)}$ such that $x \in I(h,k)$. Consequently, there exists  $x_i,y_i \in \ste_{ \star}$ such that
$$ x = \sum_{i \geq 0} x_i \xi_i^{2^{h(i)}} + y_i \tau_i^{2^{k(i)}}.$$

Thus for all $ i \geq 0$, 
\begin{itemize}
 \item each element of the monomial basis appearing in the decomposition of $x_i \xi_i^{2^{h(i)}}$ is an element of $I(h,k)$ because $ \xi_i^{2^{h(i)}} \in I(h,k)$,
 \item by Lemma \ref{lemma_condlibre}, each element of the monomial basis appearing in the decomposition of $ \tau_i^{2^{k(i)}}$ is in $I(h,k)$, so the same is true for element of the monomial basis appearing in the decomposition of $y_i \tau_i^{2^{k(i)}}$.
\end{itemize}

Consequently, we get  $x \in H \mf_{ \star}^{\Z/2} \{ \mathcal{B}_{(h,k)} \} \cap H \mf_{ \star}^{\Z/2} \{ \mathcal{B}_m \backslash \mathcal{B}_{(h,k)} \}$, thus $x = 0$.
\end{proof}

\section{Notable examples and properties} \label{sec:Examples}

\subsection{Some examples of quotient Hopf algebroids of $\ste_{\star}$}

\begin{de}
For a map $h : \mathbb{N} - \{0\} \rightarrow \mathbb{N} \cup \{\infty\}$, we denote by $J(h)$ the ideal of the modulo $2$ non-equivariant Steenrod algebra generated by the elements $\xi_i^{2^{h(i)}}$.
\end{de}

\begin{rk}
By Adams-Margolis \cite[Theorem 3.3]{AM74}, we have a necessary and sufficient condition on  $h$ such that $J(h)$ is a Hopf algebra ideal of the non-equivariant Steenrod algebra.
\end{rk}

\begin{pro} \label{pro_lienclassiqueequiv}
There is an injection of partially ordered sets
\begin{eqnarray*}
  \{ \text{quotient Hopf algebras of } \ste_*^2 \} & \rightarrow & \{  \text{quotient Hopf algebroids of } \ste_{ \star} \} \\
\ste_*^2/J(h) & \mapsto & \ste_{ \star}/I(h,0)
\end{eqnarray*}
where the partial order is induced by inclusion of ideals.
\end{pro}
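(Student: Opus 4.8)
The plan is to establish that the stated assignment is a well-defined map of posets which is injective. First I would make precise the source and target. The source consists of quotient Hopf algebras $\ste_*^2/J(h)$ of the non-equivariant mod $2$ Steenrod algebra, which by \cite[Theorem 3.3]{AM74} correspond exactly to those maps $h : \mathbb{N} \backslash \{0\} \rightarrow \mathbb{N} \cup \{\infty\}$ satisfying the non-equivariant profile condition $\forall i,j \geq 1$, $h(i) \leq j + h(i+j)$ or $h(j) \leq h(i+j)$ (the dual form of the Milnor--Moore condition). The target consists of quotient Hopf algebroids $\ste_{\star}/I(h,k)$ of the $\Z/2$-equivariant dual Steenrod algebra, classified by proposition \ref{proihk} via conditions \eqref{eqn_ideal1} and \eqref{eqn_ideal2}.

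The key observation is that I would send $h$ to the equivariant profile function pair $(h, 0)$, where $0$ denotes the constant function $k \equiv 0$ on $\mathbb{N} \backslash \{0\}$, together with the appropriate value of $h$ at the argument $0$ forced by the equivariant data. The first step is then to verify that $I(h,0)$ is a $RO(\Z/2)$-graded Hopf algebroid ideal, i.e. that the pair $(h,0)$ satisfies the hypotheses of proposition \ref{proihk}. Condition \eqref{eqn_ideal1} is literally the non-equivariant condition from \cite[Theorem 3.3]{AM74}, hence holds by assumption on $h$. For condition \eqref{eqn_ideal2}, since $k(j) = 0$ for all $j$, the disjunct $k(j) \leq k(i+j)$ is automatically satisfied (both sides are $0$), so the condition holds trivially. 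Thus the map lands in the set of genuine quotient Hopf algebroids.

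Next I would check that the assignment is order-preserving and order-reflecting. Since the partial orders on both sides are induced by inclusion of the generating ideals, and since $J(h) \subseteq J(h')$ in $\ste_*^2$ corresponds to the pointwise inequality $h \leq h'$ (by minimality of the $\xi$-generators, there being no $\tau$-relations to interfere in the purely even non-equivariant case), I would argue that $h \leq h'$ if and only if $I(h,0) \subseteq I(h',0)$. The forward direction is immediate from the definition of $I(h,k)$ as generated by the $\xi_i^{2^{h(i)}}$; the reverse direction uses that the monomial basis of proposition \ref{pro_hypolibrepourhfd} detects each generator $\xi_i^{2^{h(i)}}$ independently, so that membership of $\xi_i^{2^{n}}$ in $I(h',0)$ forces $n \geq h'(i)$. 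Injectivity then follows, since $I(h,0) = I(h',0)$ forces $h = h'$ by reading off the minimal power of each $\xi_i$ lying in the ideal, exactly as in the proof of lemma \ref{lemma_minequiv}.

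The main obstacle I anticipate is the careful bookkeeping between the non-equivariant indexing convention of \cite{AM74} and the equivariant indexing used here, together with the role of the $\tau$-generators. The subtlety is that in the equivariant dual Steenrod algebra the relation $\tau_i^2 = a\xi_{i+1} + (a\tau_0 + \sigma^{-1})\tau_{i+1}$ links the $\tau$ and $\xi$ generators, so I must confirm that setting $k \equiv 0$ does not inadvertently drag extra $\xi$-powers into the ideal beyond those generated by the $\xi_i^{2^{h(i)}}$ themselves; concretely, that $I(h,0) \cap \F[\xi_i | i \geq 1]$ recovers precisely $J(h)$ after applying the restriction morphism $r$ of proposition \ref{pro_comparaisonste}. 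I would handle this by invoking the freeness and minimality results already established (lemma \ref{lemma_minequiv} and proposition \ref{pro_quotientlibre}) to guarantee that the monomial basis cleanly separates the contributions of the $\xi$ and $\tau$ generators, so that no collapsing occurs and the correspondence is genuinely injective.
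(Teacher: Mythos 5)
Your proposal follows essentially the same route as the paper: identify the source with the Adams--Margolis profile functions $h$, observe that condition \eqref{eqn_ideal1} of proposition \ref{proihk} is exactly the Adams--Margolis condition while condition \eqref{eqn_ideal2} becomes vacuous when $k \equiv 0$, and conclude well-definedness from proposition \ref{proihk}; the paper's proof is precisely this and leaves order-preservation and injectivity to ``by construction.'' One caution about the extra detail you supply for injectivity: you invoke proposition \ref{pro_quotientlibre} (and implicitly lemma \ref{lemma_condlibre}) to control the monomial expansion of elements of $I(h,0)$, but those results require the pair to be \emph{free} in the sense of definition \ref{de_libre}, and the pair $(h,0)$ is free only when $h \equiv 0$ (take $i=j=m=0$ in the definition to force $h(i) \leq 0$). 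The subtlety you correctly identify --- that the relation $\tau_i^2 = a\xi_{i+1} + \eta_R(\sigma^{-1})\tau_{i+1}$ might drag extra $\xi$-powers into $I(h,0)$ --- should instead be handled directly: every pure-$\xi$ monomial produced by reducing a product involving the generators $\tau_j$ carries a coefficient divisible by $a$ (the only branch of the relation that eliminates the last $\tau$ is $a\xi_{k+1}$), and since $1$ is not divisible by $a$ in $H\mf_{\star}^{\Z/2}$, a basis element $\xi_i^{2^n}$ can lie in $I(h,0)$ only via a multiple of $\xi_i^{2^{h(i)}}$, forcing $n \geq h(i)$. With that repair your order-reflection and injectivity argument goes through and in fact supplies detail the paper omits.
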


\begin{proof}
Let $ \mathcal{B}_*$ be a Hopf algebra quotient of $ \ste^2_*$. By the main result of \cite{AM74}, we know that there exists a unique $h : \mathbb{N} \backslash \{ 0 \} \rightarrow \mathbb{N} \cup \{ \infty \}$ satisfying the conditions of Proposition \ref{proihk} for $k=0$ (the second one becomes trivial for $k=0$ and the first one becomes precisely the one asserted by \cite{AM74}) such that $ \mathcal{B}_*$ is isomorphic to
$ \ste^2_* / J(h)$. \\
Thus, Proposition \ref{proihk} implies that the application is well defined. The fact that it preserves the partial order is true by construction.
\end{proof}

The previous proposition allows us to define some examples of quotient Hopf algebroids and, by duality (Proposition \ref{pro_dualite_operationcoop} under Hypothesis \ref{hypothese_libre} which is satisfied by Proposition \ref{corr_hypolibertehfd}), this provides some particular sub-algebras of the $\Z/2$-equivariant Steenrod algebra.

\begin{de}
Define $ \widetilde{ \ste(n)_{ \star}} = \ste_{ \star} / I(h_n,0)$ for $h_n = (n-1,n-2,n-3, \hdots, 0,0 \hdots )$.
And denote $ \widetilde{ \ste(n)^{ \star}} = \Hom_{ H \mf_{ \star}^{\Z/2}}( \widetilde{ \ste(n)_{ \star}} , H \mf_{ \star}^{\Z/2})$ their dual algebras.
\end{de}

These algebras are analogous to the squares of the classical $\ste(n)^* \subset \ste^*_2$. We now define genuine analogues to the classical sub-Hopf algebras $\ste^*(n)$ and $\mathcal{E}^*(n)$ of $\ste^*_2$, which are respectively $\ste^{\star}(n)$ and $\mathcal{E}^{\star}(n)$.

\begin{de}
\begin{itemize}
\item Let $ \mathcal{E}(n)_{ \star} = \ste_{ \star} / I(0,k_n)$ for $k_n=(n,n-1,n-2, \hdots,0,0, \hdots)$.
Denote by $ \mathcal{E}( \infty)_{ \star}$ the quotient Hopf algebroid  $ \ste_{ \star} / I(0,k)$ for $k=( \infty, \infty, \infty, \hdots ) $,
and $ \mathcal{E}^{ \star}(n)$ and $ \mathcal{E}^{ \star}( \infty)$ their duals algebras.
\item Denote $  \ste(n)_{ \star} = \ste_{ \star} / I(h_n,k_n)$ for $h_n=(n-1,n-2,n-3, \hdots,0,0 \hdots)$ and $k_n=(n,n-1,n-2, \hdots,0,0, \hdots)$ and $  \ste(n)^{ \star}$ its dual algebra.
\end{itemize}
\end{de}

\begin{rk}
 Proposition \ref{pro_quotientlibre} and the fact that the profile functions defining the $ \mathcal{E}( \infty)_{ \star}$, $ \mathcal{E}(n)_{ \star}$, and $  \ste(n)_{ \star}$ are free implies that these are free $H \mf_{ \star}^{\Z/2}$-module, with basis $ \tau_0^{ \epsilon_0}, \hdots \tau_n^{ \epsilon_n}$, for some $ \epsilon_i \in \{0,1 \}$ ($n = \infty$ for $ \mathcal{E}( \infty)_{ \star}$).
\end{rk}

\subsection{Cofreeness $(H\mf_{\star}^{\Z/2},\ste_{\star})$ over its quotients}

We now answer the question of cofreeness of the various quotient Hopf algebroids of $(H\mf_{\star}^{\Z/2},\ste_{\star})$ over one another. 
It turns out to have a very general answer when modules are concentrated in positive {\em twists}.
We first define a version of the $\Z/2$-equivariant dual Steenrod algebra concentrated in positive {\em twists}.

\begin{de}
Let $( H_{ \geq}, \ste_{ \geq})$ be the Hopf algebroid $$( \F[a, \sigma^{-1} ], \F[a, \sigma^{-1} ][ \tau_i, \xi_i+1,i \geq 0]/ \tau_i^2 = a \tau_{i+1} + \eta_R( \sigma^{ -1}) \xi_{i+1})$$ with the same formulae as for the $\Z/2$-equivariant dual Steenrod algebra for the structure maps, that is:\begin{align*}
\Delta( \xi_n) = & \sum_{i=0}^n \xi_{n-i}^{p^i} \otimes \xi_i \\
\Delta( \tau_n) = & \sum_{i=0}^n \xi_{n-i}^{p^i} \otimes \tau_i + \tau_n \otimes 1
\end{align*}
\end{de}

We now recall the construction of the extension of objects for a small groupoid.

\begin{de}
 Let $ \mathcal{O}$ be the set of objects and $ \mathcal{M}$  the set of morphisms of a small groupoid. A a left $ (\mathcal{O}, \mathcal{M})$-module structure on a set $ \mathcal{Y}$ over $ \mathcal{O}$ is a map $$ \psi : \mathcal{M} \times_{ \mathcal{O} } \mathcal{Y} \rightarrow \mathcal{Y}$$ where $ \mathcal{M}$ is seen as a set over $\mathcal{O}$ via the source morphism which is compatible with projection, associative and unital.
\end{de}

\begin{pro} \label{pro_groupoidgeoffrey}
Let $ \mathcal{Y}$ be a left $( \mathcal{O}, \mathcal{M})$-module. Then there is a natural groupoid structure on $( \mathcal{Y}, \mathcal{M} \times_{ \mathcal{O}} \mathcal{Y})$ defined by:
\begin{enumerate}
\item identity: $ \mathcal{Y} = \mathcal{O} \times_{ \mathcal{O}} \mathcal{Y}  \stackrel{e \times \mathcal{Y}}{\longrightarrow}  \mathcal{M} \times_{ \mathcal{O}} \mathcal{Y}$ where $e$ is the identity,
\item source: $pr_{ \mathcal{Y}}$
\item target: $ \psi$
\item inverse: $c \ pr_{ \mathcal{M}} \times_{ \mathcal{O}} \psi : \mathcal{M} \times_{ \mathcal{O}} \mathcal{Y} \rightarrow \mathcal{M} \times_{ \mathcal{O}} \mathcal{Y}$ where $c$ is the inverse in the groupoid,
\item composition: $ \circ \times \mathcal{Y} : \mathcal{M} \times_{ \mathcal{O}} \mathcal{Y} \times_{ \mathcal{Y}} \mathcal{M} \times_{ \mathcal{O}} \mathcal{Y} \rightarrow \mathcal{M} \times_{ \mathcal{O}} \mathcal{Y}$.
\end{enumerate}
\end{pro}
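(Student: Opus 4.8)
The plan is to recognise $(\mathcal{Y}, \mathcal{M} \times_{\mathcal{O}} \mathcal{Y})$ as the \emph{action groupoid} (translation groupoid) attached to the module $\mathcal{Y}$, and to prove the statement by verifying the groupoid axioms directly, reducing each one to the corresponding axiom for the groupoid $(\mathcal{O}, \mathcal{M})$ together with the defining properties of $\psi$. I will write $s, t : \mathcal{M} \rightarrow \mathcal{O}$ for the source and target of $(\mathcal{O},\mathcal{M})$, keep $e$, $c$ and $\circ$ for its identity, inverse and composition, and write $p : \mathcal{Y} \rightarrow \mathcal{O}$ for the structural projection. The module axioms say precisely that $p \circ \psi = t \circ pr_{\mathcal{M}}$ (compatibility with projection), $\psi(e(p(y)), y) = y$ (unitality), and $\psi(m' \circ m, y) = \psi(m', \psi(m, y))$ (associativity), the last two making sense exactly on the relevant fibre products.

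First I would check that every structure map lands in the correct fibre product. An element of $\mathcal{M} \times_{\mathcal{O}} \mathcal{Y}$ is a pair $(m,y)$ with $s(m) = p(y)$, to be viewed as an arrow $y \rightarrow \psi(m,y)$; compatibility with projection gives $p(\psi(m,y)) = t(m)$, so source $pr_{\mathcal{Y}}(m,y)=y$ and target $\psi(m,y)$ are both well defined. The identity $e \times \mathcal{Y}$ sends $y$ to $(e(p(y)), y)$, a legitimate element since $s(e(p(y))) = p(y)$, and unitality of $\psi$ shows it is a genuine loop at $y$. For composition, an element of $\mathcal{M} \times_{\mathcal{O}} \mathcal{Y} \times_{\mathcal{Y}} \mathcal{M} \times_{\mathcal{O}} \mathcal{Y}$ is a composable pair $((m_1,y_1),(m_2,y_2))$ with $\psi(m_1,y_1)=y_2$; compatibility with projection then forces $t(m_1) = p(y_2) = s(m_2)$, so $m_2 \circ m_1$ is defined and $s(m_2 \circ m_1) = s(m_1) = p(y_1)$, whence $(m_2 \circ m_1, y_1) \in \mathcal{M} \times_{\mathcal{O}} \mathcal{Y}$ is the value of $\circ \times \mathcal{Y}$.

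Next I would verify the equational axioms. For three composable arrows $(m_1,y_1),(m_2,y_2),(m_3,y_3)$ both bracketings of the composite equal $(m_3 \circ m_2 \circ m_1, y_1)$, using associativity of $\circ$ in $\mathcal{M}$, while the well-definedness of the repeated fibre products (for instance $\psi(m_2\circ m_1, y_1)=\psi(m_2,\psi(m_1,y_1))=y_3$) is exactly associativity of $\psi$. The unit laws reduce to unitality of $\circ$ on the $\mathcal{M}$-coordinate together with unitality of $\psi$ on the $\mathcal{Y}$-coordinate. Finally, for the inverse I would show that $(m,y) \mapsto (c(m), \psi(m,y))$ is a two-sided inverse: its source is $\psi(m,y)$ and its target is $\psi(c(m), \psi(m,y)) = \psi(c(m) \circ m, y) = \psi(e(s(m)), y) = y$ by associativity then unitality of $\psi$, so it is an arrow $\psi(m,y) \rightarrow y$; composing it with $(m,y)$ on either side and using $m \circ c(m) = e(t(m))$, $c(m) \circ m = e(s(m))$ together with the $\psi$-axioms yields the identity at $\psi(m,y)$ and at $y$ respectively.

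The verification is entirely formal, so there is no genuine obstacle; the only points requiring care are the systematic use of the compatibility identity $p \circ \psi = t \circ pr_{\mathcal{M}}$, which is what makes the fibre products in the domains of the identity, composition and inverse maps well defined, and the interplay between associativity of $\circ$ and associativity of $\psi$ when checking that composition is associative. One could alternatively phrase the whole argument as an instance of the Grothendieck construction applied to the functor $(\mathcal{O},\mathcal{M}) \rightarrow \mathrm{Set}$ encoded by $\psi$, but the direct check above is the most transparent.
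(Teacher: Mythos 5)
Your proof is correct and follows exactly the route the paper takes: the paper's proof simply states that one checks the groupoid axioms one by one, and your argument carries out that verification explicitly, reducing each axiom to the corresponding axiom of $(\mathcal{O},\mathcal{M})$ together with the unitality, associativity, and projection-compatibility of $\psi$. The only difference is that you supply the details the paper leaves to the reader.
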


\begin{proof}
It suffices to check the axioms of a groupoid one by one.
\end{proof}

\begin{lemma} \label{lemma_comod}
The algebra $H \mf_{ \star}^{\Z/2}$ is a $(H_{ \geq}, \ste_{ \geq})$-comodule algebra, via the left unit, {\em i.e.} $ \psi(a) = a \otimes 1$, $ \psi( \sigma^{-1}) = \sigma^{-1} \otimes 1 + a \otimes \tau_0$, and $ \psi( \sigma^{2}) = \sigma^2 \otimes 1$.
\end{lemma}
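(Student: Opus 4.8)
The plan is to identify $\psi$ with the tautological comodule structure carried by the base ring of a Hopf algebroid, transported across a base-change isomorphism. First I would record that the defining relations of $\ste_{\star}$ — generated by $\tau_i^2 = a\xi_{i+1} + (a\tau_0 + \sigma^{-1})\tau_{i+1}$ — have all their coefficients in the subring $H_{\geq} = \F[a,\sigma^{-1}]$. Combined with the monomial basis of Proposition \ref{pro_hypolibrepourhfd}, this yields an isomorphism of $RO(\Z/2)$-graded algebras $\ste_{\star} \cong H\mf_{\star}^{\Z/2} \otimes_{H_{\geq}} \ste_{\geq}$ (extension of scalars along $H_{\geq}\hookrightarrow H\mf_{\star}^{\Z/2}$, using that $\ste_{\geq}$ is $H_{\geq}$-free on the monomial basis), which one checks is compatible with the coproduct, counit, antipode and left unit, since these are given by the same formulae on $\ste_{\geq}$ and $\ste_{\star}$. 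Under this identification, the right unit $\eta_R$, which exhibits $H\mf_{\star}^{\Z/2}$ as a comodule over its own Hopf algebroid $(H\mf_{\star}^{\Z/2}, \ste_{\star})$, becomes a map $\psi \colon H\mf_{\star}^{\Z/2} \to H\mf_{\star}^{\Z/2} \otimes_{H_{\geq}} \ste_{\geq}$, and this is exactly the claimed $(H_{\geq}, \ste_{\geq})$-comodule algebra structure. With this route the comodule-algebra axioms (multiplicativity, counitality and coassociativity) are inherited directly from the cogroupoid structure of $(H\mf_{\star}^{\Z/2}, \ste_{\star})$ and need not be reproved.

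It then remains to read off $\psi$ on the three generators listed in the statement, that is, to compute $\eta_R$ and expand the result in $H\mf_{\star}^{\Z/2} \otimes_{H_{\geq}} \ste_{\geq}$. The class $a$ comes from $\pi_{\star}S^0$ and is therefore fixed by both units, giving $\psi(a) = a\otimes 1$. The identity $\eta_R(\sigma^{-1}) = \sigma^{-1} + a\tau_0$ of Hu--Kriz — precisely the coefficient appearing in the relation above — unwinds, writing $a\tau_0 = \eta_L(a)\tau_0$, to $\psi(\sigma^{-1}) = \sigma^{-1}\otimes 1 + a\otimes\tau_0$. The remaining formula $\psi(\sigma^2) = \sigma^2\otimes 1$ amounts to the assertion $\eta_R(\sigma^2) = \sigma^2$.

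I expect this last identity, and more generally the triviality of the coaction on the negative cone of $H\mf_{\star}^{\Z/2}$, to be the only point requiring genuine computation. The argument I would give is a degree count: $\sigma^2$ sits in degree $2\alpha - 2$, of dimension $0$, whereas every nontrivial monomial in the $\tau_i$ and $\xi_j$ has strictly positive dimension. Hence in any would-be term $m\otimes b$ of $\eta_R(\sigma^2)$ with $b\neq 1$, the coefficient $m$ would be forced into a degree of $H\mf_{\star}^{\Z/2}$ of strictly negative dimension and the wrong twist, which vanishes by the chart of Proposition \ref{pro_cohompoint}. Thus the degree-$(2\alpha-2)$ part of $\ste_{\star}$ is the line $\F\cdot\sigma^2$, and since $\epsilon\,\eta_R = \mathrm{id}$ we conclude $\eta_R(\sigma^2) = \sigma^2$. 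Once this is in hand the lemma follows; the counit and coassociativity relations, were one to verify them by hand rather than inherit them, reduce on generators to $\epsilon(\tau_0) = 0$ and to the primitivity $\Delta(\tau_0) = \tau_0\otimes 1 + 1\otimes\tau_0$, both immediate.
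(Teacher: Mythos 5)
Your argument is correct and follows the same route the paper intends: the paper's own proof is the one-line assertion that the lemma is an immediate consequence of the structure formulae of $(H_{\geq},\ste_{\geq})$, and what you have written is precisely the expansion of that remark --- the coaction is $\eta_R$ read through the base-change identification $\ste_{\star}\cong H\mf_{\star}^{\Z/2}\otimes_{H_{\geq}}\ste_{\geq}$ (which only needs the monomial basis, so there is no circularity with the later deformation proposition), with the Hu--Kriz identity $\eta_R(\sigma^{-1})=\sigma^{-1}+a\tau_0$ supplying the one nontrivial formula. The only substantive point you add beyond what the paper records is the dimension/twist count forcing $\eta_R(\sigma^2)=\sigma^2\otimes 1$, which is a correct and worthwhile verification.
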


\begin{proof}
This is an immediate consequence of the formulae defining the product and coproduct in $(H_{ \geq}, \ste_{ \geq})$.
\end{proof}

\begin{pro} \label{proposition_perturbation_steenrod}
The Hopf algebroid $( H_{ \geq}, \ste_{ \geq})$ satisfies the following properties.
\begin{enumerate}
\item It is a deformation of the non equivariant dual modulo 2 Steenrod algebra, with $ \frak{m} = (a, 1- \sigma^{-1})$ \\
\item The Hopf algebroid $( H \mf_{ \star}^{\Z/2}, \ste_{ \star})$ is isomorphic to $ H \mf_{ \star}^{\Z/2} \otimes_{ H_{ \geq}} ( H_{ \geq}, \ste_{ \geq})$.
\end{enumerate}
\end{pro}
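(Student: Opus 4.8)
The plan is to reduce both statements to a single freeness result: that $\ste_{\geq}$ is free as a module over $H_{\geq} = \F[a,\sigma^{-1}]$. First I would establish this by repeating the argument of Proposition \ref{pro_hypolibrepourhfd} verbatim. The quadratic relation $\tau_i^2 = a\tau_{i+1} + \eta_R(\sigma^{-1})\xi_{i+1}$ lets one rewrite any monomial so that each $\tau_i$ appears with exponent at most $1$, giving surjectivity onto the span of the monomials $\prod_i \tau_i^{\epsilon_i}\xi_j^{n(j)}$ with $\epsilon_i\in\{0,1\}$; the injectivity half is the identical intersection-with-the-relation-ideal computation. This free monomial basis over $H_{\geq}$ is the technical core on which everything else rests.

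For part (1), freeness makes $\ste_{\geq}$ flat over $H_{\geq}$, so $(H_{\geq},\ste_{\geq})$ is a flat family of Hopf algebroids over $\mathrm{Spec}(H_{\geq})$; interpreting \emph{deformation} in this sense, it remains to compute the fiber at the closed point $\mathfrak{m}=(a,1-\sigma^{-1})$, whose residue field is $\F$. Setting $a=0$ and $\sigma^{-1}=1$, and using $\eta_R(\sigma^{-1})=\sigma^{-1}+a\tau_0\equiv 1 \pmod{\mathfrak{m}}$ (Lemma \ref{lemma_comod}), the defining relation degenerates to $\tau_i^2=\xi_{i+1}$. Eliminating the $\xi_{i+1}$ identifies the fiber with the polynomial algebra $\F[\tau_0,\tau_1,\dots]$, where $\tau_i$ has $\dim$-degree $2^{i+1}-1$; one then checks that the coproduct formulas reduce, up to the standard conjugation, to the Milnor coproduct, exhibiting the fiber as the classical dual modulo $2$ Steenrod algebra and hence $(H_{\geq},\ste_{\geq})$ as a deformation of it with the stated $\mathfrak{m}$.

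For part (2), I would first make the right-hand side explicit. By Lemma \ref{lemma_comod} the algebra $H\mf_{\star}^{\Z/2}$ is a $(H_{\geq},\ste_{\geq})$-comodule algebra, so Proposition \ref{pro_groupoidgeoffrey} produces the extended Hopf algebroid $H\mf_{\star}^{\Z/2}\otimes_{H_{\geq}}(H_{\geq},\ste_{\geq})$ with underlying ring $H\mf_{\star}^{\Z/2}$ and object-of-morphisms $H\mf_{\star}^{\Z/2}\otimes_{H_{\geq}}\ste_{\geq}$, its left unit the canonical inclusion and its right unit supplied by the coaction $\psi$. Because $\ste_{\geq}$ is $H_{\geq}$-free, this base change is computed on the monomial basis, yielding the presentation $H\mf_{\star}^{\Z/2}[\tau_i,\xi_{i+1}]/(\tau_i^2=a\tau_{i+1}+\eta_R(\sigma^{-1})\xi_{i+1})$, which is exactly the Hu--Kriz presentation of $\ste_{\star}$ recalled in Section \ref{sec:Steenrod}. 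It then remains to match structure maps: the left unit and counit are immediate, the coproduct and antipode are given by the same formulas, and the right unit agrees because $\psi(\sigma^{-1})=\sigma^{-1}\otimes 1+a\otimes\tau_0$ reproduces $\eta_R(\sigma^{-1})=\sigma^{-1}+a\tau_0$.

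The step I expect to be the main obstacle is precisely this last matching in part (2): one must verify that the Hopf-algebroid structure maps produced abstractly by the groupoid-extension of Proposition \ref{pro_groupoidgeoffrey} coincide on the nose with those of $\ste_{\star}$ as computed by Hu--Kriz \cite{HK01}. The delicate point is the right unit and the way the coaction $\psi$ twists the $H\mf_{\star}^{\Z/2}$-bimodule structure; since $H\mf_{\star}^{\Z/2}$ is itself \emph{not} free over $H_{\geq}$ (it carries the $L$ and $L_-$ summands of Proposition \ref{pro_cohompoint} in negative degrees), one cannot read these maps off a basis of the base ring and must instead check compatibility directly from the explicit coaction together with the counit and associativity axioms of Definition \ref{de_equivalgebroidetc}.
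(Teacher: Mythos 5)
Your proposal is correct and follows essentially the same route as the paper: part (1) is proved by computing the quotient modulo $\frak{m}$ (the relation degenerating to $\tau_i^2=\xi_{i+1}$, identifying the fiber with the Milnor dual Steenrod algebra on generators of dimension $2^{i+1}-1$), and part (2) by combining Lemma \ref{lemma_comod} with the groupoid-extension construction of Proposition \ref{pro_groupoidgeoffrey} and comparing with the Hu--Kriz presentation. The only difference is that you make explicit a preliminary freeness of $\ste_{\geq}$ over $H_{\geq}$ (which the paper leaves implicit, and which is not strictly needed for the base-change of presentations, since tensoring is right exact), and you flag the matching of structure maps that the paper dismisses as ``a consequence of the definitions.''
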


\begin{proof}
 For the first point, observe that by definition of the structure morphisms:
\begin{itemize}
\item on objects,
$$  H_{ \geq} / \frak{m} = \F$$
\item on morphisms, for the generators as $H_{ \geq}$-algebra,
\begin{align*}
\ste_{ \geq} / \frak{m} \rightarrow & \ste_* \\
\xi_i \mapsto & \xi_i^2 \\
\tau_i \mapsto & \xi_i 
 \end{align*}
\end{itemize}
For the second point, since a Hopf algebroid is a cogroupoid object in the category of algebras, by naturality of the construction of Proposition \ref{pro_groupoidgeoffrey} and because a comodule algebra corepresents a functor whose values are left modules over the groupoid corresponding to the Hopf algebroid,  Proposition \ref{pro_groupoidgeoffrey} provides a Hopf algebroid structure on $ H \mf_{ \star}^{\Z/2} \otimes_{ H_{ \geq}} ( H_{ \geq}, \ste_{ \geq}) = ( H \mf_{ \star}^{\Z/2} \otimes_{ H_{ \geq}}H_{ \geq}, H \mf_{ \star}^{\Z/2} \otimes_{ H_{ \geq}} \ste_{ \geq})$ by the $(H_{ \geq}, \ste_{ \geq})$-comodule algebra structure on  $H \mf_{ \star}^{\Z/2}$ of Lemma \ref{lemma_comod}. The assertion is then a consequence of the definitions.
\end{proof}

\subsection{Cofreeness}

In this subsection, all algebras and Hopf algebroids are implicitly finite dimensional $\F$-vector spaces.

The goal of this subsection is to understand when a $RO(\Z/2)$-graded quotient Hopf algebroid of $(H, A)$ satisfies that $(H, A)$ is free as a comodule over it.

\begin{rk}
In the non-equivariant case, $ \ste_*$ is cofree as a comodule over all its quotients Hopf algebras (see \cite[Proposition 4.4]{MM65}). For Hopf algebroids, the situation is more tricky. We will use a modified version of \cite[Theorem A.1.1.17]{Ra86} (\textit{Comodule Algebra Structure Theorem}) to understand the cofreeness of a $RO(\Z/2)$-graded quotient Hopf algebroid over its quotients.
\end{rk}

We now study aspects of deformation theory for Hopf algebroids (we make a slight abuse of terminology by talking about deformation in the context of augmented graded rings, and not of complete local rings).
The motivation comes from Proposition \ref{pro_comparaisonste}, and the previous remark.

\begin{de}
 A deformation of a $RO(\Z/2)$-graded connected Hopf algebroid $(H,A)$ is a $RO(\Z/2)$-graded connected Hopf algebroid $( \tilde{H}, \tilde{A})$ together with an ideal $ \frak m \subset \tilde{H}$ concentrated in degrees of the form $*+n \alpha$ for $n$ non negative, and an identification $( \tilde{H}/ \frak m, \tilde{A}/ \tilde{A} \eta_L (\frak m)) \cong (H,A)$. More precisely:
\begin{enumerate}
 \item $ \tilde{H}/ \frak{m} \cong H$
 \item via the unit $ \eta_L$, we can consider the left ideal generated by $ \frak{m}$ in $ \tilde{A}$. Suppose that $ \tilde{A}/ \tilde{A} \eta_L( \frak{m}) \cong A$
 \item these two isomorphisms induce an isomorphism of Hopf algebroid.
\end{enumerate}
\end{de}

\begin{thm} \label{resultat_coliberte}
Let $( \tilde{H}, \tilde{A})$ be a $RO(\Z/2)$-graded connected Hopf algebroid and $ \tilde{ \mathcal{I}_B} \subset \tilde{ \mathcal{I}_C}$ two $RO(\Z/2)$-graded Hopf algebroid ideals of $( \tilde{H}, \tilde{A})$. We use the following notations:
\begin{align*}
  \tilde{B} = & \tilde{A} / \tilde{ \mathcal{ I}_B} \\
  \tilde{ C} = & \tilde{A} / \tilde{ \mathcal{ I}_C} \\
  \mathcal{ I}_B = & \tilde{ \mathcal{I}_B} / \tilde{ \mathcal{I}_B} \cap \tilde{A} \eta_L( \frak{m}) \\
  \mathcal{ I}_C = & \tilde{ \mathcal{I}_C} /  \tilde{ \mathcal{I}_C} \cap \tilde{A} \eta_L( \frak{m}) \\
  B = & A / \mathcal{ I}_B \\
  C = & A / \mathcal{ I}_C.
\end{align*}
Suppose moreover that
\begin{enumerate}
 \item The ideals $\mathcal{ I}_B$ and $\mathcal{ I}_C$ are Hopf algebroid ideals of $A$. Thus $(H,B)$ and $(H,C)$ are Hopf algebroids, and there are Hopf algebroid morphisms $A \rightarrow B$ and $A \rightarrow C$.
 \item The map $B = A / \mathcal{ I}_B \rightarrow A / \mathcal{ I}_C = C$ makes $B$ a cofree $C$-comodule.
\item $ \tilde{B}$ and $ \tilde{C}$ are free $H$-modules.
\end{enumerate}
Then, $ \tilde{B}$ is a cofree $ \tilde{C}$-comodule.
\end{thm}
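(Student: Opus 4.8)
The plan is to recast cofreeness as the invertibility of a single canonical comparison map and then lift that invertibility from the special fibre $(H,B)\to(H,C)$ to the total family $(\tilde H,\tilde B)\to(\tilde H,\tilde C)$ by an $\mathfrak m$-adic Nakayama argument. Following the modified Comodule Algebra Structure Theorem, $\tilde B$ is a cofree $\tilde C$-comodule precisely when the canonical $\tilde C$-comodule map
\[
\alpha : \tilde C \otimes_{\tilde H} \tilde D \longrightarrow \tilde B, \qquad \tilde D := \tilde B \,\square_{\tilde C}\, \tilde H,
\]
induced by the coaction and the counit, is an isomorphism; here $\tilde D$ is the module of $\tilde C$-primitives of $\tilde B$, and an isomorphism $\alpha$ is exactly an extended-comodule presentation $\tilde B\cong\tilde C\otimes_{\tilde H}\tilde D$. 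So the whole problem reduces to proving that $\alpha$ is an isomorphism, given that its analogue $\alpha_0 : C\otimes_H D\to B$, with $D=B\,\square_C\,H$, is an isomorphism — this last being the translation of hypothesis (2) via the same theorem applied to the special fibre.

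First I would set up the reduction-mod-$\mathfrak m$ comparison. Since $\tilde H/\mathfrak m\cong H$ and $\tilde B=\tilde A/\tilde{\mathcal I_B}$ reduces to $B$, applying $-\otimes_{\tilde H}H$ to $\alpha$ yields a map whose source is $C\otimes_H(\tilde D\otimes_{\tilde H}H)$ and whose target is $B$. The point to check is that forming $\tilde C$-primitives commutes with this reduction, that is, the natural map $\tilde D\otimes_{\tilde H}H\to D$ is an isomorphism, so that the reduced map $\bar\alpha$ is identified with $\alpha_0$. Because $(\tilde H,\tilde C)$ is connected, the augmentation coideal $\overline{\tilde C}=\mathrm{coker}(\eta_L:\tilde H\to\tilde C)$ splits off as a free $\tilde H$-summand (hypothesis (3)), so $\tilde D$ is the kernel of a map $\tilde B\to\tilde B\otimes_{\tilde H}\overline{\tilde C}$ of free $\tilde H$-modules; exhibiting its image as a split $\tilde H$-submodule makes the cotensor exact under base change and identifies $\tilde D\otimes_{\tilde H}H$ with $D$. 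With this identification, $\bar\alpha=\alpha_0$ is an isomorphism by hypothesis (2).

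Finally I would run a graded Nakayama argument to promote the reduced isomorphism to $\alpha$ itself. The ideal $\mathfrak m$ is concentrated in degrees $*+n\alpha$ with $n$ positive, so multiplication by $\mathfrak m$ strictly raises the twist; as every $RO(\Z/2)$-degree is a finite-dimensional $\F$-vector space, the $\mathfrak m$-adic filtration is finite in each fixed degree, hence complete and Hausdorff. Surjectivity of $\bar\alpha=\alpha_0$ therefore forces surjectivity of $\alpha$ degree by degree. Since $\tilde B$ is free, hence projective, by hypothesis (3), this surjection splits, so $\ker\alpha$ is a direct summand and the split sequence stays exact under $-\otimes_{\tilde H}H$; injectivity of $\bar\alpha$ then gives $\ker\alpha\otimes_{\tilde H}H=0$. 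As $\ker\alpha$ is finitely generated in each degree, graded Nakayama forces $\ker\alpha=0$, so $\alpha$ is an isomorphism and $\tilde B$ is a cofree $\tilde C$-comodule.

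The step I expect to be the genuine obstacle is the middle one: showing that passing to $\tilde C$-primitives (the cotensor product with $\tilde H$) commutes with reduction modulo $\mathfrak m$. Since $H$ is not flat over $\tilde H$, kernels need not be preserved by $-\otimes_{\tilde H}H$, and the argument hinges on using connectedness together with the freeness hypothesis (3) to realise the relevant coaction-difference map as a split map of free $\tilde H$-modules — equivalently, on the vanishing of the first derived $\mathrm{Cotor}^1$ obstruction. Once this compatibility is secured, the reduction of $\alpha$ is literally $\alpha_0$, and the completeness and Nakayama steps are routine.
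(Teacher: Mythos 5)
Your overall strategy --- establish the isomorphism on the special fibre and then lift it across $\mathfrak{m}$ by completeness and a graded Nakayama argument --- is the same as the paper's, but the proof as written has a gap at its foundation. There is no canonical $\tilde{C}$-comodule map $\alpha:\tilde{C}\otimes_{\tilde{H}}\tilde{D}\to\tilde{B}$ ``induced by the coaction and the counit'': those structure maps only point \emph{out of} $\tilde{B}$ (namely $\tilde{B}\to\tilde{C}\otimes_{\tilde{H}}\tilde{B}$ and the projection $\tilde{B}\to\tilde{C}$), and the inclusion of the primitives $\tilde{D}=\tilde{B}\,\square_{\tilde{C}}\,\tilde{H}\hookrightarrow\tilde{B}$ cannot be extended over $\tilde{C}\otimes_{\tilde{H}}\tilde{D}$ without first choosing either a section of $\tilde{B}\twoheadrightarrow\tilde{C}$ or a retraction of $\tilde{B}$ onto a complement of the primitives; this is exactly the situation of the (co)module structure theorems, where the isomorphism exhibiting (co)freeness is never canonical. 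Since your entire argument is organized around the invertibility of this single map, it does not get off the ground as stated. The second gap is the one you flag yourself: the identification $\tilde{D}\otimes_{\tilde{H}}H\cong D$. Your proposed repair --- that the coaction-difference map $\tilde{B}\to\tilde{B}\otimes_{\tilde{H}}\overline{\tilde{C}}$ has split image, i.e.\ that the relevant $\mathrm{Cotor}^1$ obstruction vanishes --- is essentially equivalent to the cofreeness being proved, so at this point the argument is circular rather than merely incomplete.

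The paper resolves both difficulties at once by making the choices explicit: it fixes a counital $C$-comodule basis $\{1,b_i\}$ of $B$ (hypothesis (2)) and an $\tilde{H}$-basis $\{1,y_j\}$ of $\tilde{C}$ (hypothesis (3)), lifts these to $\tilde{B}$, and defines the comparison map as $\tilde{\phi}=(\mathrm{id}\otimes\tilde{\psi})\circ(\pi\otimes\mathrm{id})\circ\Delta$, where $\tilde{\psi}$ is the $\tilde{H}$-linear retraction onto $\tilde{H}\{1,b_i\}$ killing the $y_j$. Because the target $\tilde{C}\otimes_{\tilde{H}}\tilde{H}\{1,b_i\}$ is built from the lifted basis rather than from $\tilde{D}$, the question of whether primitives commute with reduction modulo $\mathfrak{m}$ never arises. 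The lifting step is then carried out by filtering by the degree of the $b_i$-component and observing that, since $\mathfrak{m}$ sits in non-negative twists, $\tilde{\phi}$ agrees with the base change of the isomorphism $\phi$ on the associated graded; exhaustiveness and degreewise finite-dimensionality (Mittag--Leffler) complete the argument. Your Nakayama endgame would indeed work once a comparison map and the base-change identification are in hand, but the honest repair is to replace the ``canonical'' $\alpha$ by a map built from chosen lifted bases --- at which point you have reconstructed the paper's proof.
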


\begin{proof}
This proof is an appropriately modified version of \cite[Theorem A1.1.17]{Ra86}. \\
The $C$-comodule $B$ is cofree, so let $ \{1, b_i \}$ be a basis of $B$ as a counital $C$-comodule and denote $ \psi : B \rightarrow C \otimes_H H \{ 1,b_i \}$ the $C$-comodule isomorphism such that $B \surj C$ is the $C$-comodule map such that $ b_i \mapsto 0$. \\

By hypothesis, $ \tilde{C}$ is a free $H$-module. Choose a basis $ \{1,y_i \}$ of $ \tilde{C}$ as a $ \tilde{H}$-module, where $1$ comes from $ \eta_L$. The set $ \{x_i \} = \{1,y_j \} \times  \{1,b_k \}$ is then a basis of $ \tilde{B}$ as a $ \tilde{H}$-module.
 
We also have the following identification
\begin{eqnarray*}
 B &  \cong & \tilde{B}/ \tilde{B} ( \frak{m}) \\
& \cong & \tilde{H} \{x_i \} / ( ( \frak{m})\otimes_{\F} \F \{x_i \}) \\
& \cong & \tilde{H}  / ( ( \frak{m})) \otimes_{\F} \F \{x_i \} \\
&  \cong & H   \{x_i \}.
\end{eqnarray*}
thus $ \{x_i \}$ is also a $H$-module basis for $B$.
Finally, the $ \tilde{H}$-module morphism
$$ \tilde{ \psi} : \tilde{B} \rightarrow \tilde{H} \{ 1,b_i \} $$
induced by $y_i \mapsto 0$ extends $ ( \epsilon \otimes 1) \circ \psi : B \rightarrow H \{ 1,b_i \}$. \\

Let  $ \tilde{ \phi}$ be the graded $ \tilde{ H}$-module morphism defined as the composite
$$ \tilde{B} \stackrel{ \Delta}{ \rightarrow}  \tilde{B} \otimes_{ \tilde{H}} \tilde{B} \surj \tilde{C} \otimes_{ \tilde{H}} \tilde{B} \stackrel{ id \otimes \tilde{ \psi}}{ \longrightarrow} \tilde{C} \otimes_{ \tilde{H}} \tilde{H} \{ 1,b_i \} . $$

And define $ \phi$ as the $C$-comodule morphism
$$ \phi :  B \stackrel{ \Delta}{ \rightarrow} B \otimes_{ H} B \surj C \otimes_{ H} B \xrightarrow{ id \otimes (( \epsilon \otimes id) \circ \psi)} C \otimes_{ H} H \{ b_i \}. $$ The map $ \phi$ is an isomorphism. Moreover, $ \psi$ is a $C$-comodule isomorphism, and projection is compatible with the coproduct on $$ B \rightarrow B \otimes_{H} B$$ and $$ C \rightarrow C \otimes_H C,$$ so $ \phi$ is also a $C$-comodule isomorphism. \\

We now show that $ \tilde{ \phi}$ is an isomorphism. The strategy goes as follows:  we define two decreasing filtrations on $ \tilde{B}$ and $ \tilde{C} \otimes_{ \tilde{H}} \tilde{H} \{ b_i \}$ which are compatible with the map $ \tilde{ \phi}$. We then show that $  \tilde{ \phi}$ induces an isomorphism on the graded object associated with this filtration.

The filtrations are defined on $\tilde{C} \otimes_{ \tilde{H}} \tilde{H} \{ b_i \}$ and $ \tilde{B}$ by:
\begin{align}
 F^d( \tilde{C} \otimes_{ \tilde{H}} \tilde{H} \{ b_i \})= & \langle c \otimes hb_i | deg(hb_i) \geq d \rangle \\
 F^d(\tilde{B})= & \widetilde{ \phi}^{-1}(F^d(\tilde{C} \otimes_{ \tilde{H}} \tilde{H} \{ b_i \}))
\end{align}
where $ \langle S \rangle$, for a set $S$ stands for the cofree $ \tilde{C}$-comodule on $S$.
We check that $ \tilde{ \phi}$ is an isomorphism on the graded object associated with this filtration. \\

Write $ \Delta(b) = \sum b' \otimes b''$ for $b$ in $ \tilde{B}$. By definition of $ \tilde{ \phi}$, we have
 $$ \tilde{ \phi}(b) = \sum [b'] \otimes \tilde{ \psi}(b'').$$

So, let $ \tilde{b}$ be a homogeneous element of $ \tilde{B}$, and write $ \tilde{B}$ in the basis $ \{x_i \}$
$$ \tilde{b} = \sum h_ix_i.$$ 

Consider first an element of the form $h_i x_i$. By definition of Hopf algebroid deformation, $ \frak{m}$ is concentrated in degrees of non negative \textit{twists}, thus, the equality $ \Delta_{ \tilde{B}} = \Delta_B \text{ modulo } \frak{m}$ gives
$$ \Delta_{ \tilde{B}} (h_ib_i) \equiv h_i \Delta_{ \tilde{B}}(b_i) \equiv h_i \Delta_B(b_i) \text{ (mod terms of lower twist)};$$ 
thus, on the graded object associated to the filtration $F^{ \bullet}$, the morphism $ \tilde{ \phi}$ coincides with the unique $ \tilde{H}$-module morphism which extends $ \phi$.

Define now a a filtration  $F^{ \bullet}$ on $C \otimes_{ H} H \{ b_i \}$ by $$F^d( C \otimes_{ H} H \{ b_i \})=  < c \otimes hb_i | deg(hb_i) \geq d >$$ and one on $B$ by pulling back along $ \phi$.

The map $ \phi$ being an isomorphism, it induces an isomorphism on the associated graded $C \otimes_{ H} H \{ b_i \}$ by the filtration $F^d$. But $ \tilde{ \phi}$ coincides with $ \phi$ on the graded object associated to $F^{ \bullet}$. We conclude that $ \tilde{ \phi}$ induces an isomorphism on the graded object associated with the  filtration $F^{ \bullet}$.
Finally, the filtration is exhaustive, and the finite dimensional hypothesis in each degree allows us to apply Mittag-Leffler criterion to conclude that the filtration is also complete.
Consequently, $ \tilde{ \phi}$ is a $ \tilde{C}$-comodule isomorphism.
\end{proof}

\begin{thm} \label{thm_cofree}
Let $B$ and $C$ be two quotient $RO(\Z/2)$-graded Hopf algebroids of $( H \mf_{ \star}^{\Z/2}, \ste_{ \star})$ defined by two profile functions $(h_B,k_B) \geq (h_C,k_C)$. Then there is a $RO(\Z/2)$-graded Hopf algebroid map $B \surj C$, and the induced $C$-comodule structure on $B$ is cofree.
\end{thm}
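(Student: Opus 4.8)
The plan is to exhibit the surjection first, which is elementary, and then to deduce cofreeness from the deformation-theoretic Theorem~\ref{resultat_coliberte}.

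First I would produce the map. Since $(h_B,k_B)\geq(h_C,k_C)$, for every index we have $2^{h_B(i)}\geq 2^{h_C(i)}$ and $2^{k_B(i)}\geq 2^{k_C(i)}$, so each generator $\xi_i^{2^{h_B(i)}}$, $\tau_i^{2^{k_B(i)}}$ of $I(h_B,k_B)$ is a multiple of the corresponding generator of $I(h_C,k_C)$; hence $I(h_B,k_B)\subseteq I(h_C,k_C)$. The identity of $\ste_{\star}$ therefore descends to a surjection $B\surj C$, and since both ideals are $RO(\Z/2)$-graded Hopf algebroid ideals (this is exactly the hypothesis that $B$ and $C$ are quotient Hopf algebroids, via Proposition~\ref{proihk}) and the map is induced by $\mathrm{id}_{\ste_{\star}}$, it commutes with all the structure maps and is a morphism of $RO(\Z/2)$-graded Hopf algebroids. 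In particular $B$ acquires a $C$-comodule structure, and it remains to show this is cofree.

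For cofreeness I would apply Theorem~\ref{resultat_coliberte} to the deformation supplied by Proposition~\ref{proposition_perturbation_steenrod}, taking $(\tilde H,\tilde A)=(H_{\geq},\ste_{\geq})$ with ideal $\frak m=(a,1-\sigma^{-1})$ and special fibre the non-equivariant dual Steenrod algebra $(\F,\ste_*)$. I would lift the profile data to the ideals $\tilde{\mathcal I}_B\subseteq\tilde{\mathcal I}_C$ of $\ste_{\geq}$ generated by the same monomials, set $\tilde B=\ste_{\geq}/\tilde{\mathcal I}_B$ and $\tilde C=\ste_{\geq}/\tilde{\mathcal I}_C$, and note that base change along the inclusion $H_{\geq}\to H\mf_{\star}^{\Z/2}$ identifies $H\mf_{\star}^{\Z/2}\otimes_{H_{\geq}}\tilde B\cong B$ and $H\mf_{\star}^{\Z/2}\otimes_{H_{\geq}}\tilde C\cong C$. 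Of the three hypotheses of Theorem~\ref{resultat_coliberte}, the first two are handled by classical non-equivariant input: under the Hopf algebra reduction $\ste_{\geq}/\frak m\to\ste_*$ the monomial generators of $I(h,k)$ become the monomial generators of a Hopf algebra ideal of $\ste_*$, the equivariant conditions \eqref{eqn_ideal1}--\eqref{eqn_ideal2} of Proposition~\ref{proihk} reducing precisely to the Adams--Margolis conditions of \cite[Theorem~3.3]{AM74} (hypothesis~1), while cofreeness of the reduced quotient $B'$ over $C'$ in $\ste_*$ is the statement that $\ste_*$ is cofree over each of its quotient Hopf algebras, \cite[Proposition~4.4]{MM65} (hypothesis~2). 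Granting all three hypotheses, Theorem~\ref{resultat_coliberte} yields that $\tilde B$ is a cofree $\tilde C$-comodule, and base-changing this comodule isomorphism along $H_{\geq}\to H\mf_{\star}^{\Z/2}$ gives that $B$ is a cofree $C$-comodule.

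The main obstacle is the third hypothesis of Theorem~\ref{resultat_coliberte}: that $\tilde B$ and $\tilde C$ be free over the deformation base $H_{\geq}$. This cannot be taken for granted, as Remark~\ref{remarque_quotient_non_libre} shows that a quotient by a Hopf ideal need not be free. I would establish it by the argument of Proposition~\ref{pro_quotientlibre}, carried out over $H_{\geq}=\F[a,\sigma^{-1}]$ rather than over $H\mf_{\star}^{\Z/2}$, which produces a basis of surviving monomials exactly when the defining pair of profile functions is \emph{free} in the sense of Definition~\ref{de_libre}. This freeness is precisely what makes the positive-twist reduction work, and it holds for the quotients $\ste(n)_{\star}$, $\mathcal{E}(n)_{\star}$ and $\mathcal{E}(\infty)_{\star}$ to which Corollary~\ref{cor_free} applies. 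I expect the remaining delicate point, beyond this freeness, to be verifying that base change along $H_{\geq}\to H\mf_{\star}^{\Z/2}$ genuinely preserves the cofree comodule structure, which should follow from the base-change isomorphism of Proposition~\ref{proposition_perturbation_steenrod} applied to the quotients.
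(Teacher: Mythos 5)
Your proposal follows the same route as the paper: the surjection comes from the inclusion of ideals $I(h_B,k_B)\subseteq I(h_C,k_C)$, and cofreeness is deduced from Theorem~\ref{resultat_coliberte} applied to the deformation of Proposition~\ref{proposition_perturbation_steenrod}, with the classical cofreeness of $\ste_*$ over its quotient Hopf algebras (\cite[proposition 4.4]{MM65}) supplying the non-equivariant input. Your extra care about the third hypothesis of Theorem~\ref{resultat_coliberte} --- that $\tilde{B}$ and $\tilde{C}$ must be free over $H_{\geq}$, which by Remark~\ref{remarque_quotient_non_libre} is not automatic and is verified via the argument of Proposition~\ref{pro_quotientlibre} only when the profile functions form a free pair in the sense of Definition~\ref{de_libre} --- is warranted: the paper's one-line proof passes over this point in silence even though the statement of the theorem does not restrict to free pairs, so your version is, if anything, the more honest account of what is actually proved.
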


\begin{proof}
We use that the non-equivariant modulo 2 dual Steenrod algebra is cofree over all its Hopf algebra quotients, thus the hypothesis of Proposition \ref{resultat_coliberte} are satisfied for the deformation given in Proposition \ref{proposition_perturbation_steenrod}. The result is then a consequence of Theorem \ref{resultat_coliberte}.
\end{proof}

\begin{corr} \label{cor_free}
For all $n \geq 0$, the Hopf algebroid $( H \mf_{ \star}^{\Z/2}, \ste_{ \star})$ is cofree as a comodule over its quotients
$ \mathcal{E}(n)_{ \star}$ and $ \ste(n)_{ \star}$, and these quotient Hopf algebroids are cofree over one another.
\end{corr}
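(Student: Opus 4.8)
The plan is to realise each of the three Hopf algebroids in the statement as a quotient of $(H\mf_{\star}^{\Z/2},\ste_{\star})$ cut out by an explicit profile function, to order these profile functions in the partial order introduced above, and then to apply Theorem \ref{thm_cofree} once for each comparison. Writing $(\infty,\infty)$ for the pair with $h(i)=k(i)=\infty$ for all $i$, the convention $x^{\infty}=0$ gives $I(\infty,\infty)=0$, so this pair recovers $\ste_{\star}$ itself; by definition $\mathcal{E}(n)_{\star}=\ste_{\star}/I(0,k_n)$ and $\ste(n)_{\star}=\ste_{\star}/I(h_n,k_n)$, where $h_n(i)=\max(n-i,0)$ and $k_n(i)=\max(n-i,0)$.

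First I would confirm that $\mathcal{E}(n)_{\star}$ and $\ste(n)_{\star}$ are genuine quotient Hopf algebroids. Each of $(0,k_n)$ and $(h_n,k_n)$ is a minimal free pair: minimality follows from Lemma \ref{lemma_minequiv} via the description of which $\tau_i^{2^m}$ lie in $I(h,k)$ furnished by Lemma \ref{lemma_condlibre}, and freeness (Definition \ref{de_libre}) is the direct numerical check that $k(i+m)=0$ and $h(i+j)\leq m-j$ whenever $m\geq k(i)$ and $j\leq m$. Being minimal, they are amenable to Proposition \ref{proihk}, whose conditions \eqref{eqn_ideal1} and \eqref{eqn_ideal2} both reduce here to the single estimate $\max(n-i,0)\leq j+\max(n-i-j,0)$, valid for all $i,j\geq 0$ (with equality once $i+j\leq n$); hence both ideals are Hopf algebroid ideals. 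Being free, Proposition \ref{pro_quotientlibre} makes both quotients free $H\mf_{\star}^{\Z/2}$-modules, which is precisely the freeness input Theorem \ref{thm_cofree} needs through its reliance on Theorem \ref{resultat_coliberte}.

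Next I would record the three comparisons: $(\infty,\infty)\geq(0,k_n)$ and $(\infty,\infty)\geq(h_n,k_n)$ are immediate, and $(h_n,k_n)\geq(0,k_n)$ holds because $h_n(i)\geq 0$ with identical $k$-components. Feeding the first two into Theorem \ref{thm_cofree} produces the surjections $\ste_{\star}\surj\mathcal{E}(n)_{\star}$ and $\ste_{\star}\surj\ste(n)_{\star}$ together with the cofreeness of $(H\mf_{\star}^{\Z/2},\ste_{\star})$ as a comodule over $\mathcal{E}(n)_{\star}$ and over $\ste(n)_{\star}$. Feeding in the third gives the factoring surjection $\ste(n)_{\star}\surj\mathcal{E}(n)_{\star}$ and the cofreeness of $\ste(n)_{\star}$ as an $\mathcal{E}(n)_{\star}$-comodule; since a comodule structure is forced by the direction of the quotient map, this is the content of the final ``cofree over each other'' clause.

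I do not anticipate a genuine obstacle, as all the analytic work has been absorbed into Theorem \ref{thm_cofree}; what remains is bookkeeping. The one step deserving care is the Hopf-ideal verification for $\ste(n)_{\star}$, where the precise shape of $h_n$ enters, but this is exactly the equivariant shadow of the classical Adams--Margolis computation for $A(n)$, already recorded for the $h$-part in Proposition \ref{pro_lienclassiqueequiv}, and it collapses to the elementary inequality above.
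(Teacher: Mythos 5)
Your proposal is correct and follows the paper's intended route: the corollary is a direct application of Theorem \ref{thm_cofree} to the comparisons $(\infty,\infty)\geq(h_n,k_n)\geq(0,k_n)$ and $(\infty,\infty)\geq(0,k_n)$, with the auxiliary checks (that the pairs are free, hence the quotients are free $H\mf_{\star}^{\Z/2}$-modules, and that the ideals are Hopf algebroid ideals via Proposition \ref{proihk}) being exactly what the paper records in the remark following the definitions of $\mathcal{E}(n)_{\star}$ and $\ste(n)_{\star}$. Your explicit verification of minimality and of the inequality $\max(n-i,0)\leq j+\max(n-i-j,0)$ is sound and merely makes explicit what the paper leaves implicit.
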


\begin{thm} \label{main_thm}
For all $n \geq 0$, the $\Z/2$-equivariant Steenrod algebra $H \mf^{ \star}_{\Z/2}H\F$ is free as a module over
$ \mathcal{E}(n)^{ \star}$ and $ \ste(n)^{ \star}$, and these algebras are free over one another.
\end{thm}

\begin{proof}
Because of Corollary \ref{corr_hypolibertehfd}, Proposition \ref{pro_dualite_operationcoop} is satisfied for $E=H\mf$. Thus, Boardman's duality theorem (cf Proposition \ref{pro_thmboardman}) provides an equivalence between modules over the $\Z/2$-equivariant Steenrod algebra and comodules over $(H\mf_{\star}^{\Z/2}, \ste_{\star})$.
This is now a consequence of Corollary \ref{cor_free}.
\end{proof}

As a consequence of this result, we have an easy computation in K-theory with reality. We first recall a possible construction of this $\Z/2$-equivariant cohomology theory, following Atiyah \cite{At66}

\begin{de}
Let $X$ be a $\Z/2$-space. A Real vector bundle over $X$ is a complex vector bundle over $X$ such that the action of $\Z/2$ is anti-linear on the fibers.
\end{de}

The following proposition is due to Atiyah \cite{At66}.

\begin{pro}
The functor $K\mathbb{R}_0$, defined on objects by $K \mathbb{R}^0(X) = Gr \left(Vect_\mathbb{R}(X) \right)$ extends to a cohomology theory. Denote by $K \mathbb{R}$ the corresponding $\Z/2$-spectrum.
\end{pro}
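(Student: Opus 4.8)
The plan is to carry out Atiyah's construction from \cite{At66}, phrased in the language of genuine $\Z/2$-equivariant stable homotopy theory. I work on the category of pointed Real spaces, that is pointed $\Z/2$-spaces, and set $\widetilde{K\mathbb{R}}^0(X) = \ker\bigl(K\mathbb{R}^0(X) \to K\mathbb{R}^0(\ast)\bigr)$. The first step is to check that this degree-zero functor satisfies the reduced Eilenberg--Steenrod axioms in the equivariant sense. Homotopy invariance holds because a $\Z/2$-homotopy of maps $X \to Y$ induces, by pullback, an isomorphism of Real bundles, so $\widetilde{K\mathbb{R}}^0$ descends to the $\Z/2$-homotopy category. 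Half-exactness, i.e. exactness of $\widetilde{K\mathbb{R}}^0(Y/X) \to \widetilde{K\mathbb{R}}^0(Y) \to \widetilde{K\mathbb{R}}^0(X)$ for a Real cofibration $X \hookrightarrow Y$, follows from the Real clutching (difference bundle) construction, which is entirely parallel to the complex case once one keeps track of the anti-linear involution. The wedge axiom is immediate, since a Real bundle on a wedge of Real spaces is the same as a family of Real bundles on the summands.

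Next I extend to an $RO(\Z/2)$-graded theory. For $V = a + b\alpha$ with $a,b \geq 0$ I define $\widetilde{K\mathbb{R}}^{-V}(X) = \widetilde{K\mathbb{R}}^0(S^V \wedge X)$; these tautologically satisfy the suspension isomorphisms for $S^1$ and $S^\alpha$. To reach the remaining degrees I invoke Atiyah's Real Bott periodicity theorem, which provides a natural isomorphism $\widetilde{K\mathbb{R}}^0(X) \cong \widetilde{K\mathbb{R}}^0(S^{1+\alpha} \wedge X)$ given by multiplication with the Bott class in $\widetilde{K\mathbb{R}}^0(S^{1+\alpha})$, where $S^{1+\alpha}$ is the one point compactification of $\mathbb{C}$ with complex conjugation (equivalently $\mathbb{P}^1(\mathbb{C})$ with the conjugation action). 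Applying this $(1+\alpha)$-periodicity isomorphism repeatedly to the groups already defined reaches every degree $W \in RO(\Z/2)$: any $W$ may be written as $-V + n(1+\alpha)$ with $V = a + b\alpha$, $a,b \geq 0$ and $n \geq 0$. One checks that the groups so obtained are independent of the chosen presentation and that the suspension isomorphisms are compatible, yielding a genuine $RO(\Z/2)$-graded $\Z/2$-equivariant cohomology theory $\widetilde{K\mathbb{R}}^{\star}$.

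Finally, to produce the representing $\Z/2$-spectrum $K\mathbb{R}$, I apply the equivariant Brown representability theorem (see \cite{LMS}) to the $RO(\Z/2)$-graded theory just constructed: since $\Z/2\sh$ is compactly generated and $\widetilde{K\mathbb{R}}^{\star}$ is a homotopy-invariant, exact cohomology theory satisfying the wedge axiom, it is represented by a genuine $\Z/2$-spectrum. Concretely, the representing space in degree zero is $\Z \times BU$ equipped with the involution induced by complex conjugation (equivalently, the space of Real Fredholm operators on a complete Real $\Z/2$-universe Hilbert space), and the required deloopings in the direction of each representation sphere are supplied by the Bott periodicity isomorphisms above, assembling $\Z \times BU$ into an $\Omega$-$\Z/2$-spectrum on the complete universe.

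The main obstacle is Atiyah's Real Bott periodicity theorem. Verifying the Eilenberg--Steenrod axioms in degree zero is routine, being a straightforward equivariant adaptation of the nonequivariant $K$-theory argument; the genuinely hard input, and the step that upgrades a degree-zero functor into a periodic, $RO(\Z/2)$-gradable, and hence spectrum-representable cohomology theory, is the construction of the Bott class together with the proof that multiplication by it is an isomorphism in the presence of the anti-linear involution. This is precisely the content of \cite{At66}, on which the whole argument rests.
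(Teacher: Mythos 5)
Your proposal is correct and takes essentially the same route as the paper: the paper's ``proof'' consists solely of attributing the result to Atiyah \cite{At66}, and your argument is exactly the content of that citation --- verification of the degree-zero axioms, Real Bott periodicity with respect to $S^{1+\alpha}$ to extend to the full $RO(\Z/2)$-grading, and equivariant Brown representability (as in \cite{LMS}) to assemble the genuine $\Z/2$-spectrum. The only point glossed over is that the bundle-theoretic definition of $K\mathbb{R}^0$ behaves as a cohomology theory only on compact ($\Z/2$-)spaces, so one should verify the axioms on finite $\Z/2$-CW complexes and let representability handle the extension to arbitrary objects; this is standard and does not affect correctness.
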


\begin{corr}[Corollary of Theorem \ref{main_thm}]
One has an isomorphism of $\F[a]$-modules
$$ H\mf^{\star}_{\Z/2} k\R \cong \ste^{\star} / \mathcal{E}^{\star}(2).$$
\end{corr}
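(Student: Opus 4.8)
The plan is to compute the $H\mf$-cohomology of the $\Z/2$-spectrum $k\R$ by identifying it as a quotient of the Steenrod algebra, exactly mirroring the classical computation that $H\F^* ko \cong \ste^*/\!/\mathcal{A}(1)$ and $H\F^* ku \cong \ste^*/\!/\mathcal{E}(1)$. First I would recall or establish a $\Z/2$-equivariant Adams-type or cell-structure description of $k\R$, the connective cover of $K\R$. The key input is that the Real Bott map makes $k\R$ behave, on the level of $H\mf$-cohomology, like the equivariant analogue of connective $K$-theory. Atiyah's $K\R$ is built so that its underlying non-equivariant spectrum is $KU$ and its fixed points recover $KO$-type information; correspondingly, one expects $H\mf_{\star}^{\Z/2} k\R$ to be a cyclic module over $\ste_{\star}$ whose annihilator is precisely the sub-Hopf-algebroid $\mathcal{E}(2)_{\star}$ (or rather the ideal cutting it out).

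The main computational step is to show the dual statement at the level of comodules and then apply Boardman duality. Concretely, I would show that $H\mf_{\star}^{\Z/2} k\R$ is isomorphic as an $\ste_{\star}$-comodule to the quotient comodule $\ste_{\star} \,\square\, \mathcal{E}(2)_{\star}$-style cofree object over $\mathcal{E}(2)_{\star}$; equivalently, that $H\mf^{\star}_{\Z/2} k\R$ is the cyclic $\ste^{\star}$-module $\ste^{\star}/\ste^{\star}\!\cdot\!(\text{augmentation ideal of }\mathcal{E}^{\star}(2))$, which is exactly $\ste^{\star}/\mathcal{E}^{\star}(2)$ in the notation of the excerpt. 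The freeness and cofreeness established in Theorem \ref{main_thm} and Corollary \ref{cor_free} are what license this: since $\ste^{\star}$ is free as a module over $\mathcal{E}(2)^{\star}$, the quotient $\ste^{\star}/\mathcal{E}^{\star}(2)$ is a well-behaved $\F[a]$-module, and I would read off its $\F[a]$-module structure from the free basis $\tau_0^{\epsilon_0}\cdots\tau_2^{\epsilon_2}$ of $\mathcal{E}(2)_{\star}$ described in the remark following the definition of $\mathcal{E}(n)_{\star}$.

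The heart of the argument is therefore identifying the specific quotient. I would argue that the cohomology operations acting trivially on the fundamental class of $k\R$ are exactly those in the image of $\mathcal{E}^{\star}(2)$, by checking the action of the Milnor primitives $Q_i$ (the duals of the $\tau_i$) on the bottom class. In the non-equivariant world, $H\F^* ku$ is killed by $Q_0$ and $Q_1$ and this is what produces $\mathcal{E}(1)$; the shift to $\mathcal{E}(2)$ here reflects the doubling inherent in the Real/equivariant grading, where the Euler class $a$ and the invertible class $\sigma^{-1}$ reorganize the relations $\tau_i^2 = a\xi_{i+1} + (a\tau_0+\sigma^{-1})\tau_{i+1}$. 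The $\F[a]$-module structure in the statement then comes from remembering only the $a$-action while forgetting the full $\ste^{\star}$-action.

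The hard part will be pinning down the equivariant $H\mf$-cohomology of $k\R$ directly, since unlike $H\mf$ itself this is not supplied by Hu--Kriz; I would need either a known cell or Postnikov description of $k\R$ compatible with the $RO(\Z/2)$-grading, or a Real Adams spectral sequence input, to justify that the comodule is cyclic with the asserted annihilator. Once that identification is in hand, the passage from the comodule statement to the claimed $\F[a]$-module isomorphism $H\mf^{\star}_{\Z/2}k\R \cong \ste^{\star}/\mathcal{E}^{\star}(2)$ is purely formal via proposition \ref{pro_dualite_operationcoop} and the cofreeness results, so the genuine obstacle is the geometric/homotopy-theoretic identification of $k\R$ rather than any algebra.
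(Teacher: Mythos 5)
The step you yourself flag as ``the hard part'' --- identifying $H\mf^{\star}_{\Z/2}k\R$ as a cyclic $\ste^{\star}$-module whose annihilator is generated by the augmentation ideal of $\mathcal{E}^{\star}(2)$ --- is not a detail that can be deferred: it is the entire content of the corollary, since the duality and (co)freeness machinery you invoke around it is already supplied by theorem \ref{main_thm} and corollary \ref{cor_free}. Moreover, the route you sketch for that step would not close the gap even if carried out. Computing the action of the Milnor primitives on the bottom class only shows that the augmentation ideal of $\mathcal{E}^{\star}(2)$ is \emph{contained} in the annihilator of that class; it gives at best a surjection from $\ste^{\star}/\mathcal{E}^{\star}(2)$ onto the cyclic submodule generated by the unit, and neither the cyclicity of $H\mf^{\star}_{\Z/2}k\R$ nor the injectivity of that surjection follows from a bottom-class computation. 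One still needs a counting argument or an independent determination of $H\mf^{\star}_{\Z/2}k\R$, which is precisely the input you acknowledge you do not have.

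The paper fills this gap by a two-step Bockstein argument that never requires knowing the comodule structure in advance. The cofibre sequence $H\mz \stackrel{2}{\rightarrow} H\mz \rightarrow H\mf$ gives an exact couple relating $\ste^{\star}$ to $H\mf^{\star}_{\Z/2}(H\mz)$; freeness of $\ste^{\star}$ over $\mathcal{E}(1)^{\star}$ forces the associated spectral sequence to collapse and yields $H\mf^{\star}_{\Z/2}(H\mz) \cong \ste^{\star}/\mathcal{E}(1)^{\star}$. The cofibre sequence for multiplication by the Real Bott class $v_1$ on $k\R$, with cofibre $H\mz$, gives a second exact couple, which collapses by freeness of $\ste^{\star}$ over $\mathcal{E}(2)^{\star}$ and yields the statement. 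This is the exact equivariant analogue of the classical two-step computation of $H\F^*(ku)$ through $H\F^*(H\Z)$, and it is where the homotopy-theoretic information about $k\R$ (its Postnikov-type decomposition via $v_1$) actually enters; this is the concrete replacement for the ``cell or Postnikov description'' you were missing. Your closing heuristic that the index $2$ reflects a ``doubling inherent in the Real grading'' is also not what happens: it comes from the paper's indexing of $\mathcal{E}(n)_{\star}$ (exterior on $\tau_0,\dots,\tau_n$) together with the fact that two Bockstein differentials, one for $2$ and one for $v_1$, have been absorbed.
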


\begin{proof}
The proof is similar to the computation of $H\F^*(ku)$, in two step. 
\begin{enumerate}
\item First compute $H\F^*(H\Z)$ by the Bockstein spectral sequence associated to the exact couple $$ \ste^{\star} \rightarrow H\mf^{\star}_{\Z/2}(H\underline{\Z}) \stackrel{2}{\rightarrow} H\mf^{\star}_{\Z/2}(H\underline{\Z}) $$
which collapses by freeness of $\ste^{\star}$ over $\mathcal{E}(1)^{ \star}$, giving $H\mf^{\star}_{\Z/2}(H\underline{\Z}) \cong \ste^{\star} / \mathcal{E}(1)^{\star}$.
\item Then the Bockstein spectral sequence associated to the exact couple $$ H\mf^{\star}_{\Z/2}(H\underline{\Z}) \rightarrow H\mf^{\star}_{\Z/2}(k\R) \stackrel{v_1}{\rightarrow} H\mf^{\star}_{\Z/2}(k\R) $$
collapses by freeness of $\ste^{\star}$ over $\mathcal{E}(2)^{ \star}$. The result follows.
\end{enumerate}
\end{proof}

\begin{rk}
Another use of Theorem \ref{thm_cofree} is to provide change of rings isomorphisms for the $E^2$ page of the $\Z/2$-equivariant Adams spectral sequence (cf \cite[Corollary 6.47]{HK01}) which converges to cohomology with respect to spectra whose homology is of the form $\ste_{\star} // \mathcal{B}$, for $\mathcal{B} = \mathcal{A}(n)_{\star}$ or $\mathcal{E}(n)_{\star}$.
\end{rk}

\bibliographystyle{acm}
\bibliography{biblio}

\end{document}